\newtheorem{theorem}{Theorem}[section]
\newtheorem{lemma}[theorem]{Lemma}
\newtheorem{fact}[theorem]{Fact}
\newtheorem{proposition}[theorem]{Proposition}
\newtheorem{corollary}[theorem]{Corollary}
\newtheorem{em-example}[deff]{Example}
\newtheorem{problem}{Problem}
\theoremstyle{definition}
\newtheorem{definition}[theorem]{Definition}
\newtheorem{remark}[theorem]{Remark}
\newenvironment{example}{\begin{em-example} \em }{ \end{em-example}}
\begin{document}

\vspace{0.5in}

\newcommand{\abs}[1]{\lvert#1\rvert}
\def\norm#1{\left\Vert#1\right\Vert}
\def\Q {{\Bbb Q}}
\def\I {{\Bbb I}}
\def\C {{\Bbb C}}
\def\N{{\Bbb N}}
\def\R{{\Bbb R}}
\def\di{{\mathrm{di}}}
\def\Z {{\Bbb Z}}
\def\U{{\Bbb U}}
\def\F{{\mathrm{E}}}
\def\Un{{\mathcal{U}}}
\def\Is{{\mathrm{Is}}\,}
\def\Aut{{\mathrm {Aut}}\,}
\def\supp{{\mathrm {supp}}\,}
\def\Homeo{{\mathrm{Homeo}}\,}
\def\gr{{\underline{\Box}}}
\def\diam{{\mathrm{diam}}\,}
\def\d{{\mathrm{dist}}}
\def\H{{\mathcal H}}
\def\me{{\mathrm{me}}}
\def\mO{{\overline{o}}}

\def\a{\alpha}
\def\d{\delta}
\def\D{\Delta}
\def\g{\gamma}
\def\s{\sigma}
\def\Si{\Sigma}
\def\implies{\Rightarrow}
\def\o{\omega}
\def\O{\Omega}
\def\G{\Gamma}

\def\sB{{\mathcal B}}
\def\sC{{\mathcal C}}
\def\sE{{\mathcal E}}
\def\sF{{\mathcal F}}
\def\sG{{\mathcal G}}
\def\sH{{\mathcal H}}
\def\sJ{{\mathcal J}}
\def\sK{{\mathcal K}}
\def\sL{{\mathcal L}}
\def\sM{{\mathcal M}}
\def\sN{{\mathcal N}}
\def\sO{{\mathcal O}}
\def\sP{{\mathcal P}}
\def\sR{{\mathcal R}}
\def\sS{{\mathcal S}}
\def\sT{{\mathcal T}}
\def\sU{{\mathcal U}}
\def\sV{{\mathcal V}}

\def\sbs{\subset}
\def\rar{\rightarrow}
\def\e{\epsilon}

\def\ti{\times}
\def\obr{^{-1}}
\def\stm{\setminus}
\def\newline{\hfill\break}

\def\Exp{{\mathrm{Exp}}\,}
\def\Iso{{\mathrm{Iso}}\,}
\def\Sym{{\mathrm{Sym}}\,}


\title[Asplund spaces and the finest locally convex topology]
{Asplund locally convex spaces and the finest locally convex topology}
\author[J. K\c{a}kol and A. Leiderman]
{
Jerzy K\c{a}kol
\orcidlink{0000-0002-8311-2117}
and Arkady Leiderman 
\orcidlink{0000-0002-2257-1635}
}
\address{Faculty of Mathematics and Informatics, A. Mickiewicz University,
61-614 Pozna\'{n}, Poland and Institute of Mathematics Czech Academy of Sciences, Prague, Czech Republic}
\email{kakol@amu.edu.pl}

\address{Department of Mathematics, Ben-Gurion University of the Negev, Beer Sheva, P.O.B. 653, Israel}
\email{arkady@math.bgu.ac.il}
\keywords{Asplund spaces, Weak Asplund spaces, Fr\'echet spaces, free locally convex spaces, strict projective limits}
\subjclass[2010]{Primary 46A04, 46A13, Secondary 54E52, 26B05}

\begin{abstract}
In paper \cite{Kakol-Leiderman-2} we systematized several known equivalent definitions of Fr\'echet (G\^ ateaux) Differentiability Spaces and Asplund (weak Asplund) Spaces.  As an application, we extended the classical Mazur's theorem, and also proved that the product of any family of Banach spaces $(E_{\alpha})$ is an Asplund lcs if and only if each $E_{\alpha}$ is Asplund. The actual  work continues this line of research in the frame of locally convex spaces, including  the classes of Fr\'echet spaces (i.e. metrizable and complete locally convex spaces) and  projective limits, quojections, $(LB)$-spaces and $(LF)$-spaces, as well as, the class of free locally convex spaces $L(X)$ over Tychonoff spaces $X$.

 First we prove some "negative" results:
We show that for every infinite Tychonoff space $X$ the space $L(X)$ is not even a G\^ ateaux Differentiability Space (GDS in short) and contains no infinite-dimensional Baire vector subspaces. 
 On the other hand, we show that all barrelled GDS spaces are quasi-Baire spaces, what implies  that  strict $(LF)$-spaces  are not GDS.  This fact refers, for example, to concrete  important spaces 
 $D^{m}(\Omega)$,  $D(\Omega)$, $D(\mathbb{R}^{\omega})$.

 A special role  of the space $\varphi$, i.e. an $\aleph_{0}$-dimensional vector space equipped with the finest locally convex topology, in this line of research has been distinguished and analysed.  It  seems that little is known about  the Asplund property for Fr\'echet spaces. We show however that a quojection $E$,
 i.e. a Fr\'echet space which is a strict projective limit of the corresponding Banach spaces $E_n$,  is an Asplund (weak Asplund) space if and only if each
  Banach space $E_n$ is Asplund (weak Asplund). In particular, every reflexive quojection is Asplund. 
 Some applications and several illustrating examples are provided.

\end{abstract}

\maketitle
\section{Introduction}\label{s:intro}
\bigskip
All topological spaces $X$ are assumed to be Tychonoff and all topological vector spaces  are Hausdorff.
All vector spaces are considered over the field $\mathbb R$ of real numbers.
The abbreviation  lcs means a locally convex space. For a lcs $E$ by $E_w$ and $E'_{w^{*}}$ we mean $E$ endowed with its weak topology $w=\sigma(E,E')$ and the dual $E'$ equipped with the weak$^{*}$ topology $w^{*}=\sigma(E',E)$, respectively.  By $C_p(X)$ and $C_k(X)$ we denote the space of continuous real-valued functions on $X$ endowed with the pointwise  and the compact-open topology, respectively. If $X$ is a compact space, we denote the space $C_k(X)$ by $C(X)$.

Let us recall that, as Phelps mentioned in his  book \cite{Phelps},
the first infinite-dimensional result about differentiability properties of continuous convex functions is due to  Mazur \cite{Mazur}:
{\em A continuous convex function $f:D \to \R$ defined on an open convex subset $D$ of a separable Banach space $E$, is G\^ ateaux differentiable on a dense $G_{\delta}$ subset of $D$.}

Asplund \cite{Asplund} extended Mazur's theorem in two directions. He showed that the same statement is valid for a more general class of Banach spaces;
and also he introduced a more restricted class of Banach spaces (now called Asplund spaces) in which a
stronger conclusion of Fr\'echet differentiability holds.

Similarly to Banach spaces, lcs can be classified according
to the differentiability properties of the specified class of real-valued continuous convex functions.
 We follow the abbreviations suggested in \cite{Sharp}, see also \cite{Asplund}, \cite{Fabian}, \cite{Fabian1}, \cite{Eyland_Sharp1} and  references.

\begin{definition}\label{def3}
A lcs $E$ is called Asplund (weak Asplund) if every continuous convex function $f: D \to \R$, where $D\subset E$ is a nonempty open and convex set,
is Fr\'echet (G\^ ateaux, respectively) differentiable on a dense $G_{\delta}$ subset of $D$.
Asplund (weak Asplund) spaces are abbreviated by ASP (WASP, respectively).
\end{definition}

\begin{definition}\label{def4}
A lcs $E$ is called Fr\'echet (G\^ ateaux) Differentiability Space if every continuous convex function $f: D \to \R$, where  $D\subset E$ is a nonempty open and convex set,
is Fr\'echet (G\^ ateaux, respectively) differentiable on a dense subset of $D$.
Fr\'echet (G\^ ateaux) differentiable spaces are abbreviated by FDS (GDS, respectively).
\end{definition}

It is known that for  Banach spaces the points of  Fr\'echet differentiability of any continuous convex function form a $G_{\delta}$ set in its domain; hence 
the classes of Banach ASP and Banach FDS spaces coincide.
For a Banach space $C(K)$, where $K$ is the {\it double arrows} compact space $K$,
the set of G\^ ateaux differentiability points of the $\sup$-norm is dense, but it does not contain a dense $G_{\delta}$ set (see \cite{Coban}).
There is an example of a Banach space that is GDS but not weak Asplund \cite{Moors-2}.

The class of Banach spaces which are weak Asplund  contains for instance all Weakly Compactly Generated spaces (WCG spaces, in short).
Moreover,  if a Banach space $E$ is GDS, then the  closed unit ball in $E'$ is $w^{*}$-sequentially compact, see \cite[Theorem 2.1.2]{Fabian}.
A detailed study of weak Asplund Banach spaces and their subclasses is presented in \cite{Fabian}.

Recall that in the class of Banach spaces  finite products, continuous linear images, and closed subspaces of  Asplund spaces are Asplund. 
Every closed subspace of a WCG Banach space is weak Asplund (see \cite{Fabian}).
The largest known subclass of Banach Asplund spaces with reasonable stability
properties was introduced by Stegall \cite{Stegall1}.

The classification of lcs according to the dense or generic differentiability
of convex functions continues the works of Asplund \cite{Asplund}, Larman and Phelps \cite{Larman} and Namioka and Phelps \cite{Namioka}.
A systematic research of lcs that are ASP, WASP, FDS, and GDS was originated in 1990 by  Sharp \cite{Sharp}, 
and then it was continued  in joint papers \cite{Eyland_Sharp1}, \cite{Eyland_Sharp2}, \cite{Eyland_Sharp3}.

Any Banach space $E$, as any completely metrizable space, is {\it Baire}, i.e. the intersection of every countable sequence of dense open subsets in $E$ is dense.
Among other results,  Sharp \cite{Sharp} obtained a generalization of the Mazur's Theorem.

\begin{theorem}\label{theorem_sharp}\cite[Theorem 2.1]{Sharp}
 All separable Baire topological linear spaces (not necessarily lcs) are WASP.
\end{theorem}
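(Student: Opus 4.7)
The plan is to follow the Mazur/Asplund blueprint adapted to a separable Baire TVS. Fix a continuous convex $f\colon D\to\R$ with $D$ nonempty, open, and convex. By convexity along lines the one-sided directional derivative
\[
f'_+(x;v)=\lim_{t\to 0^+}\frac{f(x+tv)-f(x)}{t}
\]
exists for every $x\in D$ and $v\in E$, and the nonnegative quantity
\[
g_v(x):=f'_+(x;v)+f'_+(x;-v)
\]
measures the failure of a symmetric derivative in direction $v$. A standard convexity computation shows that the symmetric second difference $h(x,t,v):=(f(x+tv)+f(x-tv)-2f(x))/t$ is nondecreasing in $t>0$ and decreases to $g_v(x)$ as $t\to 0^+$.

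Using separability I would fix a countable dense set $\{v_k\}_{k\in\N}$ in $E$ and, for $k,n\in\N$, set
\[
U_{k,n}:=\bigcup_{m\in\N}\bigl\{x\in D:h(x,1/m,v_k)<1/n\bigr\}=\{x\in D:g_{v_k}(x)<1/n\}.
\]
Each set in the union is open by continuity of $f$, so $U_{k,n}$ is open in $D$. To show density, take any $x_0\in D$ and restrict $f$ to the line through $x_0$ in direction $v_k$: the real convex function $\phi(t):=f(x_0+tv_k)$ is differentiable off a countable set of parameters, and at any differentiability point $t$ one has $g_{v_k}(x_0+tv_k)=\phi'_+(t)-\phi'_-(t)=0$, so arbitrarily small $t$ place such a point in any prescribed neighbourhood of $x_0$. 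The Baire property of $E$ then yields that $G:=\bigcap_{k,n}U_{k,n}$ is a dense $G_\delta$ subset of $D$ on which $g_{v_k}(x)=0$ for every $k$.

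It remains to promote this to Gâteaux differentiability at every $x\in G$. The sublinear functional $v\mapsto f'_+(x;v)$ is bounded above on a small balanced $0$-neighbourhood via $f'_+(x;v)\leq f(x+v)-f(x)$ together with continuity of $f$; a standard sublinear estimate then yields continuity of $f'_+(x;\cdot)$ on all of $E$. Consequently $v\mapsto g_v(x)$ is a continuous nonnegative function vanishing on the dense set $\{v_k\}$, hence identically zero, so the two-sided derivative $f'(x;v)$ exists in every direction. A short bookkeeping argument using sublinearity of $f'_+(x;\cdot)$ and the identity $f'(x;-v)=-f'(x;v)$ shows that $v\mapsto f'(x;v)$ is simultaneously subadditive and superadditive, hence a continuous linear functional on $E$; thus $f$ is Gâteaux differentiable at $x$.

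The step I expect to require the most care is the continuity of the sublinear functional $f'_+(x;\cdot)$ in the possibly non-locally-convex setting, since the usual Hahn--Banach/subdifferential machinery available in lcs is not at our disposal. This continuity is precisely what lets us bootstrap the countable condition $g_{v_k}(x)=0$ on a dense family of directions to the global condition $g_v(x)=0$ needed to extract a genuine Gâteaux derivative within the TVS framework.
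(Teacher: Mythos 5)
Your argument is correct and is essentially the classical Mazur-type proof that Sharp's Theorem 2.1 itself uses; the paper only cites \cite[Theorem 2.1]{Sharp} and gives no proof of its own, so there is nothing to diverge from. The one point worth making explicit is that $h(x,1/m,v_k)$ is only defined on the open set $\{x\in D: x\pm(1/m)v_k\in D\}$, but since every $x\in D$ lies in this set for all large $m$ and $h$ is monotone in $t$, your identification $U_{k,n}=\{x\in D: g_{v_k}(x)<1/n\}$ and its openness go through unchanged, and your continuity argument for the sublinear functional $f'_+(x;\cdot)$ correctly avoids any appeal to local convexity.
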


Several authors much later rediscovered this result without any proper citation of the original publication \cite{Sharp};
the list of such papers includes \cite{Corbacho}, \cite{Zheng}, \cite{Lee}.
It appears that other important works \cite{Eyland_Sharp1}, \cite{Eyland_Sharp2}, \cite{Eyland_Sharp3} have also not received due attention in the later publications.

Very recently, in paper \cite{Kakol-Leiderman-2} we  systematized several known equivalent definitions of Fr\'echet (G\^ ateaux) differentiability spaces and Asplund (weak Asplund) spaces.
In fact we observed in \cite[Proposition 1.9]{Kakol-Leiderman-2} the following

\begin{proposition}\label{prop_eq4} Let $E$ be a Baire lcs.\hfill
\begin{enumerate}
\item[{\rm (1)}] $E$ is WASP if and only if every continuous convex function $f: E \to \R$ is G\^ ateaux differentiable on a dense $G_{\delta}$ subset of $E$.
\item[{\rm (2)}] $E$ is ASP if and only if every continuous convex function $f: E \to \R$ is Fr\'echet differentiable on a dense $G_{\delta}$ subset of $E$.
\end{enumerate}
\end{proposition}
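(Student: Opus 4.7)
The forward implications $(\Rightarrow)$ in both (1) and (2) are immediate by specialising $D = E$ in Definition \ref{def3}. I focus on the converses; I describe the G\^ateaux case (1), the Fr\'echet case (2) being identical with ``G\^ateaux'' replaced by ``Fr\'echet'' throughout.

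Assume every continuous convex $f: E \to \R$ is G\^ateaux differentiable on a dense $G_\delta$ subset of $E$. Let $g: D \to \R$ be continuous convex on an open convex $D \subset E$, and write $A \subset D$ for the set of G\^ateaux differentiability points of $g$. The plan is a \emph{local extension} argument. For each $x_0 \in D$ choose an absolutely convex open $U \subset E$ with $x_0 + 2\bar U \subset D$; since $g$ is continuous and convex on $D$, it is bounded on $x_0 + 2\bar U$, and the family $\Lambda_{x_0}$ of supporting continuous affine functionals of $g$ at points of $x_0 + \bar U$ is equicontinuous (as continuous convex functions on a lcs are locally equicontinuously subdifferentiable). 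Setting
\[
\tilde g_{x_0}(x) := \sup_{\ell \in \Lambda_{x_0}} \ell(x), \qquad x \in E,
\]
equicontinuity and local boundedness yield a continuous convex function on all of $E$, and the subdifferential identity forces $\tilde g_{x_0} \equiv g$ on the open neighbourhood $W_{x_0} := x_0 + U$ of $x_0$.

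By the hypothesis, the set $S_{x_0} \subset E$ of G\^ateaux differentiability points of $\tilde g_{x_0}$ is dense $G_\delta$ in $E$. Since G\^ateaux differentiability is a local property and $\tilde g_{x_0} = g$ on $W_{x_0}$, one has $A \cap W_{x_0} = S_{x_0} \cap W_{x_0}$, a dense $G_\delta$ subset of $W_{x_0}$. Density of $A$ in $D$ then follows at once, as any nonempty open subset of $D$ contains some $W_{x_0}$.

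The step I expect to be the main obstacle is the \emph{globalisation of the $G_\delta$ property}: writing $A = \bigcup_{x_0 \in D} (A \cap W_{x_0})$ presents $A$ as an uncountable union of local $G_\delta$'s, which need not itself be $G_\delta$. Here the Baire hypothesis on $E$, inherited by the open subspace $D$, is essential. I anticipate the step to be carried out either by (i) reducing the open cover $\{W_{x_0}\}$ to a countable subcover adapted to the extensions $\tilde g_{x_0}$, thereby expressing $A$ as a countable intersection of open subsets of $D$, or (ii) using Baire-category reasoning to produce a single global recipe for $A$ in terms of the family $\{\tilde g_{x_0}\}$ that inherits $G_\delta$-ness uniformly. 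This local-to-global passage is the technical heart of the converse; everything else is routine.
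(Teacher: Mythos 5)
The paper does not actually prove this proposition in the text you were given --- it is quoted verbatim from \cite[Proposition 1.9]{Kakol-Leiderman-2} --- so there is no in-text proof to compare against; I assess your argument on its own terms. The local-extension construction is sound: continuity of $g$ at $x_0$ plus convexity gives boundedness of $g$ above and below on $x_0+2\bar U$, the linear parts of the supporting functionals at points of $x_0+\bar U$ are then uniformly bounded on $U$ (hence equicontinuous), the supremum $\tilde g_{x_0}$ is finite on all of $E$ because $U$ is absorbing and is continuous because it is bounded above on the open set $x_0+U$, and the global subgradient inequality forces $\tilde g_{x_0}=g$ on $x_0+U$. Density of the differentiability set $A$ in $D$ follows exactly as you say.

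However, the step you defer is precisely where the proof lives, and as written there is a gap. Your route (i) --- extracting a countable subcover of $\{W_{x_0}\}$ --- is unavailable in a general (non-Lindel\"of) lcs, and even granted one, a countable union of $G_{\delta}$ sets need not be $G_{\delta}$; indeed the full set $A$ of G\^ateaux differentiability points genuinely need not be $G_{\delta}$ (the $\sup$-norm on $C(K)$ for the double-arrow space $K$, recalled in Section \ref{s:intro}). What rescues the statement is that Definition \ref{def3} only asks that $f$ be differentiable \emph{on} a dense $G_{\delta}$ set, i.e.\ that $A$ \emph{contain} one, and this is delivered by the Banach category (localization) theorem: $D\setminus A$ is meager in each member of the open cover $\{W_{x_0}\}$ of $D$, because $A\cap W_{x_0}=S_{x_0}\cap W_{x_0}$ with $S_{x_0}$ a dense $G_{\delta}$ in the Baire space $E$; hence $D\setminus A$ is meager in $D$, and since $D$ is an open, hence Baire, subspace of $E$, the comeager set $A$ contains a dense $G_{\delta}$ subset of $D$. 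With that one classical theorem inserted, your argument closes (identically for the Fr\'echet case); without it, the local-to-global passage remains unproved.
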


As an application we extended Sharp's Theorem \ref{theorem_sharp}, see \cite[Theorem 2.2]{Kakol-Leiderman-2}.

\begin{theorem}[K\c akol-Leiderman]\label{pro-2}
Let $E$ be a separable Baire lcs and let $Y$ be the product $\prod_{\alpha\in A} E_{\alpha}$
of any family of separable Fr\'echet spaces. Then the product $E \times Y$ is weak Asplund.
\end{theorem}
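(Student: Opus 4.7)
My plan is to reduce the assertion to Sharp's Theorem \ref{theorem_sharp} by exploiting the fact that any continuous convex function on a product of locally convex spaces is, locally, a function of only finitely many coordinates. Fix a continuous convex $f : D \to \R$ on a non-empty open convex $D \subseteq E \times Y$; I will exhibit a dense $G_\delta$ subset of $D$ on which $f$ is G\^ateaux differentiable.

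Given any point $p \in D$, continuity of $f$ yields a basic convex open neighborhood $U \subseteq D$ of $p$ of the form $U = p + \bigl(W_E \times \prod_{\alpha \in F} W_\alpha \times \prod_{\alpha \notin F} E_\alpha\bigr)$, with $F \subseteq A$ finite and $f$ bounded on $U$. For any $x \in U$ and any vector $w$ whose $E$- and $F$-coordinates all vanish, the whole line $x + \R w$ lies in $U$, so $f$ is bounded above on it; the classical fact that a convex function bounded above on an affine line is constant then gives $f(x + tw) = f(x)$ for all $t \in \R$. Hence $f|_U = g \circ \pi_F$, where $\pi_F : E \times Y \to E \times \prod_{\alpha \in F} E_\alpha$ is the canonical projection and $g$ is continuous convex on the convex open set $\pi_F(U)$. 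The reduced space $E \times \prod_{\alpha \in F} E_\alpha$ is separable (as the product of two separable spaces) and Baire (as the product of a Baire space with the completely metrizable Fr\'echet space $\prod_{\alpha \in F} E_\alpha$), so it is a separable Baire lcs. By Sharp's Theorem \ref{theorem_sharp} it is WASP, which yields a dense $G_\delta$ subset $D_g \subseteq \pi_F(U)$ on which $g$ is G\^ateaux differentiable. Since $f$ is constant in the $\prod_{\alpha \notin F} E_\alpha$-directions on $U$, a short computation gives that $f$ is G\^ateaux differentiable at every point of $\pi_F^{-1}(D_g) \cap U$, a dense $G_\delta$ subset of $U$.

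To globalize, use Zorn's Lemma to pick a maximal pairwise disjoint family $\mathcal{U}$ of open subsets of $D$ each enjoying the local finite-support property above. Maximality forces $U^{*} := \bigcup \mathcal{U}$ to be dense in $D$, since otherwise a new good neighborhood could be extracted inside $D \setminus \overline{U^{*}}$. Pairwise disjointness of $\mathcal{U}$ means that if $D_U = \bigcap_n O_{U,n} \subseteq U$ denotes the local dense $G_\delta$ of G\^ateaux differentiability points (with $O_{U,n}$ open dense in $U$), then $\bigcup_{U \in \mathcal{U}} D_U = \bigcap_n \bigcup_{U \in \mathcal{U}} O_{U,n}$, realizing it as a $G_\delta$ in $U^{*}$; density in $U^{*}$ is immediate since each $\bigcup_{U} O_{U,n}$ meets every non-empty open subset of $U^{*}$. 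As $U^{*}$ is open and dense in $D$, this produces a dense $G_\delta$ subset of $D$ of G\^ateaux differentiability points of $f$, so $E \times Y$ is WASP.

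The main obstacle is that $Y$, and hence $E \times Y$, is typically neither separable nor Baire once the index set $A$ is uncountable, which blocks any direct appeal to Sharp's theorem. The local finite-coordinate support of $f$ (an immediate consequence of bounded-above convex functions on a line being constant) is what allows $Y$ to be replaced by a finite subproduct on each local patch, and the maximal-disjoint-family construction is what is needed to promote the locally produced dense $G_\delta$ sets into a single global dense $G_\delta$.
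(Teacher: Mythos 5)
Your argument is essentially correct, and there is nothing in this paper to compare it against line by line: Theorem \ref{pro-2} is stated here with a pointer to \cite[Theorem 2.2]{Kakol-Leiderman-2} and no proof is reproduced. Your route --- observe that a continuous convex function on an open subset of a product is locally bounded on a basic box, hence constant along every line in the directions outside a finite set $F$ of coordinates (a convex function on $\R$ bounded above is constant), so it factors locally through $\pi_F$; apply Sharp's Theorem \ref{theorem_sharp} to the separable Baire factor $E\times\prod_{\alpha\in F}E_\alpha$; then glue the local dense $G_\delta$ sets over a maximal disjoint family of good patches --- is the natural one and all the main steps check out, including the identity $\bigcup_U D_U=\bigcap_n\bigcup_U O_{U,n}$ for pairwise disjoint $U$ and the fact that a dense $G_\delta$ of the dense open set $U^{*}$ is a dense $G_\delta$ of $D$.

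Two points deserve more care. First, "Baire times completely metrizable is Baire" is true but not formal nonsense --- products of Baire spaces need not be Baire --- so you should invoke Oxtoby's theorem (the product of a Baire space with an $\alpha$-favourable, e.g.\ completely metrizable, space is Baire), or alternatively his result for a Baire factor with countable $\pi$-base, which applies since $\prod_{\alpha\in F}E_\alpha$ is second countable. Second, your justification of the density of $\bigcup_U D_U$ in $U^{*}$ ("each $\bigcup_U O_{U,n}$ meets every non-empty open subset of $U^{*}$") proves only that each of the countably many open sets is dense, and a countable intersection of dense open sets is dense only in a Baire space --- which is exactly what you cannot assume globally. The correct one-line argument is direct: every non-empty open $V\subseteq U^{*}$ meets some $U\in\mathcal{U}$, and $D_U$ is dense in $U$, so $V\cap\bigcup_U D_U\neq\emptyset$. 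With these two repairs the proof is complete.
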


Also, we proved that the product of any family of Banach spaces $(E_{\alpha})$ is an Asplund lcs if and only if each $E_{\alpha}$ is Asplund, see Theorem \ref{product}.

Let us recall  the following remarkable  results describing  Banach spaces which are Asplund; for other  references and results  see \cite{Fabian1}.
 \begin{theorem}[Deville--Godefroy--Zizler, Asplund, Namioka--Phelps]\label{separable-dual}
For a  Banach space $E$ the following assertions are equivalent:
\begin{enumerate}
\item $E$ is an Asplund space.
\item Every separable Banach subspace of $E$ has separable dual.
\item Every closed subspace of $E$ is Asplund.
\item Every equivalent norm on $E$ is Fr\'echet differentiable at some point of $E$.
\item The dual of $E$ has the Radon-Nikodym Property.
\end{enumerate}
\end{theorem}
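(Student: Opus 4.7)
The plan is to prove the five equivalences via the cycle (1) $\implies$ (3) $\implies$ (2) $\implies$ (1), combined with the separately established equivalences (1) $\Leftrightarrow$ (5) and (1) $\Leftrightarrow$ (4); the cycle handles the hereditary content of the theorem, while (5) encodes the duality with the Radon--Nikodym Property and (4) encodes the renorming characterization.

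I would first dispose of (1) $\implies$ (4): an equivalent norm $p$ on $E$ is a continuous convex function $p:E\to\R$, so by Definition \ref{def3} its set of Fr\'echet points is a dense $G_\delta$, hence certainly nonempty. For (1) $\implies$ (3), given a closed subspace $F\subset E$ and a continuous convex $f:D\to\R$ with $D\subset F$ open convex, I would, for each $x_0\in D$, form the inf-convolution extension $\tilde f(x)=\inf_{y\in D'}\bigl(f(y)+L\Vert x-y\Vert\bigr)$ on an $E$-neighbourhood of $x_0$, where $L$ is a local Lipschitz constant of $f$ near $x_0$ and $D'\subset D$ is a small convex neighbourhood; $\tilde f$ is continuous convex and agrees with $f$ on a neighbourhood of $x_0$ in $F$, so Fr\'echet points of $\tilde f$ landing in $F$ are Fr\'echet points of $f$, and Asplundness of $E$ yields a dense $G_\delta$ of such points.

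For (3) $\implies$ (2), any separable Banach subspace $S\subset E$ is Asplund by (3), and one invokes the classical Asplund--Namioka--Phelps theorem that a \emph{separable} Banach space is Asplund if and only if its dual is norm-separable. For (2) $\implies$ (1) I would run the standard separable reduction: to exhibit Fr\'echet points of $f:D\to\R$ arbitrarily close to a given $x_0\in D$, I would inductively build a separable closed subspace $F\ni x_0$ by a stabilization-by-subgradients construction, so that Fr\'echet differentiability of $f|_F$ lifts to Fr\'echet differentiability of $f$; by (2), $F'$ is separable, hence $F$ is Asplund, and the lifting gives the required density on $D$.

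For (1) $\Leftrightarrow$ (5) the Namioka--Phelps--Stegall argument is to observe that Fr\'echet differentiability of a convex function at $x$ is equivalent to the existence of weak$^*$-slices of its subdifferential at $x$ of arbitrarily small norm-diameter, and this is exactly the slicing characterization of RNP for $E'$. The deepest step is (4) $\implies$ (1), the Deville--Godefroy--Zizler theorem, which is proved in contrapositive: if $E$ is not Asplund one constructs an equivalent norm on $E$ that is Fr\'echet differentiable \emph{nowhere}, by averaging a witness convex function whose Fr\'echet-points fail to be dense with a locally uniformly rotund renorming of $E$ furnished by the duality with RNP. The \emph{main obstacle} is precisely this last step, which requires the full Deville--Godefroy--Zizler renorming machinery; a secondary delicate point is the subgradient bookkeeping in the separable reduction (2) $\implies$ (1), where one must ensure that the enlarging construction of $F$ stabilizes in countably many steps while capturing enough functionals to transfer Fr\'echet differentiability back to $E$.
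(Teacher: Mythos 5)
First, note that the paper offers no proof of Theorem \ref{separable-dual}: it is quoted as a classical result with a pointer to \cite{Fabian1} (and implicitly to \cite{Asplund} and \cite{Namioka}), so there is no internal argument to compare yours against; what follows assesses your outline against the standard literature proofs. Your overall architecture --- the cycle $(1)\Rightarrow(3)\Rightarrow(2)\Rightarrow(1)$ together with the separate equivalences with (4) and (5) --- is essentially the classical one, and the steps $(1)\Rightarrow(4)$, $(3)\Rightarrow(2)$ (via the separable Asplund--Namioka--Phelps theorem), $(2)\Rightarrow(1)$ (separable reduction), and $(1)\Leftrightarrow(5)$ (small-diameter weak$^{*}$-slices of the subdifferential versus dentability of bounded subsets of $E'$) are correctly identified, if only sketched.

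The genuine gap is in your $(1)\Rightarrow(3)$. The inf-convolution $\tilde f$ does extend $f$ to a continuous convex function on an open subset of $E$, and a point of $F$ at which $\tilde f$ is Fr\'echet differentiable on $E$ is indeed a point of Fr\'echet differentiability of $f|_F$. But Asplundness of $E$ only gives a dense $G_{\delta}$ set of Fr\'echet points in an open subset of $E$, and a proper closed subspace $F$ is nowhere dense in $E$; there is no reason for that $G_{\delta}$ set to meet $F$ at all, let alone densely in $D$. So ``Fr\'echet points of $\tilde f$ landing in $F$'' may be empty, and the conclusion does not follow. The standard repair is to reverse the logical order: prove $(1)\Leftrightarrow(2)$ first, and then observe that condition (2) is manifestly inherited by closed subspaces, which yields (3) for free. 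A secondary inaccuracy concerns $(4)\Rightarrow(1)$: there is no locally uniformly rotund renorming ``furnished by the duality with RNP'' --- non-Asplund means precisely that $E'$ \emph{fails} RNP --- and the actual construction starts from a separable subspace with nonseparable dual and manufactures, from a dyadic tree (bush) in the dual ball, an equivalent norm on $E$ that is nowhere Fr\'echet differentiable. You correctly flag this as the deepest step, but the mechanism you describe would not produce it.
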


For Banach spaces $C(X)$ we note the following  result due to Namioka and Phelps \cite{Namioka}. Recall first that a topological space $X$ is called {\it scattered} if every non-empty subset $A$ of $X$ has an isolated point in $A$.

\begin{theorem}[Namioka--Phelps]\label{NP}
A compact space $X$ is scattered if and only if  $C(X)$ is Asplund.
\end{theorem}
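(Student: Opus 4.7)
My plan is to prove both directions using Theorem \ref{separable-dual}(2), the characterization of Asplund Banach spaces as those whose every separable closed subspace has separable dual.

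\textbf{Forward implication} (scattered $\Rightarrow$ Asplund). Let $Y$ be a separable closed subspace of $C(X)$ with a countable dense set $\{f_n\}_{n\in\N}$. I would form the diagonal map $\Phi:X\to \R^{\N}$, $\Phi(x)=(f_n(x))_n$. Its image $K=\Phi(X)$ is compact metrizable, and every $f\in Y$ is constant on fibres of $\Phi$ (by density), hence factors as $f=\tilde f\circ\Phi$, giving an isometric embedding $Y\hookrightarrow C(K)$. A standard Cantor--Bendixson argument shows that continuous compact Hausdorff images of scattered compacta are scattered, so $K$ is scattered compact metrizable, hence countable. Then $C(K)$ is separable with separable dual $\ell^1(K)$, so $Y^*$ is separable. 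Theorem \ref{separable-dual}(2) then yields that $C(X)$ is Asplund.

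\textbf{Reverse implication} (non-scattered $\Rightarrow$ non-Asplund). I would extract the non-empty closed perfect kernel $P$ of $X$ (the intersection of all Cantor--Bendixson derivatives) and produce a continuous surjection $X\to[0,1]$. First, I would inductively construct, for each finite binary string $s$, a non-empty open $U_s\sbs P$ such that $\overline{U_{s0}}$ and $\overline{U_{s1}}$ are disjoint and contained in $U_s$: perfectness supplies two distinct points in any non-empty open subset of $P$, and compact Hausdorff (hence normal) structure allows disjoint open neighbourhoods of these points to be shrunk to have disjoint closures inside $U_s$. Let $K=\bigcap_n\bigcup_{|s|=n}\overline{U_s}$, a closed subset of $P$; assigning to $x\in K$ the unique $\sigma\in\{0,1\}^{\N}$ with $x\in\bigcap_n \overline{U_{\sigma|n}}$ defines a continuous surjection $q:K\to\{0,1\}^{\N}$, and composing with the standard binary surjection $\{0,1\}^{\N}\to[0,1]$ gives a continuous surjection $K\to[0,1]$. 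Two successive applications of the Tietze extension theorem (from $K$ to $P$, then from $P$ to $X$) produce a continuous surjection $\tilde\phi:X\to[0,1]$. Hence $f\mapsto f\circ\tilde\phi$ embeds $C([0,1])$ isometrically into $C(X)$. Since $C([0,1])$ is separable but its dual $M[0,1]$ contains the uncountable $2$-separated set $\{\delta_x:x\in[0,1]\}$, it has non-separable dual. Theorem \ref{separable-dual}(2) then gives that $C(X)$ is not Asplund.

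\textbf{Main obstacle.} The hardest step is the inductive construction of the tree $\{U_s\}$ and the verification that the assignment $q$ is well-defined, continuous, and surjective. In the absence of a metric on $P$, this must be carried out using only disjoint-closure separation rather than shrinking diameters. Once the resulting purely topological lemma --- that every compact Hausdorff perfect space continuously surjects onto $[0,1]$ --- is in hand, everything else is a routine application of Cantor--Bendixson, Tietze extension, and Theorem \ref{separable-dual}.
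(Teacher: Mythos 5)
The paper does not prove this statement at all: Theorem \ref{NP} is imported as a classical result of Namioka and Phelps with a citation to \cite{Namioka}, so there is no in-paper argument to compare against. Your proof is, however, essentially correct and follows the standard route, reducing both directions to criterion (2) of Theorem \ref{separable-dual}. In the forward direction the factorization of a separable closed subspace $Y$ through the metrizable quotient $K=\Phi(X)$ is sound ($\Phi$ is a closed, hence quotient, map between compacta, so each $\tilde f$ is automatically continuous), and the chain ``$K$ scattered compact metrizable $\Rightarrow$ $K$ countable $\Rightarrow$ $C(K)^*\cong\ell^1(K)$ separable $\Rightarrow$ $Y^*$ separable'' is right; the one ingredient you invoke without proof, that continuous Hausdorff images of scattered compacta are scattered, is true but its usual proof is a Zorn-type argument (take a closed subset of $X$ minimal with respect to mapping onto a putative perfect subset of $K$ and show it would have to be dense-in-itself), not really a Cantor--Bendixson computation. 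In the reverse direction the Cantor scheme on the perfect kernel, the resulting surjection $K\to\{0,1\}^{\N}\to[0,1]$, and the isometric copy of $C([0,1])$ with nonseparable dual $M[0,1]$ are all correct; note only that a single application of Tietze suffices, since $K$ is already closed in the normal space $X$. So your write-up supplies a complete, self-contained proof of a theorem the paper treats as a black box, which is more than the paper offers but entirely consistent with the classical argument.
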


In the current paper we continue this line of research in the frame of lcs and, in particular,  Fr\'echet lcs, i.e. metrizable and complete lcs. 
Sections \ref{GDS} and \ref{Baire} are devoted to the free locally convex spaces.
Free topological vector spaces were introduced by  Markov \cite{Mar}, without any details.
 Later Raikov constructed the free locally convex space $L(X)$ for a uniform space $X$ \cite{Raikov}.
Free lcs constitute a very important subclass of locally convex spaces.
It suffices to mention that every lcs is a linear continuous quotient of a free lcs
and that every Tychonoff space $X$ embeds as a {\it closed} subspace into $L(X)$.
Also, every $k$-space $X$ admits a homeomorphic embedding of $L(X)$ into the double function space $C_k(C_k(X))$ \cite{Usp08}.

 Motivated by Theorem \ref{weak} below we prove  that for an infinite Tychonoff space $X$ the free locally convex space $L(X)$ is not even a GDS space (although $L(X)_w$  is Asplund), 
see Theorem \ref{Theorem1} and  Corollary \ref{consequence}. We show also that for any $X$, the lcs $L(X)$ contains no infinite-dimensional Fr\'echet locally convex subspace, 
see Corollary \ref{corollary1}.  Hence, every Banach subspace of $L(X)$ has separable dual but $L(X)$ is not GDS provided $X$ is infinite.

In order to prove our Theorem \ref{Theorem1} about $L(X)$, in Section \ref{x} first we show that $\varphi$ is not a GDS space, where $\varphi$ means an $\aleph_0$-dimensional vector space endowed with the finest locally convex topology.  Since the GDS property is inherited by continuous linear surjections, the latter facts apply to show that each infinite-dimensional lcs $E$ containing a complemented copy of the space $\varphi$ is not a GDS space, therefore $E$  is not weak Asplund. Consequently,  the  barrelled lcs which are not quasi-Baire cannot be  GDS in view of  Narayanaswami-Saxon's  \cite[Theorem 1]{Na-Saxon}, see Theorem \ref{complemented} below. Hence  strict $(LF)$-spaces (as containing complemented copies of $\varphi$) are not GDS, see Corollary \ref{LF}.  This fact refers, for example, to concrete  important spaces  $D^{m}(\Omega)$ (strict $(LB)$-space),  $D(\Omega)$, $D(\mathbb{R}^{\omega})$  (strict $(LF)$-spaces), see \cite{Bierstedt} for definitions and an analysis of these spaces.

We will show, as a direct application of the above facts around the space $\varphi$,  that the space $C_k(Q)$ (over the space of rationals $Q$) is an Asplund space (by Theorem \ref{FFF}) which contains  a closed copy of $\varphi$, see Corollary \ref{closed}. This  provides a simple  example showing that Asplund spaces may contain  closed subspaces  which are even not GDS, what  answers in the negative  Problem \ref{P1} (2 and 3)  gathered  in \cite[p. 170]{Fabian}.

It  seems that little is known about  the Asplund property for Fr\'echet lcs, although  it should  be mentioned that every Montel Fr\'echet lcs is an Asplund space \cite[Therorem 3.4]{Sharp}. 
Bearing in mind that each Fr\'echet lcs is the projective limit of a certain sequence of Banach spaces, the question arises about the potential use of Asplund's theory, but in a wider class of Fr\'echet lcs. The Factor Theorem from the Eyland-Sharp \cite[Theorem 2.2]{Eyland_Sharp1} turns out to be an important motivation for studying Asplund's property in the class of Fr\'echet spaces.
Nevertheless, it seems all the more surprising that the mentioned (and used) below  publications of Eyland and Sharp \cite{Eyland_Sharp1}, \cite{Sharp}, have remained rather unnoticed by other researchers.

It  is  unclear if Theorem \ref{separable-dual} remains true for Fr\'echet lcs $E$. We  show however that this is the case if $E$ is a {\it quojection}.
 Recall here that a Fr\'echet lcs $E$  is called a  {\it quojection} if the space $E$ is the strict projective limit of Banach spaces $E_n$ 
 (shortly we denote $E =s-proj_n(E_n, P_n)$), i.e.  each
continuous linear map $P_n : E \rightarrow E_n$ is surjective, see \cite[Definition 8.4.27]{PB}.

In Section \ref{s:2} we prove that a quojection $E$ with its defining sequence of Banach spaces $(E_{n})_n$ is an Asplund (weak Asplund) space if and only if each  Banach space $E_n$ is Asplund (weak Asplund), see Theorem \ref{fifth}.
Moreover, Theorem \ref{seven} asserts that if  $E$ is  a quojection which is Asplund, and  $F$ is a closed vector subspace of $E$ which is a quojection, then $F$ is Asplund.
Several open questions illustrating presented results are provided.
\section{preliminaries}\label{s:1}
\bigskip
The following theorem for any infinite Tychonoff space $X$ is proved in \cite{GKKM}.
It essentially supplements Theorem \ref{FFF} below.
\begin{theorem}[Gabriyelyan-K\c akol-Kubi\'s-Marciszewski]\label{xx}
For a Tychonoff space $X$ the following assertions are equivalent:
\begin{enumerate}
\item Every compact subset of $X$ is scattered.
\item Every Banach subspace of $C_k(X)$ is Asplund.
\item Every separable Banach subspace of $C_k(X)$ has separable dual.
\item The space $C_k(X)$ does not  contain a  copy of $\ell_{1}.$
\end{enumerate}
\end{theorem}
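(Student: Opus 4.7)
I would establish the cycle $(1) \Rightarrow (2) \Rightarrow (3) \Rightarrow (4) \Rightarrow (1)$. The block $(2) \Leftrightarrow (3)$ is immediate from Theorem~\ref{separable-dual}: Asplundness of a Banach space passes to closed subspaces, and a Banach space is Asplund if and only if each of its separable closed subspaces has separable dual. The implication $(3) \Rightarrow (4)$ is also immediate, since any copy of $\ell_1 \subset C_k(X)$ is a separable Banach subspace whose dual $\ell_1' \cong \ell_\infty$ is non-separable.

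For $(4) \Rightarrow (1)$ I argue contrapositively. Suppose some compact $K \subset X$ is not scattered. By Theorem~\ref{NP}, $C(K)$ is not Asplund, and by the classical Pe\l{}czy\'nski-Semadeni theorem $C(K)$ contains an isomorphic copy of $\ell_1$. Compact subsets of a Tychonoff space $X$ are $C^*$-embedded (being compact, they sit in $\beta X$, where they are $C^*$-embedded in the ambient normal space), so each function of the $\ell_1$-basic sequence in $C(K)$ admits a bounded continuous extension to $X$. Multiplying such extensions by a fixed continuous bump function equal to $1$ on $K$ and vanishing outside a preassigned open neighbourhood of $K$ yields $\ell_1$-equivalent functions in $C_k(X)$ whose closed linear span in $C_k(X)$ carries the $\ell_1$-norm topology, producing a copy of $\ell_1$ inside $C_k(X)$.

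The technical core lies in $(1) \Rightarrow (2)$. Given an arbitrary Banach subspace $B \subset C_k(X)$, Theorem~\ref{separable-dual} reduces the task to showing that every separable closed subspace $B_0 \subset B$ has separable dual. The evaluation map $\psi : X \to (B_0', w^*)$, $x \mapsto \delta_x|_{B_0}$, is weak*-continuous, and for every compact $L \subset X$ the image $\psi(L)$ is a weak*-compact scattered subset of $B_0'$, since it is a continuous image of the scattered compact $L$. Combining a Rosenthal $\ell_1$-type dichotomy applied inside each $C(L)$ (which is Asplund under hypothesis $(1)$ by Theorem~\ref{NP}) with the fragmentation characterisation of Asplund Banach spaces, one deduces weak*-norm fragmentation of $B_{B_0'}$, hence $B_0$ is Asplund and $B_0'$ is separable.

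\textbf{Main obstacle.} The hard direction is $(1) \Rightarrow (2)$. The Banach norm on $B_0$ need not be comparable to the sup-norm on any compact subset of $X$ (as witnessed by $\ell_2 \subset C_k(\N)$), so one cannot simply embed $B_0$ into some $C(K)$ with $K \subset X$ compact. Synthesising the local Asplund information about the restrictions $r_L(B_0) \subset C(L)$ for each compact $L \subset X$ into a global structural statement about the dual of $B_0$ --- threading through the weak*-compact scattered images $\psi(L)$ to obtain weak*-norm fragmentation of $B_{B_0'}$ --- is the technical crux of the theorem and requires Banach-space machinery beyond the classical Namioka-Phelps result.
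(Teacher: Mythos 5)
The paper itself gives no proof of this theorem --- it is quoted from \cite{GKKM} --- so I can only judge your argument on its merits. Your cycle $(2)\Leftrightarrow(3)\Rightarrow(4)\Rightarrow(1)$ is fine: the Banach-space reductions via Theorem~\ref{separable-dual} are correct, and your lifting of a copy of $\ell_1$ from $C(K)$ (for $K$ compact non-scattered, via Pe\l{}czy\'nski--Semadeni) into $C_k(X)$ works, since the $C^*$-extensions $\tilde f_n$ with $\|\tilde f_n\|_\infty=\|f_n\|_{C(K)}$ give $p_L\bigl(\sum a_n\tilde f_n\bigr)\le M\|(a_n)\|_1$ for every compact $L$ while $p_K$ dominates a multiple of $\|(a_n)\|_1$, so the closed span is a Banach subspace isomorphic to $\ell_1$. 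The genuine gap is $(1)\Rightarrow(2)$: what you offer there is not a proof but a programme ("combining a Rosenthal $\ell_1$-type dichotomy \dots with the fragmentation characterisation \dots one deduces weak*-norm fragmentation"), and, more importantly, the obstacle you invoke to justify this machinery is illusory. Your witness $\ell_2\subset C_k(\N)$ is not a Banach subspace of $C_k(\N)=\R^{\N}$: the induced topology on it is the topology of coordinatewise convergence, which is neither normable nor complete on $\ell_2$. A \emph{Banach subspace} here means a vector subspace on which the induced compact-open topology is a complete norm topology.

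Once this is recognised, $(1)\Rightarrow(2)$ collapses to an elementary observation. If $B\subset C_k(X)$ is a Banach (indeed merely normable) subspace, it has a bounded neighbourhood of zero, which must contain a basic neighbourhood $V=\{f\in B: p_K(f)<\e\}$ for some compact $K\subset X$ and $\e>0$. Boundedness of $V$ forces $\ker(p_K)\cap B=\{0\}$ and makes $\{\tfrac1n V\}_n$ a neighbourhood base, so $p_K|_B$ is an equivalent norm on $B$; hence the restriction map $f\mapsto f|_K$ is an isomorphic embedding of $B$ onto a closed (by completeness) subspace of the Banach space $C(K)$. Under hypothesis (1) the compact set $K$ is scattered, $C(K)$ is Asplund by Theorem~\ref{NP}, and closed subspaces of Asplund Banach spaces are Asplund by Theorem~\ref{separable-dual}, so $B$ is Asplund. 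No fragmentation or Rosenthal dichotomy is needed, and the "synthesis over all compact $L\subset X$" you describe as the technical crux does not arise: a single compact set $K$ already controls the whole norm topology of $B$.
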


One can ask for which  (non-compact)  spaces $X$ any of the above conditions is equivalent to the scatteredness of the whole $X$.
The answer to this question is given for several natural classes
of non-compact Tychonoff spaces, including $\omega$-bounded spaces $X$ (for details see \cite{KKL}).
Note that similar locally convex versions of other Banach spaces properties (like (NP)-property) were recently studied in \cite{KM}.

Let $E$ be a lcs with a defining family $\mathcal{P}$  of seminorms generating the topology of $E$. For each $p\in\mathcal{P}$ define the norm $\overline{p}(x+ker(p))=p(x)$, $x\in E$, on the quotient space $E/ker(p)$.  Set $E_p=(E/ker(p),\overline{p})$ for each $p\in\mathcal{P}$.

 Following \cite{Eyland_Sharp1} we call $E$ {\it bound covering}, if there exists in E a defining family $\mathcal{P}$ such that the quotient map  $\pi_{p}: E\rightarrow E_p$ is open and for each bounded set $B\subset E_p$ there exists a bounded set $A\subset E$ with $B\subset \pi_{p}(A)$.

\begin{theorem}[Eyland-Sharp]\label{factor}\hfill
\begin{itemize}
\item[{\rm (i)}] If  $E$ is  a bound covering lcs with a defining family $\mathcal{P}$ such that for  each $p\in\mathcal{P}$ the space $E_p$ is Asplund (weak Asplund), then $E$ is Asplund (weak Asplund).
\item[{\rm (ii)}] The converse also holds if $E$ is a Fr\'echet lcs.
\item[{\rm (iii)}] If $f$ is a continuous convex function defined on an open convex subset $D$ of a bound covering  lcs $E$, the set of points of Fr\'echet differentiability of $f$ is a $G_{\delta}$ set in $D$.
\end{itemize}
\end{theorem}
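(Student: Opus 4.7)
\smallskip

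\noindent\textbf{Proof proposal.} The unifying tool for all three parts is a \emph{local factorization} of continuous convex functions through a seminorm quotient. If $f : D \to \R$ is continuous and convex on an open convex $D \sbs E$, then for every $x_0 \in D$ one finds a balanced convex open neighborhood $W$ of $0$ with $x_0 + 2W \sbs D$ on which $f$ is bounded above; convexity then yields a local Lipschitz estimate $|f(y)-f(z)| \le C\,p(y-z)$ for $y,z \in x_0+W$, where $p$ is the Minkowski functional of $W$. After replacing $p$ by a larger member of $\mathcal{P}$, we may assume $p\in\mathcal{P}$. Consequently $f$ is constant on cosets of $\ker(p)$ inside $U := x_0+W$, and so descends to a continuous convex $\widetilde{f}$ defined on the open set $\pi_p(U) \sbs E_p$ (open precisely because $\pi_p$ is open, which is the first half of bound-covering).

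For part (i), the Asplund (respectively WASP) hypothesis on $E_p$ delivers a dense $G_\delta$ subset of $\pi_p(U)$ on which $\widetilde{f}$ is Fr\'echet (G\^ateaux) differentiable; its preimage under $\pi_p$ is then a dense $G_\delta$ in $U$ (dense because $\pi_p$ is open and surjective, $G_\delta$ by continuity). For G\^ateaux the differentiability transfers automatically since $f$ is locally constant along $\ker(p)$. For Fr\'echet the second half of bound-covering is essential: to verify the required $o$-estimate over bounded subsets of $E$ starting from the analogous estimate over bounded subsets of $E_p$, one uses that every bounded $B \sbs E_p$ lifts to a bounded $A \sbs E$ with $B \sbs \pi_p(A)$. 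Covering $D$ by such neighborhoods $U$ and invoking part (iii) (so that the global differentiability set is known to be $G_\delta$) promotes the local density to a genuine dense $G_\delta$ in $D$.

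For part (ii), assume $E$ is Fr\'echet and Asplund (WASP). Given a continuous convex $g$ on an open convex $V \sbs E_p$, the pullback $f := g \circ \pi_p$ is continuous and convex on the open set $\pi_p^{-1}(V) \sbs E$, hence differentiable on a dense $G_\delta$. The bound-covering lifting of bounded sets now runs in the reverse direction: a sequence in $E_p$ witnessing a putative failure of (Fr\'echet) differentiability of $g$ at $\pi_p(x)$ lifts to a sequence in $E$ contradicting differentiability of $f$ at $x$. Combining this with the openness of $\pi_p$ and the metrizability (countable $\mathcal{P}$) of $E$, one concludes that the $\pi_p$-image of the $f$-differentiability set is dense $G_\delta$ in $V$.

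For part (iii), fix a continuous convex $f$ on $D$ with $E$ bound-covering. Write the Fr\'echet differentiability set as the intersection, over bounded $B \sbs E$ and integers $n \ge 1$, of the quantitative sets
\[
G_{B,n} := \bigl\{ x \in D : \exists\, \delta>0,\ \exists\, L \in E',\ |f(x+h)-f(x)-L(h)| \le \tfrac{1}{n}\, p_B(h)\ \forall\, h \in \delta B \bigr\},
\]
where $p_B$ is the Minkowski functional of a balanced convex hull of $B$. Each $G_{B,n}$ is open by a standard subdifferential/modulus-of-continuity argument. The main obstacle, and the precise reason bound-covering is indispensable, is replacing the potentially uncountable intersection over all bounded $B$ by a genuinely countable one. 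This is accomplished by the local factorization of the first paragraph: the problem reduces to a single normed quotient $E_p$, where bounded sets admit a countable cofinal system (multiples of the unit ball) and Fr\'echet differentiability is already $G_\delta$ by the Banach space argument. Pulling back this $G_\delta$ along the continuous open map $\pi_p$ completes the proof.
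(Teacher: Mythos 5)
First, note that the paper itself offers no proof of Theorem \ref{factor}: the statement is imported wholesale from Eyland--Sharp, the text right after it saying that it ``explicitly follows from \cite[Theorem 3.3 (2), Theorem 2.3 (3,4), Theorem 3.2]{Eyland_Sharp1}''. So there is no internal argument to compare yours against; what can be assessed is whether your sketch would reconstruct the cited result. Your central device --- a continuous convex $f$ is, near each point, Lipschitz for some $p\in\mathcal{P}$ and hence factors locally through the normed quotient $E_p$ --- is indeed the mechanism behind the ``Factor Theorem'', and several of your steps (openness of $\pi_p$ giving density of preimages, well-definedness and convexity of $\widetilde f$, the transfer of G\^ateaux differentiability) are sound.

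There are, however, genuine gaps. The first is the local-to-global step. Your factorization produces, for each $x_0\in D$, a neighborhood $U_{x_0}$ and a seminorm $p_{x_0}$ \emph{depending on $x_0$} such that the relevant differentiability set is a (dense) $G_\delta$ in $U_{x_0}$; you then assert it is a dense $G_\delta$ in all of $D$. But $D$ is an open subset of a possibly non-metrizable lcs (the intended application is $C_k(X)$), the cover $\{U_{x_0}\}$ is uncountable, and a union of relatively-$G_\delta$ sets over such a cover need not be $G_\delta$. Your proposed repair in (iii), writing $F=\bigcap_{B,n}G_{B,n}$ and ``reducing to a single normed quotient'', does not close this: the witnessing seminorm changes from point to point and from $n$ to $n$, and a bounded set $B\subset E$ is absorbed by $\{p_n\le 1\}$ only with a constant $\lambda_n$ that may blow up, so the inclusion $\bigcap_n G_{B,n}\subset F$ does not follow. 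This gluing is exactly what the cited Theorems 2.3 and 3.2 of \cite{Eyland_Sharp1} are engineered to do, and it is missing here. The second gap is in part (ii): you must push a dense $G_\delta$ \emph{forward} along $\pi_p$ into $E_p$, but the image of a $G_\delta$ under a continuous open map is not $G_\delta$ in general. In the Fr\'echet/Asplund case one can fall back on the automatic $G_\delta$-ness of the Fr\'echet differentiability set in a normed space, but in the G\^ateaux/weak Asplund case the differentiability set of $g$ need not be $G_\delta$ and merely \emph{contains} the dense set $\pi_p(G)$, so the sentence ``one concludes that the $\pi_p$-image of the $f$-differentiability set is dense $G_\delta$ in $V$'' is unjustified as written. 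Finally, a smaller but real point: in part (i) you invoke the bounded-set lifting half of bound covering to pass the Fr\'echet estimate from $E_p$ up to $E$, yet that direction only needs that $\pi_p$ maps bounded sets of $E$ to bounded sets of $E_p$; the lifting is needed in the \emph{opposite} direction, namely to descend Fr\'echet differentiability of $f$ to $\widetilde f$, which is where it enters in parts (ii) and (iii).
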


Theorem \ref{factor} is a consequence of  \cite[Theorem 2.2]{Eyland_Sharp1},  explicitly follows from  \cite[Theorem 3.3 (2), Theorem 2.3 (3,4), Theorem 3.2]{Eyland_Sharp1}; 
it was used to show the following characterization of the Asplund property for the spaces $C_k(X)$, see \cite[Corollary 3.8]{Eyland_Sharp1}. 

\begin{theorem}[Eyland-Sharp]\label{FFF}
Let $X$ be a Tychonoff space.
$C_k(X)$ is an Asplund space if and only if every compact subset of $X$ is scattered. 
\end{theorem}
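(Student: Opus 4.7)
The plan is to apply the Eyland--Sharp factor theorem (Theorem~\ref{factor}) together with the Namioka--Phelps characterisation (Theorem~\ref{NP}) to the natural family of compact-open seminorms on $C_k(X)$. For each compact $K\subset X$ set $p_K(f)=\sup_{x\in K}|f(x)|$ and let $r_K: C_k(X)\to C(K)$ denote the restriction $f\mapsto f|_K$. Since every compact subset of a Tychonoff space is $C^{*}$-embedded, Tietze's extension theorem (with preservation of the sup-norm) makes $r_K$ a continuous linear surjection and identifies $(C_k(X))_{p_K}$ isometrically with the Banach space $C(K)$.

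The first real step is to check that $C_k(X)$ is bound covering with the defining family $\mathcal{P}=\{p_K : K\subset X \text{ compact}\}$. Openness of $\pi_{p_K}=r_K$ follows because any $g\in C(K)$ with $\|g\|_{\infty}<\varepsilon$ admits a norm-preserving Tietze extension $f\in C_k(X)$ satisfying $p_K(f)=\|g\|_{\infty}<\varepsilon$; the same norm-preserving extension allows us to lift any bounded $B\subset C(K)$ to a bounded $A\subset C_k(X)$ with $r_K(A)\supset B$ by selecting, for each $g\in B$, an extension $f_g$ with $\|f_g\|_{\infty}=\|g\|_{\infty}$.

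With bound covering in hand the sufficiency direction is immediate. If every compact $K\subset X$ is scattered then $C(K)$ is Asplund by Theorem~\ref{NP}, and Theorem~\ref{factor}(i) yields that $C_k(X)$ is Asplund.

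The main obstacle is the necessity direction, because $C_k(X)$ is not Fr\'echet in general, so Theorem~\ref{factor}(ii) cannot be invoked. Instead I would transfer Fr\'echet differentiability along the continuous open surjection $r_K$. Given a continuous convex function $g:U\to\R$ on an open convex $U\subset C(K)$, the pullback $g\circ r_K$ is continuous convex on the open convex set $r_K^{-1}(U)\subset C_k(X)$ and, by the hypothesised Asplundness, is Fr\'echet differentiable on a dense $G_{\delta}$ subset $D'$. At any $x_0\in D'$ the derivative vanishes on $\ker r_K=\ker p_K$ and therefore factors through some $\psi\in C(K)^{*}$. The key estimate is that, lifting any bounded $B\subset C(K)$ to a bounded $A\subset C_k(X)$ with $r_K(A)\supset B$ via the bound covering property, the Fr\'echet remainder for $g$ at $r_K(x_0)$ along $B$ is dominated by that of $g\circ r_K$ at $x_0$ along $A$; hence $g$ is Fr\'echet differentiable at every point of the dense set $r_K(D')$. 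Since $C(K)$ is Banach and thus trivially bound covering, Theorem~\ref{factor}(iii) promotes this dense set to a dense $G_{\delta}$ in $U$. Therefore $C(K)$ is Asplund, and Theorem~\ref{NP} forces $K$ to be scattered.
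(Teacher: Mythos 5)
Your proof is correct, and its sufficiency half is exactly the paper's own route: the paper establishes Theorem~\ref{FFF} by introducing the same family $\{p_K\}$, identifying $C_k(X)/\ker(p_K)$ with the Banach space $(C(K),p_K)$, checking the bound covering property by lifting the unit ball of $C(K)$ to the bounded set $\{f\in C(X): |f(x)|<1,\ x\in X\}$ via (truncated) Tietze extension, and then combining Theorem~\ref{factor}(i) with the Namioka--Phelps Theorem~\ref{NP}. Where you genuinely depart from the text is the necessity direction: the paper does not write it out, recording only that the family yields ``$C_k(X)$ is Asplund iff each $C(K)$ is Asplund'' and deferring to Eyland--Sharp, and, as you correctly observe, the stated converse Theorem~\ref{factor}(ii) is formulated only for Fr\'echet spaces and so cannot be quoted for $C_k(X)$. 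Your substitute --- pull back a continuous convex $g$ on $U\subset C(K)$ to $g\circ r_K$ on $r_K^{-1}(U)$, note that the Fr\'echet derivative at a differentiability point annihilates $\ker p_K$ and hence factors through a continuous $\psi\in C(K)'$ (continuity via openness of $r_K$), dominate the remainder of $g$ along a bounded $B\subset C(K)$ by the remainder of $g\circ r_K$ along a bounded lift $A$ with $r_K(A)\supset B$, use openness and surjectivity of $r_K$ to see that the image of the dense set of differentiability points is dense in $U$, and then invoke Theorem~\ref{factor}(iii) (or the classical Banach-space fact) to upgrade to a dense $G_\delta$ --- is precisely the missing ``quotient direction'' for bound covering open surjections onto Banach factors, and it is sound; this is what buys you the theorem without the Fr\'echet hypothesis of part (ii). The only step you should make explicit is the openness of $r_K$: it must be verified against all basic neighbourhoods $\{f: p_L(f)<\varepsilon\}$ with $L\supseteq K$ compact, not merely against $\{p_K<\varepsilon\}$; the uniformly bounded extension settles this, since an extension $f$ of $g$ with $\sup_{x\in X}|f(x)|=\|g\|_{\infty}<\varepsilon$ automatically satisfies $p_L(f)<\varepsilon$ for every compact $L$.
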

Indeed, the only application of Theorem \ref{factor}, which Eyland and Sharp discovered in  \cite{Eyland_Sharp1},
 deals  with  the following procedure which lead finally to the proof of Theorem \ref{FFF} (see also \cite[Example 3.6]{Eyland_Sharp1}):
Let $\mathcal{K}(X)$ be the family of all compact subsets of $X$. Endow the space $C_k(X)$ with the topology generated by seminorms $$p_{K}(f)=sup_{x\in K}|f(x)|,\,\,K\in\mathcal{K}(X).$$
The quotient space $C_k(X)/ker(p_{K})$ is isomorphic to the Banach space $(C(K),p_K)$ 
(via the isomorphism  defined by $(f+ker(p_K)) \mapsto f|K$) and the quotient map $\pi_{K}: C_k(X)\rightarrow (C(K),p_K)$ is bound covering,
i.e. if $B$ is the  unit ball in the Banach space  $C(K)$, then $B\subset \pi_{K}(M)$ for the  bounded  set $$M=\{f\in C(X): |f(x)|<1, x\in X\}\subset C_k(X).$$
The family $\{\pi_{K}, (C(K),p_K): K\in\mathcal{K}(X)\}$ was used in  the proof of Theorem \ref{FFF} to conclude that  for a Tychonoff space $X$ the space  $C_k(X)$ is Asplund if and only if  for each compact $K\subset X$ the Banach space  $C(K)$ is Asplund.

Note however that Theorem \ref{factor} can be also applied to prove the next Sharp's result \cite[Theorem 5.5]{Sharp}.
\begin{theorem}[Sharp]\label{weak}
A lcs $E_w$ equipped with the weak topology $w=\sigma(E,E')$  is an Asplund space. Consequently, for each Tychonoff space $X$ the space $C_p(X)$ is an Asplund space.
\end{theorem}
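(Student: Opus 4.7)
The plan is to apply the Eyland--Sharp Factor Theorem \ref{factor}(i) to the canonical defining family of seminorms for $E_w$. For each finite subset $F = \{f_1, \ldots, f_n\} \subset E'$, I would take $p_F(x) := \max_{1 \leq i \leq n} |f_i(x)|$; the family $\mathcal{P} = \{p_F : F \subset E' \text{ finite}\}$ generates $\sigma(E,E')$. The key observation is that $\ker(p_F) = \bigcap_{i=1}^n \ker(f_i)$ has codimension at most $n$, so $E_{p_F}$ with the induced norm $\overline{p}_F$ is a finite-dimensional Banach space, hence trivially Asplund. Thus the only real content of the argument is verifying that $E_w$ is bound covering with respect to $\mathcal{P}$.

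For openness of $\pi_F : E_w \to E_{p_F}$, I would take a basic neighborhood $V = \{x : p_{F'}(x) < 1\}$ of $0$ in $E_w$ with $F' \subset E'$ finite; the image $\pi_F(V)$ is convex, symmetric and absorbing in the finite-dimensional space $E_{p_F}$, so its Minkowski functional is a seminorm on a finite-dimensional space, hence continuous, which forces $\pi_F(V)$ to be a $0$-neighborhood. For the bound-lifting clause, given a bounded set $B \subset E_{p_F}$, I would pick a finite-dimensional algebraic complement $W$ of $\ker(p_F)$ in $E$, so that $\pi_F \vert_W : W \to E_{p_F}$ is a linear isomorphism, and set $A := (\pi_F \vert_W)^{-1}(B)$. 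Since $W$ carries a unique Hausdorff vector topology, $A$ is bounded in $W$ and therefore also in $E_w$, while $\pi_F(A) = B$. With both clauses established, Theorem \ref{factor}(i) yields that $E_w$ is Asplund.

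For the consequence about $C_p(X)$, I would invoke the standard duality fact that $(C_p(X))' = L(X)$ and that the pointwise topology coincides with $\sigma(C(X), L(X))$; in other words, $C_p(X)$ is already presented as the weak topology of a dual pair, so setting $E := C_p(X)$ in the first part immediately gives that $C_p(X)$ is Asplund. The main obstacle is really only an observation rather than a difficulty: the natural defining family of seminorms for $E_w$ has finite-dimensional quotients, after which the Eyland--Sharp machinery does all the heavy lifting and the verification of bound covering reduces to elementary finite-dimensional linear algebra.
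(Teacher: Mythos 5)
Your argument is correct and follows essentially the same route as the paper: both apply the Eyland--Sharp Factor Theorem \ref{factor}(i) after observing that the quotients associated to the canonical weak-topology seminorms are finite-dimensional Banach spaces (hence trivially Asplund) and that a finite-dimensional algebraic complement of the kernel maps isomorphically onto the quotient, which gives the bound covering property. The only cosmetic difference is that the paper uses the seminorms $|x^{*}(\cdot)|$ for single functionals $x^{*}\in E'$, producing one-dimensional quotients, whereas you take finite maxima and get quotients of finite codimension; both families generate $\sigma(E,E')$ and the verification is identical in spirit.
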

\begin{proof}
Let  $E'$ be  the topological dual of $E$ and $p^{*}(x)=|x^{*}(x)|$, $x\in E$, $x^{*}\in E'$. Then the family of seminorms $p^{*}$ generates the weak topology of $E_w$. It is well known that $E=ker(p^{*})\oplus Z$ (topologically), and  dim $Z=1$. If $Q: E\rightarrow E/ker(p^{*})$ is the quotient map, then $Q|Z: Z\rightarrow E/ker(p^{*})$ is an isomorphism, so $Q$ is a bound covering map, i.e. for each bounded set $B\subset E/ker(p^{*})$ there exists a bounded set $A$  in $E$ such that $Q(A)\subset B.$ Moreover, the space $(E/ker(p^{*}), \overline{p^{*}})$ is a Banach space isomorphic to the quotient space $E/ker(p^{*})$, where
$$\overline{p^{*}}(x+ker(p^{*}))= p^{*}(x)=|x^{*}(x)|,\, x\in E.$$ Hence the family of spaces $E_{p^{*}}=(E/ker(p^{*}), \overline{p^{*}})$, for $x^{*}\in E'$,  satisfies  the assumptions of Theorem \ref{factor} (i)  what implies that $E_w$ is Asplund.
\end{proof}
\section{Weak Asplund locally convex spaces and Quasi-Baire spaces}\label{x}
\bigskip
For Banach spaces  several questions around  weak Asplund spaces are still open. Recall the following two fundamental  problems.
\begin{problem}\label{P1}
For a Banach space $E$ consider the following two questions, \cite[p. 170]{Fabian}.
\begin{enumerate}
\item If $E$ is weak Asplund, is any (uncomplemented) subspace of $E$  weak Asplund?
\item If $E$  is GDS, is any (uncomplemented) subspace of it also GDS?
\end{enumerate}
\end{problem}
It is known  that every complemented subspace of a (weak) Asplund Fr\'echet lcs  is weak Asplund, see \cite[Theorem 1.3]{Eyland_Sharp1}.
\begin{problem} Is every closed subspace of an Asplund Fr\'echet lcs also Asplund?
\end{problem}
As we have mentioned above, Theorem \ref{pro-2} extended  Sharp's Theorem \ref{theorem_sharp}.
For spaces $C_k(X)$ we have even the following
\begin{corollary} \label{third}
Every separable space  $C_k(X)$ is a weak Asplund space.
\end{corollary}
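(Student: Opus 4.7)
The plan is to leverage the bound-covering structure on $C_k(X)$ already exhibited in the proof of Theorem \ref{FFF} and apply the Eyland--Sharp Factor Theorem (Theorem \ref{factor}(i)) in its weak Asplund version. The reasoning is directly parallel to the Asplund case in Theorem \ref{FFF}; only the ingredient about each factor Banach space changes.

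First, I would recall from the discussion following Theorem \ref{FFF} that for any Tychonoff space $X$, the family $\{p_K : K\in\sK(X)\}$ of compact-seminorms is a defining family for $C_k(X)$, that each quotient $C_k(X)/ker(p_K)$ is isomorphic to the Banach space $(C(K),p_K)$, and that the quotient map $\pi_K: C_k(X)\to (C(K),p_K)$ is bound covering (the bound-covering condition being witnessed by the bounded set $M=\{f\in C(X):|f(x)|<1, x\in X\}$, and surjectivity of $\pi_K$ following by extending $g\in C(K)$ through $\beta X$ via Tietze's theorem). Hence $C_k(X)$ is a bound-covering lcs with defining family $\{p_K\}$, and by Theorem \ref{factor}(i) it suffices to prove that each $(C(K),p_K)$ is weak Asplund.

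Next, I would transfer separability to the factor spaces. Since $\pi_K$ is a continuous linear surjection, separability of $C_k(X)$ forces each Banach space $(C(K),p_K)$ to be separable. By the classical Mazur theorem (equivalently, Theorem \ref{theorem_sharp} applied in the Baire Banach setting), every separable Banach space is weak Asplund. Thus each $(C(K),p_K)$ is WASP, and Theorem \ref{factor}(i) yields that $C_k(X)$ is weak Asplund.

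There is no serious obstacle: the argument is essentially a reassembly of tools already developed in the paper, the only small routine checks being the Tietze-based surjectivity of $\pi_K$ and the descent of separability through a continuous surjection. What is noteworthy is that, unlike in the $C_p$ case treated in Theorem \ref{weak}, the hypothesis of separability is genuinely used here, and cannot be dropped: for instance, Corollary \ref{closed} shows that $C_k(Q)$ contains a closed copy of $\varphi$, which is not even GDS, so already at this stage one sees that the factor-theorem route through Banach $C(K)$ is the natural pathway.
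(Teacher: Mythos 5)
Your proof is correct and follows essentially the same route as the paper: separability of $C_k(X)$ passes to each Banach factor $C(K)$ via the restriction map, Mazur's theorem makes each $C(K)$ weak Asplund, and Theorem \ref{factor}(i) together with the bound-covering family described after Theorem \ref{FFF} reassembles the conclusion. Your additional remarks on the Tietze-based surjectivity of $\pi_K$ and on the indispensability of the separability hypothesis are accurate but go beyond what the paper's own (identical in substance) argument records.
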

\begin{proof}
If $K$ is any compact subset of $X$ then the Banach space $C(K)$ is separable as a continuous image of $C_k(X)$ under the restriction mapping.
So $C(K)$ must be weak Asplund. Applying  Theorem \ref{factor} (i) with the help of the comment after Theorem \ref{FFF}   we derive that $C_k(X)$ is a weak Asplund space.
\end{proof}
\begin{example}\label{first}
A separable Banach space $C([0,1])$ is weak Asplund but not Asplund by Theorem \ref{NP} because $[0,1]$ is not scattered.
\end{example}

\begin{example}\label{second}
Let $M$ be a metric space.  For a fixed point $e\in M$, $Lip_0(M)$ denotes the Banach space of all real Lipschitz functions  on $M$ such that $f(e)=0$, equipped with the norm $\|f\|_{lip_0(M)}=lip(f)$, where $lip(f)$ denotes the Lipschitz constant of $f$.  By $\mathcal{F}(M)$ we denote the predual of $Lip_0(M)$, i.e. $\mathcal{F}(M)^{'}=Lip_0(M)$. Recall that (*) $$ \mathcal{F}(M) = \overline{span}\{\delta_{x}: x\in M\}\subset Lip_0(M)^{''},$$ and
$\mathcal{F}(M)$ contains a complemented copy of $\ell_{1}(d(M))$, \cite[Proposition 3]{Hajek} and  $Lip_0(M)$ contains a copy of $\ell_{\infty}(d(M))$ \cite[Theorem 8]{Doucha}. Therefore,

(1)  $\mathcal{F}(M)$ is  weak Asplund if and only if $M$ is separable. Indeed, if $M$ is separable, we apply (*) to derive that $\mathcal{F}(M)$ is separable; hence it is weak Asplund. Conversely, if
 $\mathcal{F}(M)$ is weak Asplund then $\ell_{1}(d(M))$ is weak Asplund (since  $\mathcal{F}(M)$ is mapped onto $\ell_{1}(d(M))$ by a continuous linear projection and  \cite[Theorem 1.3 (1)]{Eyland_Sharp1} applies.
Note however that $\ell_{1}(\Gamma)$ is not weak Asplund for uncountable sets $\Gamma$, see \cite[Ex. 8.26]{Fabian1}.

(2) (CH)  $\mathcal{F}(M)$ is weak Asplund if and only if the closed unit ball in $\mathcal{F}(M)'$ is $w^{*}$-sequentially compact. Indeed, if $\mathcal{F}(M)$ is weak Asplund, then the conclusion holds by  \cite[Theorem 2.1.2]{Fabian}. For the converse we apply item (1) and \cite[p. 226]{Diestel} stating that any Banach space containing a copy of $\ell_{1}(\mathfrak{c})$ cannot have $w^{*}$-sequentially compact dual ball.

(3) For infinite $M$ the space $\mathcal{F}(M)$ is not Asplund (since contains a copy of $\ell_{1}$ and Theorem \ref{separable-dual} applies).

(4) The space $Lip_0(M)$ over an infinite metric space $M$ is not Asplund since it contains a copy of $\ell_{\infty}$ (hence contains also $\ell_{1}$).
\end{example}

For the spaces $C_k(X)$ we reiterate the following question which is  unsolved even for compact spaces $X$, see \cite[2.4 Notes and Remarks]{Fabian}.
\begin{problem}
Characterize in terms of $X$ the weak Asplund property for $C_k(X)$.
\end{problem}
In order to prove  the main result of Section \ref{GDS} we  will need  the following useful facts related with the space $\varphi$.
\begin{fact} \label{fact1} Let $E$ and $F$  be  lcs and $T:F \to E$ be a linear continuous operator onto.
If $F$ is GDS then so is $E$.
\end{fact}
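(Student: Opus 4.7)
The plan is a standard pullback along $T$. Given any continuous convex function $f: D \to \R$ on a non-empty open convex subset $D \subset E$, I would introduce $\tilde D := T^{-1}(D) \subset F$ (open by continuity of $T$, convex by linearity, non-empty by surjectivity) and the composite $\tilde f := f \circ T : \tilde D \to \R$, which is continuous and convex. Since $F$ is GDS, there exists a dense subset $A \subset \tilde D$ at every point of which $\tilde f$ is G\^ateaux differentiable. The task then reduces to two claims: (a) $T(A)$ is dense in $D$; (b) $f$ is G\^ateaux differentiable at every point of $T(A)$.

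Claim (a) is a soft argument using only continuity and surjectivity of $T$. Given $v \in D$ and an open neighborhood $V$ of $v$ in $E$, pick any $u \in T^{-1}(v)$; continuity supplies an open neighborhood $U \subset \tilde D$ of $u$ with $T(U) \subset V$, and density of $A$ in $\tilde D$ yields $u' \in A \cap U$, whence $T(u') \in T(A) \cap V$.

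For claim (b), fix $y \in A$ and let $L := D\tilde f(y) \in F'$ be the G\^ateaux derivative of $\tilde f$ at $y$. The crucial observation is that $L$ vanishes on $\ker T$: if $T(u) = 0$, then $\tilde f(y + tu) = f(T(y) + tT(u)) = f(T(y))$ is constant in $t$, forcing $L(u) = 0$. Hence the rule $\ell(v) := L(u)$ (for any $u \in T^{-1}(v)$) is unambiguous and defines a linear functional $\ell : E \to \R$, and the identity
\[
\frac{f(T(y) + tv) - f(T(y))}{t} \;=\; \frac{\tilde f(y + tu) - \tilde f(y)}{t}
\]
shows that $f$ admits directional derivative $\ell(v)$ at $T(y)$ in every direction $v \in E$.

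The main (though mild) obstacle I expect is verifying continuity of $\ell$ on $E$: continuity of $L$ on $F$ does not automatically descend via $T$, since $T$ need not be open. To handle this I would invoke the standard fact that a continuous convex function on an open subset of a topological vector space is bounded above on some neighborhood of every point of its domain, which forces the sublinear one-sided derivative $v \mapsto f'(T(y); v)$ to be bounded above on that same neighborhood and hence continuous on $E$. Therefore $\ell = f'(T(y); \cdot)$ is a continuous linear functional, $f$ is G\^ateaux differentiable on the dense set $T(A) \subset D$, and $E$ is GDS.
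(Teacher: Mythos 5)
Your proof is correct. The paper itself offers no argument for this fact, only a citation to Eyland--Sharp [Theorem 1.1], and your pullback construction is precisely the standard proof behind that reference: you correctly identify the one genuinely delicate point --- that continuity of the induced functional $\ell$ cannot be pushed forward through $T$ (which need not be open) --- and resolve it properly by observing that $\ell$ coincides with the one-sided derivative $f'(T(y);\cdot)$, a sublinear functional that is bounded above on a neighborhood of the origin by local boundedness of the continuous convex $f$, hence continuous.
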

\begin{proof} This is provided in \cite[Theorem 1.1]{Eyland_Sharp1}.
\end{proof}
\begin{definition} Denote by $\ell_0$ the linear space of countable real sequences with only finitely many nonzero elements.
We consider $\ell_0$ as a normed subspace of the Banach space $\ell_1$.
Denote the natural Hamel basis in $\ell_0$ by $\{e_n: n\in \N\}$. Each $e_n$ is a sequence with $1$ on the $n$-th place, on all other places we put zeros.
By $\mO$ we denote the zero element of $\ell_0$.
\end{definition}
\begin{fact} \label{fact2}
$f(x) = ||x||$ is a continuous convex function defined on $\ell_0$ which is not G\^ ateaux differentiable at any point.
\end{fact}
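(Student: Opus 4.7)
The plan is to observe that every element of $\ell_0$ has infinitely many zero coordinates, and to exploit any one of those coordinates to exhibit a direction in which the one-sided directional derivatives of the norm disagree.

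First, I would dispense with continuity and convexity in one line: $f$ is the restriction to $\ell_0$ of the norm on $\ell_1$, so it is automatically a continuous convex (in fact $1$-Lipschitz, sublinear) function. The whole substance of the statement lies in the non-differentiability part.

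For the Gâteaux differentiability, I would fix an arbitrary $x = (x_k) \in \ell_0$. By definition of $\ell_0$, only finitely many coordinates $x_k$ are nonzero, so there exists (cofinitely many) an index $n \in \N$ with $x_n = 0$. Then, using that the basis vector $e_n$ is supported on a coordinate where $x$ vanishes, one has for every real $t$
\[
\|x + t e_n\| \;=\; \sum_{k\neq n}|x_k| + |t| \;=\; \|x\| + |t|.
\]
Hence
\[
\frac{\|x+te_n\|-\|x\|}{t} \;=\; \frac{|t|}{t},
\]
whose limit as $t\to 0^+$ equals $+1$ while its limit as $t\to 0^-$ equals $-1$. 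Therefore the two-sided limit defining the directional derivative of $f$ at $x$ in the direction $e_n$ does not exist, which already precludes Gâteaux differentiability of $f$ at $x$. Since $x \in \ell_0$ was arbitrary, $f$ fails to be Gâteaux differentiable at every point.

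There is really no serious obstacle here; the proof hinges on the elementary but essential fact that in $\ell_0$ (as opposed to in $\ell_1$) no point has all coordinates nonzero, so the usual obstruction to differentiability of the $\ell_1$-norm at a point with a vanishing coordinate is unavoidable. This is precisely why $\ell_0$ will furnish the desired counterexample to the GDS property in the subsequent application to $\varphi$ (and ultimately to $L(X)$).
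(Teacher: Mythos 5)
Your proof is correct and is exactly the standard computation: the paper's own ``proof'' is merely a citation to Phelps [Example 1.4(b)], which rests on the same observation that the $\ell_1$-norm fails to be Gâteaux differentiable in the direction $e_n$ at any point whose $n$-th coordinate vanishes, and every point of $\ell_0$ has such a coordinate. Your write-up simply makes that cited argument explicit, so there is nothing to add.
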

\begin{proof} This is included in \cite[Example 1.4(b)]{Phelps}.
\end{proof}
By $\varphi$ we denote an $\aleph_{0}$-dimensional vector space endowed with the finest locally convex topology.  It is obvious that $\varphi$ is the countable inductive limit of finite-dimensional Banach subspaces; hence  $\varphi$  is a Montel $(DF)$-space and is  a nonmetrizable $(LB)$-space,  see \cite{KKP} for several facts related with this space. Next Fact \ref{fact4}  is also mentioned in  \cite[Example 6.1]{Sharp} but without using the name $\varphi$.
\begin{fact} \label{fact4}
The space $\varphi$ is not GDS. Indeed, let $I: \varphi\rightarrow \ell_0$ be the (continuous) identity. If $\varphi$ was  GDS, then $\ell_0$ would also be  GDS, a contradiction by Fact \ref{fact2}.
\end{fact}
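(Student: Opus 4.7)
The plan is to exhibit an explicit continuous linear surjection from $\varphi$ onto a space that is manifestly not GDS, and then transfer the obstruction back via Fact~\ref{fact1}. The natural candidate for the surjection is the algebraic identity onto $\ell_0$ regarded as a normed subspace of $\ell_1$.

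First I would check that the identity $I: \varphi \to \ell_0$ makes sense and is a continuous linear surjection. Choose the Hamel basis $\{e_n : n \in \N\}$ of $\ell_0$ as a Hamel basis of $\varphi$; then the identity is a well-defined linear bijection between the two underlying vector spaces, so in particular a surjection. Its continuity is automatic from the defining universal property of the finest locally convex topology on $\varphi$: every seminorm on the countable-dimensional underlying vector space is continuous with respect to this topology, and in particular the restriction of the $\ell_1$-norm is continuous on $\varphi$.

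Next I would argue by contradiction. Suppose $\varphi$ is GDS. By Fact~\ref{fact1} applied to the continuous linear surjection $I$, the target space $\ell_0$ would also be GDS. However, Fact~\ref{fact2} exhibits a continuous convex function on $\ell_0$, namely $f(x) = \|x\|$, which fails to be G\^ateaux differentiable at every single point of $\ell_0$. Hence the set of G\^ateaux differentiability points of $f$ is empty, and in particular not dense in $\ell_0$. This contradicts the assumption that $\ell_0$ is GDS, and therefore $\varphi$ is not GDS.

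I do not foresee any serious obstacle: the argument is essentially a two-line assembly of the two preceding Facts. The only subtle point worth highlighting is the continuity of $I$, which is a conceptual rather than a computational matter and follows at once from the defining property of $\varphi$.
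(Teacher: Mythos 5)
Your proposal is correct and follows exactly the paper's argument: the continuous identity $I:\varphi\to\ell_0$ (continuous because every seminorm, in particular the $\ell_1$-norm, is continuous for the finest locally convex topology), combined with Fact~\ref{fact1} and Fact~\ref{fact2}. The extra detail you supply on the continuity of $I$ is the only point the paper leaves implicit.
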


 Let us recall a few Baire type conditions for a  lcs $E$. Evidently, a topological space is Baire, if it is not the union of an increasing sequence of nowhere dense subsets.

\begin{enumerate}
\item $E$ is called a Baire-like space, if $E$ is not the union  of an increasing sequence of nowhere dense absolutely convex subsets.
\item $E$ is a barrelled space, if every $\sigma(E',E)$-bounded set in $E'$ is equicontinuous.
\item $E$ is a quasi-Baire space, if $E$ is barrelled and $E$ is not the union of an increasing sequence of nowhere-dense linear subspaces of $E$.
\end{enumerate}

Clearly,  Baire $\Rightarrow$ Baire-like $\Rightarrow$ quasi-Baire $\Rightarrow$ barrelled.
The following result of Narayanaswami-Saxon \cite[Theorem 1]{Na-Saxon} will be used below.

\begin{theorem}[Narayanaswami-Saxon]\label{complemented}
Let $E$ be a barrelled space. Then $E$ is not quasi-Baire if and only if $E$ contains a complemented copy of $\varphi$.
\end{theorem}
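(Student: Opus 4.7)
\textbf{Plan for the proof of Theorem~\ref{complemented}.}
The plan is to handle the two implications separately, with the ``only if'' direction being by far the harder one. For the easy direction, assume $E = M \oplus F$ topologically with $M$ a copy of $\varphi$. Fix the canonical Hamel basis $\{e_n\}$ of $M \cong \varphi$ and set $M_n := \mathrm{span}\{e_1,\dots,e_n\}$. Each $M_n$ is finite-dimensional, hence closed in $M$, so $G_n := M_n \oplus F$ is closed in the topological direct sum $E$; it is proper since $M_n \subsetneq M$, therefore nowhere dense. As $\bigcup_n G_n = M \oplus F = E$, the space $E$ is not quasi-Baire.

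For the reverse direction, suppose $E$ is barrelled with $E = \bigcup_n E_n$ for nowhere-dense increasing subspaces $E_n$. Replacing each $E_n$ by its closure and passing to a subsequence, I may assume the $E_n$ are closed and strictly increasing. I would then construct a biorthogonal system: inductively pick $x_n \in E_n \setminus E_{n-1}$ and, by Hahn--Banach applied to the closed proper subspace $E_{n-1}$, choose $f_n \in E'$ with $f_n(x_n) = 1$ and $f_n\vert_{E_{n-1}} = 0$. A Gram--Schmidt recursion then delivers $y_n \in E_n$ with $\mathrm{span}\{y_1,\dots,y_n\} = \mathrm{span}\{x_1,\dots,x_n\}$ and $f_i(y_j) = \delta_{ij}$ for all $i,j$.

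Let $M := \mathrm{span}\{y_n : n \in \N\}$ and equip $M$ with the finest locally convex topology, realizing it as a copy $M_\varphi \cong \varphi$. Define $P(x) := \sum_n f_n(x)\, y_n$. For any $x \in E$ there exists $m$ with $x \in E_m$, and then $x \in E_{n-1}$ for every $n > m$, forcing $f_n(x) = 0$; hence $P(x)$ is a finite sum, $P : E \to M$ is a well-defined linear map, and $P\vert_M = \mathrm{id}_M$ by biorthogonality. To prove continuity of $P$ as a map $E \to M_\varphi$, I would view it as the pointwise limit of the finite-rank (hence continuous) partial sums $P_N(x) := \sum_{n \le N} f_n(x)\, y_n$. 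For each fixed $x$, the sequence $(P_N(x))_N$ is eventually constant in $M_\varphi$, in particular contained in the finite-dimensional subspace $\mathrm{span}\{y_1,\dots,y_{m(x)}\}$ and thus bounded in $M_\varphi$. Barrelledness of $E$ together with the Banach--Steinhaus theorem then forces $\{P_N\}$ to be equicontinuous from $E$ to $M_\varphi$, so the pointwise limit $P : E \to M_\varphi$ is continuous. The continuous projection $P$ yields the topological decomposition $E = M_\varphi \oplus \ker P$, which exhibits the required complemented copy of $\varphi$ and also certifies that the subspace topology on $M$ coincides with the $\varphi$ topology.

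The main obstacle is precisely the continuity of $P$: one must target $M_\varphi$ rather than $M$ with the subspace topology from $E$. Against the coarser subspace target the partial sums $(P_N(x))$ would be bounded for a trivial reason that gives no structural information about a $\varphi$-summand. Targeting $M_\varphi$ is essential because in that topology bounded sets are precisely those confined to finite-dimensional subspaces, which is exactly the behavior the biorthogonal construction guarantees. Once Banach--Steinhaus is applied with this target, the topological direct sum decomposition and the identification of the subspace topology on $M$ with the finest locally convex topology follow in one stroke.
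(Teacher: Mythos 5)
The paper does not actually prove Theorem~\ref{complemented}: it imports it verbatim from Narayanaswami--Saxon \cite{Na-Saxon} and only uses it as a black box. So there is no internal proof to compare against; your proposal has to be judged on its own, and it holds up. It is, in essence, the classical argument for this result. The easy direction is fine: $\ker P$ is closed, a finite-dimensional subspace plus a closed subspace is closed, and a proper closed subspace of a TVS is nowhere dense, so $E=\bigcup_n(M_n\oplus F)$ witnesses the failure of quasi-Baireness. In the hard direction all the key points are in place: the closures of nowhere-dense subspaces are still proper (empty interior), so one may assume the $E_n$ closed and strictly increasing; Hahn--Banach separation from the closed subspace $E_{n-1}$ gives \emph{continuous} $f_n$ with the triangular vanishing $f_n|_{E_{n-1}}=0$, which is what makes both the Gram--Schmidt recursion and the local finiteness of $P(x)=\sum_n f_n(x)y_n$ work; and the Banach--Steinhaus step is exactly where barrelledness must enter. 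Your remark about the choice of target is the right one: pointwise boundedness of $(P_N(x))_N$ in $M_\varphi$ is the nontrivial content (it holds because each orbit is a finite set in a finite-dimensional subspace), and equicontinuity of $\{P_N\}$ plus pointwise convergence gives continuity of $P\colon E\to M_\varphi$, whence $E=M\oplus\ker P$ and the subspace topology on $M$ is the finest locally convex one. The only imprecision worth flagging is the phrase ``finite-rank (hence continuous)'': finite rank alone does not give continuity; you need that the coordinate functionals $f_n$ lie in $E'$, which your Hahn--Banach construction does guarantee, so this is a wording issue rather than a gap.
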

This theorem and Fact \ref{fact1} combined with Fact \ref{fact4} imply the next
\begin{corollary} \label{quasi}
Every barrelled GDS space is a quasi-Baire space.
\end{corollary}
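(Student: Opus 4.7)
The plan is to argue by contradiction, assembling the three named ingredients in sequence. Suppose $E$ is a barrelled GDS that fails to be quasi-Baire. I would first invoke the Narayanaswami--Saxon dichotomy (Theorem \ref{complemented}): a barrelled space that is not quasi-Baire must contain a complemented copy of $\varphi$. Complementation gives a continuous linear projection $P : E \to E$ whose range is (topologically isomorphic to) $\varphi$, and in particular yields a continuous linear surjection $T : E \twoheadrightarrow \varphi$.

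Next I would apply the hereditary property recorded in Fact \ref{fact1}: since $E$ is GDS and $T$ is a continuous linear surjection onto $\varphi$, the space $\varphi$ inherits the GDS property from $E$. But Fact \ref{fact4} asserts precisely that $\varphi$ is \emph{not} GDS (witnessed by the fact that the continuous identity $\varphi \to \ell_0$ would transport GDS to $\ell_0$, and the $\ell_1$-norm on $\ell_0$ is nowhere G\^ateaux differentiable by Fact \ref{fact2}). This contradiction forces $E$ to be quasi-Baire.

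There is essentially no obstacle here beyond lining up the hypotheses: the work has already been done in Theorem \ref{complemented}, Fact \ref{fact1}, and Fact \ref{fact4}. The only point worth double-checking is that the \emph{complemented} copy produced by Narayanaswami--Saxon genuinely supplies a continuous linear surjection onto $\varphi$ (so that Fact \ref{fact1} applies), which is immediate from the existence of the associated continuous projection. Hence the corollary follows in a few lines.
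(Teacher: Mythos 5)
Your argument is exactly the one the paper intends: contradiction via Theorem \ref{complemented} to extract a complemented copy of $\varphi$, hence a continuous linear surjection onto $\varphi$, then Fact \ref{fact1} and Fact \ref{fact4} to conclude. The proposal is correct and matches the paper's proof.
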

Applying \cite[Theorem 4]{Kakol-Saxon-Todd}, which states that no space $C_k(X)$ can be covered by an increasing sequence of closed proper vector subspaces, we derive a stronger fact for spaces $C_k(X)$.
\begin{corollary}\label{quasi1}
Every barrelled space $C_k(X)$ is quasi-Baire.
\end{corollary}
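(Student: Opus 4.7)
The plan is straightforward: combine the very definition of quasi-Baire with the already-cited theorem of K\c akol--Saxon--Todd. Start by assuming $C_k(X)$ is barrelled, so only the covering condition needs to be addressed. Suppose for contradiction that $C_k(X)$ is not quasi-Baire. Then by definition there exists an increasing sequence $(L_n)_{n\in\N}$ of nowhere-dense linear subspaces of $C_k(X)$ whose union is all of $C_k(X)$.

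The key observation that bridges the two results is a standard topological vector space fact: a linear subspace $L$ of a Hausdorff topological vector space is nowhere dense if and only if its closure $\overline{L}$ is a proper subspace. Indeed, $\overline{L}$ is itself a linear subspace, and any linear subspace of a topological vector space with nonempty interior must contain $0$ as an interior point, hence equal the whole space; so $\overline{L}$ having empty interior is equivalent to $\overline{L}$ being proper. Applying this to each $L_n$, the closures $\overline{L_n}$ form an increasing sequence of closed proper linear subspaces of $C_k(X)$, and they still cover $C_k(X)$ since $C_k(X)=\bigcup_n L_n\subseteq \bigcup_n\overline{L_n}$.

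This directly contradicts \cite[Theorem 4]{Kakol-Saxon-Todd}, which asserts that no space of the form $C_k(X)$ can be written as an increasing countable union of closed proper vector subspaces. Therefore no such sequence $(L_n)$ exists, and $C_k(X)$ is quasi-Baire. There is no real obstacle here; the only point requiring any care is the elementary equivalence between nowhere-density of a linear subspace and properness of its closure, which guarantees that the covering by nowhere-dense subspaces produced by the negation of quasi-Baireness can be upgraded to a covering by closed proper subspaces, matching exactly the hypothesis of the K\c akol--Saxon--Todd theorem.
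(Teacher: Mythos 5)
Your proof is correct and follows exactly the route the paper takes: the corollary is derived directly from \cite[Theorem 4]{Kakol-Saxon-Todd} via the standard observation that a nowhere-dense linear subspace has a proper closed linear span, so a covering by increasing nowhere-dense subspaces upgrades to one by closed proper subspaces. The paper leaves this bridging step implicit; you have simply spelled it out.
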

Next corollary uses both Facts \ref{fact1} and \ref{fact4} to provide a necessary condition for barrelled lcs to be GDS.
\begin{corollary}\label{fourth}
Let $E$ be an infinite-dimensional   barrelled space and let $\{x_j:j\in B\}$ be a Hamel basis of $E$, and  let $\{(x_j, f_j): j\in B\}$ be  the corresponding
biorthogonal system. If $E$ is  GDS, then the set $C=\{j\in B: f_j\in E'\}$ is finite.
\end{corollary}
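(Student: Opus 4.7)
The plan is to argue by contradiction: assume $C$ is infinite and produce a complemented copy of $\varphi$ inside $E$. Since $E$ is barrelled, Theorem \ref{complemented} will then say $E$ is not quasi-Baire, and the contrapositive of Corollary \ref{quasi} (every barrelled GDS space is quasi-Baire) will yield the contradiction that $E$ is not GDS.

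First I would select a countable set of indices $\{j_n\}_{n\in\N}\subset C$ and abbreviate $y_n := x_{j_n}$ and $g_n := f_{j_n}\in E'$, so that $g_n(y_m)=\delta_{nm}$. Identifying $\varphi$ with the space $\ell_0$ of finitely supported sequences under its finest locally convex topology, I introduce two linear maps. The map $S\colon \varphi\to E$ sending $e_n\mapsto y_n$ is automatically continuous, since every linear map out of $\varphi$ is continuous. The map $T\colon E\to \varphi$ defined by $T(x):=(g_n(x))_n$ actually takes values in $\ell_0$: $g_n(x)=f_{j_n}(x)$ is the $j_n$-coordinate of $x$ with respect to the Hamel basis $\{x_j\}_{j\in B}$, and since the Hamel expansion of $x$ is finite, $g_n(x)$ is nonzero for only finitely many $n$. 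Biorthogonality gives $T\circ S=\mathrm{id}_\varphi$, so once $T$ is shown to be continuous, $F:=S(\varphi)=\mathrm{span}\{y_n:n\in\N\}$ becomes, with its inherited topology, topologically isomorphic to $\varphi$ via $S$ (with inverse $T|_F$), and $S\circ T\colon E\to F$ is the continuous projection exhibiting a complemented copy of $\varphi$ in $E$.

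The main obstacle is proving continuity of $T$. Every seminorm on $\varphi$ is dominated by one of the form $p_a(\xi):=\sum_n a_n|\xi_n|$ for some nonnegative sequence $(a_n)$ (take $a_n=p(e_n)$ and use subadditivity), so it suffices to check that each $p_a\circ T$ is continuous on $E$. Now $p_a\circ T(x)=\sum_n a_n|g_n(x)|$ is a seminorm on $E$ realized as the pointwise supremum of the continuous seminorms $q_N(x):=\sum_{n\le N}a_n|g_n(x)|$, and is therefore lower semicontinuous. At this point barrelledness enters: the closed unit ball of $p_a\circ T$ is absolutely convex, absorbing, and (by lower semicontinuity) closed, hence a barrel and therefore a $0$-neighbourhood in $E$, so $p_a\circ T$ is continuous. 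This establishes continuity of $T$ and completes the construction of the complemented copy of $\varphi$; applying Theorem \ref{complemented} and the contrapositive of Corollary \ref{quasi} then gives the desired contradiction, so $C$ must be finite.
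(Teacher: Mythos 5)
Your argument is correct, but it reaches the complemented copy of $\varphi$ by a genuinely different route than the paper. The paper's proof works with the \emph{entire} (possibly uncountable-dimensional) subspace $F=lin\{x_j: j\in C\}$: it invokes \cite[Theorem 2]{Saxon-Robertson} to get that $lin\{x_j: j\in C\}\oplus lin\{x_j: j\in D\}=E$ is a topological decomposition with $F$ carrying the finest locally convex topology, and then uses the fact that every subspace of such an $F$ is complemented in $F$; the contradiction is then drawn directly from Facts \ref{fact1} and \ref{fact4}. You instead extract only a countable subfamily of $C$ and construct the projection by hand, proving continuity of the coordinate map $T(x)=(g_n(x))_n$ via the observation that each seminorm $\sum_n a_n|g_n(\cdot)|$ is a lower semicontinuous seminorm whose unit ball is a barrel, hence a $0$-neighbourhood by barrelledness. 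This is in effect a self-contained proof of the countable-dimensional case of the Saxon--Robertson result, and it has the virtue of making completely explicit where barrelledness enters; the paper's version is shorter given the citation and yields the stronger structural statement about all of $lin\{x_j:j\in C\}$. Your final step is also slightly more roundabout than necessary: once you have the continuous projection $S\circ T$ onto a copy of $\varphi$, the composition $T$ is already a continuous linear surjection of $E$ onto $\varphi$, so Facts \ref{fact1} and \ref{fact4} give the contradiction immediately, without passing through Theorem \ref{complemented} and Corollary \ref{quasi} (which is how the paper concludes). Both endgames are valid, since Corollary \ref{quasi} is itself derived from exactly those two facts.
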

\begin{proof}
Let $D=\{j\in B: f_j\notin E'\}$. Then $lin\{x_j: j\in C\}\oplus lin\{x_j: j\in D\} = E$ (topologically), and $F=lin\{x_j: j\in C\}$ carries  the finest locally  convex  topology \cite[Theorem 2]{Saxon-Robertson}. If $C$  is infinite, then $F$  contains a copy  of $\varphi$. On the other hand, every vector subspace of a lcs $F$ carrying the finest locally convex topology is complemented in $F$ (since any linear surjection between lcs  endowed with  the finest  locally convex topologies is continuous).  Hence $\varphi$ would be a  complemented subspace  in the whole space $E$, what is impossible  provided $E$ is GDS (by applying Facts \ref{fact1} and \ref{fact4}). Hence $C$ must be finite.
\end{proof}
Last Corollary \ref{fourth} combined with Theorem \ref{FFF} implies next

\begin{corollary}
Let $X$ be a metrizable space whose every compact subset  is  scattered. Let $\{x_j:j\in B\}$ be a Hamel basis of $C_k(X)$, and  let $\{(x_j, f_j): j\in B\}$ be  the corresponding
biorthogonal system. Then  $\{j\in B: f_j\in E'\}$ is finite.
\end{corollary}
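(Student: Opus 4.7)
The plan is to apply Corollary \ref{fourth} with $E = C_k(X)$, so I need to verify that $C_k(X)$ is (a) infinite-dimensional, (b) barrelled, and (c) GDS, and then read off the conclusion. The case $X$ finite is trivial (the Hamel basis itself is finite), so I may assume $X$ is infinite, which gives (a).

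For (b), I would invoke the Nachbin--Shirota theorem: $C_k(X)$ is barrelled if and only if $X$ is a $\mu$-space, i.e., every functionally bounded subset of $X$ has compact closure. Since $X$ is metrizable, a subset on which every continuous real-valued function is bounded must have compact closure (totally bounded plus closed in a metric space, using the separation provided by the metric to rule out discrete unbounded sequences). Hence $X$ is a $\mu$-space and $C_k(X)$ is barrelled. This is the one step of the argument that is not an immediate reference, but it is classical.

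For (c), I would use Theorem \ref{FFF}: since every compact subset of $X$ is scattered, $C_k(X)$ is Asplund. Fr\'echet differentiability implies G\^ateaux differentiability, so a dense $G_\delta$ set of Fr\'echet differentiability is in particular a dense set of G\^ateaux differentiability. Therefore $C_k(X)$ is GDS.

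Having confirmed (a), (b), (c), Corollary \ref{fourth} applies verbatim to $E = C_k(X)$: for any Hamel basis $\{x_j : j \in B\}$ with biorthogonal system $\{(x_j, f_j) : j \in B\}$, the set $\{j \in B : f_j \in C_k(X)'\}$ is finite. The only genuine subtlety is the barrelledness step; everything else is direct substitution into results already cited in the paper.
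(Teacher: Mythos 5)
Your argument is correct and is exactly the route the paper intends: apply Corollary \ref{fourth} to $E=C_k(X)$, with barrelledness coming from the fact that every metrizable space is a $\mu$-space (as the paper itself notes in the proof of Corollary \ref{P2}, citing Nachbin--Shirota) and the GDS property coming from Theorem \ref{FFF} plus the trivial implication Asplund $\Rightarrow$ GDS. The only quibble is your parenthetical justification that a functionally bounded subset of a metric space has compact closure --- ``totally bounded plus closed'' is not the right reason (that does not give compactness without completeness); the correct argument is that a functionally bounded set is relatively sequentially compact, since a closed discrete sequence of distinct points would carry an unbounded continuous function by the Tietze extension theorem --- but the fact itself is classical and is precisely the one the paper invokes.
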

Recall  the following concepts  which will be used in the sequel, see \cite{Na-Saxon} and \cite{PB}.
\begin{enumerate}
\item Let $(E_{n},\tau_{n})_{n}$ be an increasing  sequence of lcs  such that $\tau_{n+1}|E_n\leq \tau_{n}$ for all $n\in\omega.$
\item If $E=\bigcup_{n}E_n$, then $E$ with  the finest l.c topology $\tau$ on $E$ such that $\tau|E_n\leq\tau_n$ for all $n\in\omega$ is called the inductive limit of the above sequence.
\item If  each $(E_n,\tau_n)$ is a Fr\'echet (Banach) space,  $E_{n}\varsubsetneq E_{n+1}$, the  limit is called an $(LF)$-space ($(LB)$-space).
\item If further, for each $n$ we have $\tau_{n+1}|E_n=\tau_n$,  the limit is called strict.
\item No strict $(LB)$-space is metrizable.
\item $E$ is an  $(LF)_{2}$-space if $E$ is a non-metrizable $(LF)$-space whose each member from its defining sequence is a dense subspace of $E$.
\end{enumerate}
Recall that an $(LF)$-space $E$ is an $(LF)_{2}$-space if and only if $E$ is quasi-Baire but not Baire-like, see \cite[Theorem 4]{Na-Saxon}.

Note that the spaces $D^{m}(\Omega)$,  $D(\Omega)$ and  $D(\mathbb{R}^{\omega})$ are strict $(LF)$-spaces, see \cite{Bierstedt}.  Theorem \ref{complemented} with Fact \ref{fact4} yield  a large class of lcs without GDS.
\begin{corollary}\label{LF}
No infinite-dimensional strict $(LF)$-space is GDS.
\end{corollary}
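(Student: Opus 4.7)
The plan is to combine Theorem \ref{complemented} with Facts \ref{fact1} and \ref{fact4}, exactly as the paragraph preceding the corollary hints. Let $E = \bigcup_{n} E_n$ be an infinite-dimensional strict $(LF)$-space, with defining sequence of Fr\'echet spaces $E_n \varsubsetneq E_{n+1}$ and $\tau_{n+1}|E_n=\tau_n$. My goal is to verify the hypotheses of Theorem \ref{complemented}, i.e., that $E$ is barrelled and not quasi-Baire, and then transport the failure of GDS from $\varphi$ to $E$ via a continuous linear projection.

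First I would observe that $E$ is barrelled, because inductive limits of barrelled spaces are barrelled, and every Fr\'echet space is barrelled. Next I would argue that $E$ is not quasi-Baire. Since the limit is strict, each $E_n$ is closed in every $E_m$ with $m\geq n$, and therefore $E_n$ is closed in $E$ (a set in an $(LF)$-space is closed iff its intersection with each $E_m$ is closed). Since $E_n\varsubsetneq E_{n+1}\subset E$ and $E_n$ is a proper closed vector subspace of a topological vector space, $E_n$ has empty interior, hence is nowhere dense in $E$. Thus $E=\bigcup_n E_n$ expresses $E$ as the union of an increasing sequence of nowhere dense linear subspaces, showing that $E$ fails to be quasi-Baire.

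Invoking Theorem \ref{complemented}, I obtain a topological direct sum decomposition $E = \varphi \oplus F$ for some closed subspace $F$, and in particular a continuous linear projection $\pi : E \twoheadrightarrow \varphi$. Now suppose, for contradiction, that $E$ is GDS. Then by Fact \ref{fact1}, applied to the surjection $\pi$, the quotient $\varphi$ would also be GDS, contradicting Fact \ref{fact4}. Therefore $E$ cannot be a GDS space, which is what we wanted.

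I do not foresee a genuine obstacle; the work is essentially a bookkeeping assembly of results already collected in this section. The only point that requires a small justification (rather than citation) is that the members $E_n$ of the defining sequence are closed and proper in $E$, so as to conclude that $E$ fails quasi-Baireness. Everything else is a direct invocation of Theorem \ref{complemented}, Fact \ref{fact1}, and Fact \ref{fact4}.
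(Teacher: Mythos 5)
Your proof is correct and follows essentially the paper's own route: verify that a strict $(LF)$-space is barrelled but not quasi-Baire, extract a complemented copy of $\varphi$ via Theorem \ref{complemented}, and conclude with Facts \ref{fact1} and \ref{fact4}; you merely fill in details the paper leaves implicit. One small caveat: your parenthetical ``a set in an $(LF)$-space is closed iff its intersection with each $E_m$ is closed'' is not valid for general locally convex inductive limits (that characterizes the finer topological direct-limit topology); the closedness of each $E_n$ in $E$ should instead be justified by the Dieudonn\'e--Schwartz fact that a strict inductive limit induces $\tau_n$ on $E_n$, so the complete Fr\'echet subspace $E_n$ is closed in the Hausdorff space $E$.
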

Recall that a topological space $X$ is a $\mu$-space if the closure of every functionally bounded subset $A$  of $X$ is compact, where $A$ is functionally bounded if every continuous real-valued function on $X$ is bounded on $A$, see \cite[Definition II.6.1]{Schmets} or \cite[Definition 10.1.18]{PB}.
\begin{corollary}\label{P2}
If $X$ is metrizable, then $C_k(X)$ does not contain a complemented copy of $\varphi$. If $X$ is metrizable but  not locally compact,  $C_k(X)$ contains a copy of $\varphi$.
\end{corollary}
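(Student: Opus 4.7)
The plan is to handle the two assertions separately. For the first, the strategy is to show that $C_k(X)$ is quasi-Baire and then invoke Theorem~\ref{complemented} to rule out a complemented copy of $\varphi$. It is standard that every metrizable space is a $\mu$-space, hence by the Nachbin--Shirota theorem $C_k(X)$ is barrelled; Corollary~\ref{quasi1} then delivers the quasi-Baire property. A slicker alternative, which remains valid for every Tychonoff $X$ and does not use metrizability at all, is to argue directly from \cite[Theorem 4]{Kakol-Saxon-Todd} already recalled: if $\pi\colon C_k(X)\to\varphi$ were a continuous linear projection onto a complemented copy, then writing $\varphi=\bigcup_n\varphi_n$ with $\varphi_n=\mathrm{span}\{e_1,\ldots,e_n\}$ closed and proper would yield the forbidden cover $C_k(X)=\bigcup_n\pi^{-1}(\varphi_n)$ by an increasing sequence of closed proper subspaces.

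For the second assertion, the plan is to exhibit a topological copy of $\varphi$ by reducing to the closed copy known to live in $C_k(\Q)$ by Corollary~\ref{closed}. Using that $X$ is metric and not locally compact, I first locate a point $x_0\in X$ with no compact neighborhood; every ball $B(x_0,r)$ then has non-compact closure and hence contains a sequence without convergent subsequence, producing a closed discrete infinite subset of $X$. The next step is to upgrade this to a closed countable dense-in-itself subspace $Y\subset X$; by Sierpi\'nski's classical theorem such $Y$ is homeomorphic to $\Q$. Since $Y$ is closed in the metric space $X$, a Dugundji-type simultaneous extension theorem supplies a continuous linear right inverse $e\colon C_k(Y)\to C_k(X)$ to the restriction map, embedding $C_k(\Q)\cong C_k(Y)$ topologically into $C_k(X)$; combined with Corollary~\ref{closed} this places the required copy of $\varphi$ inside $C_k(X)$.

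The principal obstacle lies in extracting the closed subspace $Y\cong\Q$. When $X$ is Polish and not locally compact (e.g.\ the Baire space $\N^{\N}$), every closed countable subspace is itself Polish and hence scattered by Cantor--Bendixson, so no closed copy of $\Q$ is available. In such exceptional cases the extension argument above fails and one must resort to a direct construction inside $C_k(X)$: the goal is to choose linearly independent $f_n\in C_k(X)$ so that the family $\{(p_K(f_n))_{n\in\N}:K\in\mathcal{K}(X)\}$ is cofinal in $\R^{\N}$ under pointwise domination, which---via a standard argument with the biorthogonal system $(f_n,\delta_{y_n})$ for suitably chosen $y_n$---guarantees that every linear functional on $\mathrm{span}\{f_n\}$ is continuous in the subspace topology, so that this span is linearly homeomorphic to $\varphi$. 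Securing this cofinality, using the abundance of compact subsets near a non-locally-compact point, is the technical heart of the argument.
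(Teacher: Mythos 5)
Your treatment of the first assertion is correct and follows the paper's route exactly (metrizable $\Rightarrow$ $\mu$-space $\Rightarrow$ $C_k(X)$ barrelled $\Rightarrow$ quasi-Baire by Corollary~\ref{quasi1} $\Rightarrow$ no complemented $\varphi$ by Theorem~\ref{complemented}); your alternative argument, pulling back the increasing closed proper subspaces $\varphi_n$ under a hypothetical projection and contradicting \cite[Theorem 4]{Kakol-Saxon-Todd} directly, is also valid and indeed works for arbitrary Tychonoff $X$.

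The second assertion is where the proposal has a genuine gap. Your main plan --- find a closed copy $Y\cong\Q$ in $X$ and transplant the copy of $\varphi$ from $C_k(\Q)$ via a Dugundji extender --- fails in precisely the central examples: if $X$ is completely metrizable (e.g.\ $\N^{\N}$, or any infinite-dimensional separable Banach space), every closed countable subspace is Polish, hence Baire, hence scattered, and no closed copy of $\Q$ exists. You acknowledge this, but the fallback you offer for that case is only a programme, and you explicitly leave its ``technical heart'' (the cofinality of the family $\{(p_K(f_n))_n\}$) unestablished. Moreover the sufficient condition you state is doubtful as written: for the span of $\{f_n\}$ to carry the finest locally convex topology you must bound an arbitrary seminorm $\sum_n c_n|a_n|$ \emph{from above} by some $p_K(\sum_n a_nf_n)$, whereas the triangle inequality only gives $p_K(\sum_n a_nf_n)\le\sum_n|a_n|\,p_K(f_n)$ --- the wrong direction --- so largeness of the numbers $p_K(f_n)$ does not by itself control $p_K$ on linear combinations from below. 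The paper avoids all of this with two citations: for metrizable non-locally-compact $X$ the barrelled space $C_k(X)$ is not Baire-like \cite[Proposition 2.19]{KKP}, and a barrelled space that is not Baire-like contains a copy of $\varphi$ \cite[Corollary 2.4]{KKP}. That second result (essentially Saxon's theorem) is exactly the nontrivial construction your sketch is trying to reprove; without it, or a completed direct construction, your proof of the second assertion is incomplete.
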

\begin{proof}
Since each metrizable space $X$ is a $\mu$-space,  $C_k(X)$ is barrelled, see \cite[Theorem 10.1.20]{PB}. Then   $C_k(X)$ is quasi-Baire by Corollary \ref{quasi1}. Now we apply Theorem \ref{complemented} to conclude that $C_k(X)$ does not contain a complemented copy of $\varphi$. On the other hand, if additionally $X$ is not locally compact, $C_k(X)$ is not Baire-like, see \cite[Proposition 2.19]{KKP}. Hence,  by \cite[Corollary 2.4]{KKP},   $C_k(X)$ contains a copy of $\varphi$.
\end{proof}
Last Corollary \ref{P2} yields  the following next consequence which  provides a negative answer on  Problem \ref{P1} (both 2 and 3) for non-Banach spaces $E$. 

\begin{corollary}\label{closed}
The space $C_{k}(Q)$ contains a copy  of $\varphi$  but does not contain any complemented copy of $\varphi$. Hence the Aspund space $C_k(Q)$ contains a closed vector subspace (isomorphic to $\varphi$) which is  not GDS.
\end{corollary}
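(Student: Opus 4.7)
The plan is to assemble the corollary directly from the preceding results, with $X = Q$ as the test case. First I would observe that $Q$ (the rationals) is metrizable as a subspace of $\R$, but it fails to be locally compact (no rational point admits a compact neighborhood in $Q$). This places $Q$ in the exact hypothesis of Corollary \ref{P2}, which immediately yields both halves of the first sentence: $C_k(Q)$ contains a copy of $\varphi$, and $C_k(Q)$ contains no complemented copy of $\varphi$.

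Second, I would verify that $C_k(Q)$ is Asplund via Theorem \ref{FFF}. The point is that $Q$ is countable, so every compact subset $K \subset Q$ is a countable compact Hausdorff (hence metrizable) space. Such a space is automatically scattered: any nonempty subset $A$, being countable and compact in the subspace topology of a metrizable space, satisfies the Baire category theorem and therefore must contain an isolated point (otherwise $A$ would be the countable union of the nowhere dense singletons $\{a\}$). Hence every compact subset of $Q$ is scattered, and Theorem \ref{FFF} gives that $C_k(Q)$ is Asplund.

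Third, for the ``Hence'' clause I need the copy of $\varphi$ sitting inside $C_k(Q)$ to be \emph{closed}, not merely a subspace. This follows from completeness: $\varphi$ is the strict inductive limit of the finite-dimensional Banach spaces $\R^n$, and strict inductive limits of complete lcs are complete. A complete vector subspace of any Hausdorff topological vector space is automatically closed. Thus any topological-linear copy of $\varphi$ inside $C_k(Q)$ is a closed subspace. Combining this with Fact \ref{fact4}, which asserts that $\varphi$ is not GDS, gives the desired closed subspace of the Asplund space $C_k(Q)$ that fails to be GDS.

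There is no genuine obstacle here; the work has been done in Corollary \ref{P2}, Theorem \ref{FFF}, and Fact \ref{fact4}, and the only substantive verifications are the elementary ones that $Q$ is metrizable and not locally compact, that every compact subset of $Q$ is scattered (equivalently countable), and that $\varphi$ is complete so that its embedded image is automatically closed. The role of this corollary is to knit these facts into a single concrete counterexample answering Problem \ref{P1}(2) and (3) negatively in the non-Banach setting.
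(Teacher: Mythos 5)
Your proposal is correct and follows essentially the same route as the paper, which derives the corollary directly from Corollary \ref{P2} (with $X=Q$ metrizable and not locally compact), Theorem \ref{FFF} (every compact subset of the countable space $Q$ is scattered), and Fact \ref{fact4}. The only detail you add beyond what the paper states is the explicit verification that the embedded copy of $\varphi$ is closed because $\varphi$ is complete; the paper leaves this implicit, and your justification is sound.
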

The space $C_k(J)$ over irrationals $J$ is not Asplund (use Theorem \ref{FFF}) but is weak Asplund, since $C_k(J)$ is separable and Corollary \ref{third} applies. Moreover,  $C_k(J)$  contains a copy of $\varphi$ by Corollary \ref{P2}.

Since every separable Baire space is weak Asplund and  every barrelled weak Asplund lcs is quasi-Baire (see  Corollary \ref{quasi}), and  every separable $C_k(X)$ is weak Asplund (see Corollary \ref{third}), the following question may arise.
\begin{problem}
Is every separable quasi-Baire space  weak Asplund?
\end{problem}

\section{Free locally convex space $L(X)$ is not GDS} \label{GDS}
\bigskip
\begin{definition}
Let $X$ be a Tychonoff space. The {\em  free lcs} $L(X)$ on  $X$ is a pair consisting of a lcs $L(X)$ and  a continuous mapping $i: X\to L(X)$ such that
 every  continuous mapping $f$ from $X$ to a lcs $E$ gives rise to a unique continuous linear operator $\widehat f: L(X) \to E$  with $f=\widehat f \circ i$.
\end{definition}

The free lcs $L(X)$  always exists and is  unique. The set $X$ forms a Hamel basis for $L(X)$, and  the mapping $i: X \rightarrow L(X)$, $x\mapsto \delta_{x}$, is a topological embedding.
For simplicity, we will identify $X$ with $i(X)$.

Recall that $L(X)_w$ is Asplund for any $X$ by Theorem \ref{weak}.
We are at the position to prove the following general statement for any $L(X)$, one of the main results of our paper.
For every subspace $Y$ of $X$ we denote by $L(Y, X)$ the linear subspace of $L(X)$ generated by all elements of $Y$.
In general $L(Y, X)$ is not topologically isomorphic to $L(Y)$. Isomorphism holds if and only if every continuous pseudometric on $Y$ extends to
a continuous pseudometric on $X$ \cite{Uspenskii}.

\begin{theorem} \label{Theorem1}
Let $Y$ be an infinite subset of a Tychonoff space $X$. Then $L(Y, X)$ is not a GDS space.
In particular, $L(X)$ is not a GDS space for any infinite Tychonoff space $X$.
\end{theorem}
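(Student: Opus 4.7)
The plan is to construct a continuous linear surjection $T: L(Y,X) \twoheadrightarrow \ell_0$, where $\ell_0$ carries the norm topology inherited from $\ell_1$. Once such $T$ is produced, Fact \ref{fact2} (the $\ell_1$-norm on $\ell_0$ is nowhere G\^ateaux differentiable, hence $\ell_0$ is not GDS) combined with Fact \ref{fact1} (GDS is preserved by continuous linear surjections) immediately gives that $L(Y,X)$ is not GDS, which is the conclusion.

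I will build $T$ via the universal property of $L(X)$: it suffices to exhibit a continuous map $f: X \to (\ell_1, \|\cdot\|_1)$ with $f(X) \subseteq \ell_0$ and $\mathrm{span}\{f(y): y \in Y\} = \ell_0$, for then $\widehat f: L(X) \to \ell_1$ has image in $\ell_0$ and its restriction to $L(Y,X)$ is the desired surjection. The construction begins by extracting a countably infinite discrete subspace $\{y_n: n \in \N\} \subseteq Y$ (every infinite Hausdorff space contains one, a standard ZFC fact), so that $y_n \notin \overline{\{y_m: m \neq n\}}$ for every $n$. Since $X$ is Tychonoff, choose continuous $f_n: X \to [0,1]$ with $f_n(y_n) = 1$ and $f_n \equiv 0$ on the closed set $\overline{\{y_m: m \neq n\}}$, and set
\[
U_n := \bigl\{x \in X: f_n(x) > 2/3 \text{ and } f_m(x) < 1/3 \text{ for all } m < n\bigr\};
\]
these sets are open (finite intersections), contain $y_n$, and are pairwise disjoint (if $x \in U_n \cap U_k$ with $n < k$, then $f_n(x) > 2/3$ from $U_n$ contradicts $f_n(x) < 1/3$ from $U_k$). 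Again by Tychonoff, pick continuous $h_n: X \to [0,1]$ with $h_n(y_n) = 1$ and $h_n \equiv 0$ on $X \setminus U_n$, and define $f(x) := \sum_{n=1}^{\infty} (h_n(x)/n)\, e_n$.

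Because the $U_n$'s are pairwise disjoint, at most one summand is nonzero at any $x \in X$, so $f(x) \in \ell_0$, and $f(y_n) = e_n/n$ yields $\mathrm{span}\{f(y_n): n \in \N\} = \ell_0$. Continuity of $f$ into $(\ell_1, \|\cdot\|_1)$ is immediate at points of some $U_m$, where $f$ locally coincides with the continuous $(h_m/m) e_m$; at a point $x_0 \notin \bigcup_n U_n$, given $\epsilon > 0$, the set $W := \bigcap_{n \leq 1/\epsilon}\{x \in X: h_n(x) < n\epsilon\}$ is an open neighborhood of $x_0$ (since $h_n(x_0) = 0$ for each such $n$) on which $\|f(x)\|_1 < \epsilon$, the tail $n > 1/\epsilon$ being absorbed by $1/n < \epsilon$. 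The main technical obstacle is the initial topological extraction of the discrete subspace of $Y$ together with the careful choice of the $U_n$'s; once these are in place, the universal property of $L(X)$ and Facts \ref{fact1}--\ref{fact2} close the argument.
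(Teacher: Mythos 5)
Your proof is correct and follows essentially the same route as the paper's: both construct a continuous map $f:X\to\ell_0$ supported on a pairwise disjoint family of open sets meeting $Y$, with $f(y_n)=n^{-1}e_n$, extend it to $\widehat f:L(X)\to\ell_0$ by the universal property, observe that $\widehat f$ restricted to $L(Y,X)$ is onto $\ell_0$, and conclude via Facts \ref{fact1} and \ref{fact2}. The only difference is that you spell out the extraction of the disjoint open sets $U_n$ (via an infinite discrete subset of $Y$ and Tychonoff separation), which the paper simply asserts from Hausdorffness.
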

\begin{proof}
Since $X$ is Hausdorff and $Y\subset X$ is infinite, there is a sequence of disjoint open sets $\{U_n: n\in\omega\}$ in $X$
such that $U_n\cap Y \neq \emptyset$ for each $n\in \omega$.
Pick a point $y_n \in U_n\cap Y$ and fix continuous functions $\alpha_n: X \to [0, n^{-1}]$ such that $\supp(\alpha_n) \subset U_n$ and $\alpha_n (y_n) = n^{-1}$.
Now we define a continuous mapping $f: X \to \ell_0$ as follows:

\begin{equation}
f(x)=
    \begin{cases}
        \alpha_n(x)\,e_n & \text{, if } x \in U_n\\
        \mO & \text{, if } x \in X \setminus \bigcup_{n\in\omega} U_n
    \end{cases}
\end{equation}

It is clear that $f$ is continuous at every point $x \in U_n$.
We show that $f$ is continuous at every point $x_0 \in X \setminus \bigcup_{n\in\omega} U_n$. By definition, $f(x_0)= \mO$. Let $\epsilon > 0$.
There is $n\in \omega$ such that $n^{-1} < \epsilon$. Denote by $V_i = \{x\in X : \alpha_i (x) < \epsilon \}$. Then $V= \bigcap_{i < n} V_i$ is open, $x_0\in V$ and $||f(x)|| < \epsilon$
for every $x\in V$.

A continuous mapping $f$ by definition can be extended to a linear continuous operator
 $\widehat f$ from $L(X)$ to $\ell_0$.
It is easy to see that $\widehat f$ maps $L(Y, X)$ onto $\ell_0$, hence $L(Y, X)$ is not a GDS, by Fact \ref{fact1} and Fact \ref{fact2}.
\end{proof}

The next question arises naturally.
\begin{problem}\label{L}
Let $X$ be an infinite Tychonoff space. Does $L(X)$ contain an infinite-dimensional GDS subspace?
\end{problem}

It is known that the free lcs  $L(\omega)$ (where $\omega$ is endowed with the discrete topology) is isomorphic to the space $\varphi$ by applying Raikov's \cite[Theorem 1']{Raikov}.

The study around  the space  $L(\omega)=\varphi$ has attracted attention of several researchers from a long time.  For example, Nyikos observed \cite{nyikos}  that each sequentially  closed subset of $L(\omega)$ is closed although the sequential closure of a subset of $\varphi$ need not be closed. The  Baire category theorem applies to show that $L(\omega)$ is not a Baire-like space,  and  a barrelled  lcs $E$ is Baire-like if $E$ does not contain a  copy of $L(\omega)$, see \cite[Corollary 2.4]{KKP}.

Although $L(\omega)$ is not Fr\'echet-Urysohn, it provides some extra properties since all vector subspaces in $L(\omega)$ are closed.
Let us mention only, that in \cite{kakol-saxon} K\c akol and Saxon introduced  the property for a lcs $E$ (under the name $C_{3}^{-}$) stating that the sequential closure of every linear subspace of $E$ is sequentially closed, and we proved
\cite[Corollary 6.4]{kakol-saxon} that the only infinite-dimensional Montel (DF)-space with property  $C_{3}^{-}$ is $L(\omega)$. This implies that  barrelled $(DF)$-spaces and $(LF)$-spaces satisfying  property $C_{3}^{-}$ are exactly  of the form $M$, $L(\omega)$, or
$M\times L(\omega)$ where $M$ is metrizable, \cite[Theorems 6.11, 6.13]{kakol-saxon}.

The definition of $L(X)$ implies that the dual space $L(X)'$ of $L(X)$ is linearly isomorphic to the space $C(X)$.
 In fact, every $f\in C(X)$ can be uniquely extended from $X$ to $L(X)$ as follows: $$\widehat f(\xi)=a_{1}f(x_{1})+\dots + a_{n}f(x_{n})\,\,\, \text{for} \,\,\,\xi=a_1x_1+\dots +a_nx_n\in L(X).$$ Note also that the inverse operator $H:L(X)'\rightarrow C(X)$ to the extension operator  $f\mapsto \widehat f$ is  the restriction operator $H: \xi\mapsto \xi\restriction_X,$ where $\xi\in L(X)'$.  Via the pairing $(L(X)',L(X))=(C(X),L(X))$ we observe that
$$C_p(X)'_{w^{*}}=L(X)_{w}, \,\,\,  C_p(X)=L(X)'_{w^{*}},$$ where $E'_{w^{*}}=(E',\sigma(E',E))$.

Further, we study the strong dual $L(X)'_{\beta}$ of $L(X)$.
Following  \cite[Definition II.6.1]{Schmets} by $\mu X$ we  denote the smallest $\mu$-subspace
of $\beta X$ containing $X$, see also \cite{BG}. It is known  that $\mu X$ is equal to the intersection of all $\mu$-subspaces of $\beta X$ that contain $X$.
By \cite[Proposition 3.1, Theorem 3.3]{BG}   the space  $C_k(\mu X)$ is isomorphic to the strong dual $L(X)'_{\beta}$ if and only if $L(X)'_{\beta}$ is barrelled.
 This means that the space $L(X)$ is distinguished according to the notation from \cite[8.7.1]{Kothe}. However, by  \cite[Corollary 12]{FK} for a Tychonoff space $X$ the space $C_p(X)$ is distinguished if and only if the strong dual of $C_p(X)$ is barrelled if and only if the strong dual of $C_p(X)$ carries the finest locally convex topology.  On the other hand, for each Tychonoff space $X$ the strong dual of $C_p(X)$ lucks infinite-dimensional bounded sets, i.e bounded sets $B$ such that linear span of $B$ is an infinite-dimensional vector space, see \cite[p. 392]{FKS}. It turns out that for the spaces $L(X)$ this latter  characterization fails in general.
Indeed, using Theorem \ref{FFF} we have the following
\begin{corollary}\label{consequence0}
For an infinite $\mu$-space $X$ the following  assertions  are true.
\begin{enumerate}
\item  The strong dual $L(X)'_{\beta}$ of $L(X)$ is Asplund if and only if every compact subset of $X$ is scattered.
\item  $L(X)'_{\beta}$ is a quasi-Baire space not carrying the finest locally convex topology.
\item  In particular, if $X$ is a metrizable space which is not locally compact, the space $L(X)'_{\beta}$ is quasi-Baire but is not Baire-like.
\item  If additionally $X$ is realcompact, $L(X)'_{\beta}$ contains an infinite-dimensional bounded set.
\end{enumerate}
\end{corollary}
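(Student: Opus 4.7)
The unifying thread is the identification $L(X)'_\beta \cong C_k(X)$, valid for every $\mu$-space $X$. I would establish it by describing the bounded subsets of $L(X)$ explicitly: a set $B\subset L(X)$ is bounded iff its coefficient vectors are uniformly bounded and its supports lie in a functionally bounded subset of $X$; since the $\mu$-space hypothesis makes such subsets relatively compact, the strong topology on $L(X)'=C(X)$ reduces to the compact-open topology. This agrees with the formula $L(X)'_\beta = C_k(\mu X)$ of \cite[Proposition 3.1, Theorem 3.3]{BG} specialized to $\mu X = X$, and the barrelledness required there is automatic from \cite[Theorem 10.1.20]{PB}. Once this identification is in place, parts (1)--(3) follow quickly.

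Concretely: part (1) is Theorem \ref{FFF} applied to $C_k(X)$. For part (2), barrelledness of $C_k(X)$ combined with Corollary \ref{quasi1} yields the quasi-Baire property; to exclude the finest locally convex topology I would note that the continuous dual of $C_k(X)$ consists of compactly supported regular Borel measures on $X$ and is therefore a proper subspace of the algebraic dual of $C(X)$ for infinite $X$, whereas in the finest locally convex topology the continuous dual coincides with the algebraic dual. For part (3), metrizability gives the $\mu$-space property, so (2) supplies quasi-Baire, and \cite[Proposition 2.19]{KKP} --- used exactly as in the proof of Corollary \ref{P2} --- yields the failure of Baire-likeness when $X$ is metrizable and not locally compact.

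The genuinely new work is in part (4). Realcompactness implies the $\mu$-space property, so $L(X)'_\beta = C_k(X)$, and I would construct the infinite-dimensional bounded set by splitting into cases. If $X$ contains an infinite compact subset $K$, then $C(K)$ is infinite-dimensional, so the unit ball $\{f \in C(X) : \sup_K|f| \leq 1\}$ of the seminorm $p_K$ is a bounded subset of $C_k(X)$ with infinite-dimensional linear span. If every compact subset of $X$ is finite, then $C_k(X)=C_p(X)$; here I would exploit the classical fact that every realcompact pseudocompact space is compact. Under our hypothesis $X$ is infinite with no infinite compact subset, hence not compact, hence not pseudocompact, and so contains an infinite closed discrete subset $\{x_n\}_{n\in\omega}$. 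The Tychonoff property then furnishes $f_n \in C(X,[0,1])$ with $f_n(x_n)=1$ and $f_n(x_k)=0$ for $k\neq n$, and the linearly independent, pointwise-bounded family $\{f_n\}$ generates the required infinite-dimensional bounded set in $C_p(X)$.

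The main obstacle is precisely this last step of part (4): producing a closed discrete infinite subset requires using realcompactness (rather than the mere $\mu$-space property used in (1)--(3)) via the realcompact-plus-pseudocompact-implies-compact dichotomy; the rest of the proof is a direct reduction to results already assembled in the paper.
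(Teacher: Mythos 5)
Your handling of (1) and (3) coincides with the paper's; the identification $L(X)'_{\beta}\cong C_k(X)$ via \cite[Proposition 3.1, Theorem 3.3]{BG} and barrelledness from \cite[Theorem 10.1.20]{PB} is exactly the paper's starting point. The genuine problem is in part (4), Case A. The set $\{f\in C(X):\sup_{K}|f|\leq 1\}$ is the unit ball of the \emph{single} seminorm $p_K$, i.e.\ a convex neighbourhood of zero in $C_k(X)$, and such a set is bounded only when the topology is (semi)normable. Unless $K=X$, it is unbounded: it contains $n\cdot g$ for every $n$ and every $g\in C(X)$ vanishing on $K$ (take $X=\R$, $K=[0,1]$ and $g$ supported in $[2,3]$), so it is not absorbed by the $p_{K'}$-ball of a compact set $K'\not\subset K$. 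The repair is to bound the functions uniformly on all of $X$: the set $M=\{f\in C(X):\ |f(x)|\leq 1 \text{ for all } x\in X\}$ \emph{is} bounded in $C_k(X)$, and it has infinite-dimensional span because for distinct points $x_1,x_2,\dots$ complete regularity gives $h_n:X\to[0,1]$ with $h_n(x_n)=1$ and $h_n(x_j)=0$ for $j<n$, a triangular and hence linearly independent system inside $M$. This one construction subsumes both of your cases and in fact makes realcompactness superfluous for producing the bounded set; the paper instead keeps that hypothesis and deduces (4) indirectly from (2), using the Nachbin--Shirota theorem to get bornologicity of $C_k(X)$ and observing that a bornological space all of whose bounded sets are finite-dimensional must carry the finest locally convex topology.

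There is also an unjustified step in your part (2). The implication you use (finest locally convex topology $\Rightarrow$ continuous dual equals algebraic dual) is correct, but the assertion that the space of compactly supported measures is a \emph{proper} subspace of $C(X)^{*}$ for every infinite $X$ is exactly what needs proof, and a cardinality or dimension count does not settle it. It does follow from the same set $M$ above: every continuous functional on $C_k(X)$ is bounded on the bounded set $M$, whereas a Hamel-basis extension of $h_n\mapsto n$ is a linear functional unbounded on $M$. By contrast, the paper's argument for the second half of (2) is self-contained given the first half: an infinite-dimensional space with the finest locally convex topology contains a (necessarily complemented) copy of $\varphi$, so by Theorem \ref{complemented} it is not quasi-Baire, contradicting the quasi-Baireness already established. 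Either route works, but yours needs the missing lemma made explicit.
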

\begin{proof}
Since  $X$ is a $\mu$-space,  $C_k(X)$ is barrelled \cite[Theorem 10.1.20]{PB}. The first part of  (1) follows from mentioned above \cite[Proposition 3.1, Theorem 3.3]{BG} and Theorem \ref{FFF}.
To check the first part of (2) we apply Corollary \ref{quasi1}  and the fact that the barrelled space $C_k(X)$ is isomorphic to $L(X)'_{\beta}$. For the proof of the second part of (2) assume that $L(X)'_{\beta}$ carries the finest locally convex topology. Then $L(X)'_{\beta}$ contains a copy $F$  of $\varphi$. But then  $F$ is complemented in $L(X)'_{\beta}$ (since any linear surjection between lcs carrying the finest locally convex topologies is continuous). This shows that $L(X)'_{\beta}$ is not quasi-Baire again by  Theorem \ref{complemented}, a contradiction with the first claim of (2).
Part  (3) follows from   (2) and \cite[Proposition 2.19]{KKP}; recall here  that $X$ is a $\mu$-space.
In order to prove (4), first recall that every  realcompact space is a $\mu$-space. Next, note that $L(X)'_{\beta}=C_k(X)$ is bornological (i.e. every locally bounded linear map from $L(X)'_{\beta}$ to any lcs is continuous). Indeed, since $X$ is realcompact, the claim follows from the Nachbin-Shirota theorem, see for example  \cite[Theorem 10.1.12]{PB}. Assume that every bounded set in $L(X)'_{\beta}$ is finite-dimensional. Then the identity map from  $L(X)'_{\beta}$ endowed with the original strong topology $\tau$ onto the $L(X)'_{\beta}$ endowed with the finest locally convex topology is continuous. Hence,
$\tau$ is the finest locally convex topology, what  contradicts  (2).
\end{proof}

Note that  according to \cite[Theorem 3.5]{BG} every bounded set in $L(X)$ is finite-dimensional if and only if each  functionally bounded set in $X$ is finite.
As a byproduct, our Theorem \ref{weak} combined with Theorem \ref{Theorem1} yields the following
\begin{corollary}\cite[Proposition 3.11]{BG}\label{consequence}
For every infinite Tychonoff space $X$ the space $L(X)$ does not carry the weak topology.
\end{corollary}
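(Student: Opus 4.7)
The plan is to argue by contradiction, combining the two named theorems in essentially one line.

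First I would suppose that $L(X)$ carries its weak topology $\sigma(L(X),L(X)')$, so that $L(X)=L(X)_w$ as locally convex spaces. By Theorem \ref{weak} applied to the lcs $E=L(X)$, the space $L(X)_w$ is Asplund. Hence under our assumption $L(X)$ itself would be Asplund.

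Next I would invoke the elementary hierarchy of differentiability classes recorded after Definitions \ref{def3} and \ref{def4}: Fr\'echet differentiability implies G\^ateaux differentiability and a dense $G_\delta$ set is dense, so every Asplund lcs is in particular a GDS. Consequently $L(X)$ would be a GDS, directly contradicting Theorem \ref{Theorem1}, which asserts that $L(X)$ is not a GDS for any infinite Tychonoff $X$. This contradiction yields the claim.

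There is really no main obstacle here; the whole content has been front-loaded into Theorems \ref{weak} and \ref{Theorem1}, and the corollary is just the observation that these two facts are incompatible once one identifies $L(X)$ with $L(X)_w$. The only thing to be careful about is to state explicitly the implication Asplund $\Rightarrow$ GDS, so that the chain Theorem \ref{weak} $\Rightarrow$ $L(X)_w$ is Asplund $\Rightarrow$ $L(X)_w$ is GDS is transparent; then the contradiction with Theorem \ref{Theorem1} is immediate.
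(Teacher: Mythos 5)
Your argument is correct and is precisely the paper's own: the corollary is stated as a byproduct of Theorem \ref{weak} ($L(X)_w$ is Asplund) combined with Theorem \ref{Theorem1} ($L(X)$ is not GDS), using the evident implication Asplund $\Rightarrow$ GDS. Nothing further is needed.
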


\section{Baire subspaces of $L(X)$} \label{Baire}
\bigskip
It is known that the free locally convex space $L(X)$ is a Baire space if and only if $X$ is finite.
The same is true for metrizability: $L(X)$ is metrizable if and only if $X$ is finite.
Using  \cite[Theorem 5]{FKS} we know that  the space $L(X)$ is quasibarrelled if and only if $L(X)$ is barrelled if and only if $X$ is discrete. On the other hand, the space  $L(X)$
 is complete if and only if $X$ is Dieudonn\'e complete and does not have infinite compact subsets, see \cite{Uspenskii}.

 In order to prove Theorem \ref{Theorem2}  below we will use the following  elementary combinatorial statement. For the reader's convenience we provide a complete argument.
\begin{lemma}\label{lemma1} Let $B=\{\xi_i: i=1, 2,\dots, k\}$ be a subset of an Euclidean space $\R^K$ consisting of linearly independent vectors. Then some convex linear combination
of vectors of $B$ has at least $k$ non-zero coordinates in $\R^K$.
\end{lemma}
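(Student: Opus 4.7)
The plan is to translate the claim into a finite-dimensional transversality problem on the standard simplex. I would arrange $\xi_{1},\dots,\xi_{k}$ as the rows of a $k\times K$ matrix $M$; the linear independence hypothesis gives $\mathrm{rank}(M)=k$, so the column rank is also $k$ and one may select indices $j_{1}<\cdots<j_{k}$ such that the columns $c_{1},\dots,c_{k}\in\R^{k}$ in those positions form a basis of $\R^{k}$. For any candidate convex combination $v=\sum_{i}\lambda_{i}\xi_{i}$, the $j_{\ell}$-th coordinate is then simply $\langle\lambda,c_{\ell}\rangle$, so the assertion reduces to exhibiting a point $\lambda$ in the simplex $\Delta^{k-1}=\{\lambda\in\R^{k}:\lambda_{i}\ge 0,\ \sum_{i}\lambda_{i}=1\}$ that avoids each of the $k$ linear hyperplanes $H_{\ell}=\{\lambda:\langle\lambda,c_{\ell}\rangle=0\}$; such a $\lambda$ will automatically yield a combination $v$ whose coordinates at the positions $j_{1},\dots,j_{k}$ are all non-zero, i.e.\ at least $k$ non-zero coordinates.

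Next I would verify that each intersection $H_{\ell}\cap\Delta^{k-1}$ is a proper subset of $\Delta^{k-1}$. Since $c_{\ell}\ne 0$, evaluating the defining functional at the vertices of the simplex gives $\langle e_{i},c_{\ell}\rangle=(c_{\ell})_{i}$, so if $H_{\ell}$ contained the whole of $\Delta^{k-1}$ it would force $(c_{\ell})_{i}=0$ for every $i$, contradicting $c_{\ell}\ne 0$. Consequently $H_{\ell}\cap\Delta^{k-1}$ is contained in a codimension-one affine subspace of the affine hull of $\Delta^{k-1}$ and is therefore nowhere dense (equivalently, has $(k-1)$-dimensional Lebesgue measure zero) inside the $(k-1)$-dimensional simplex.

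The last step is the standard genericity argument: the full-dimensional relatively open simplex $\Delta^{k-1}$ cannot be covered by the finite union $\bigcup_{\ell=1}^{k}H_{\ell}$ of nowhere dense sets, so any $\lambda^{*}$ picked in the non-empty complement supplies the required convex combination $v=\sum_{i}\lambda_{i}^{*}\xi_{i}$ with $v_{j_{\ell}}\ne 0$ for every $\ell=1,\dots,k$. The only place where a genuine use of the hypothesis enters is the appeal to the equality of row rank and column rank in order to locate $k$ linearly independent columns; once this preliminary index selection is made the rest is routine, and I do not foresee a substantive obstacle in writing out the details.
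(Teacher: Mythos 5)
Your proof is correct, but it proceeds by a genuinely different route from the paper's. The paper works coordinate-wise on the target space $\R^K$: it observes that the set $C$ of positions at which at least one $\xi_i$ is non-zero must have $|C|\ge k$ (else all the $\xi_i$ would lie in a coordinate subspace of dimension $<k$), and then \emph{constructs} positive weights $a_1,\dots,a_k$ recursively, growing fast enough that at each position of $C$ the dominant term cannot be cancelled; normalizing gives the convex combination. You instead work in the coefficient space $\R^k$: you use row rank $=$ column rank to select $k$ independent columns, rephrase the problem as avoiding $k$ hyperplanes $H_\ell=\{\lambda:\langle\lambda,c_\ell\rangle=0\}$ on the simplex, and conclude by a genericity (category/measure) argument. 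Your approach is cleaner to justify rigorously and shows that \emph{almost every} point of the simplex works, whereas the paper's recursive choice of coefficients is constructive and yields the formally stronger conclusion that the combination is non-zero at \emph{every} coordinate of $C$ (a set of size $\ge k$, possibly larger). Note that your argument would deliver that stronger conclusion too if you ran it over all columns indexed by $C$ (each is non-zero, so each hyperplane section is still proper) rather than only over the $k$ selected independent columns; as written, both arguments fully establish the lemma as stated.
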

\begin{proof}
For every $\xi \in R^K$  we denote by $N(\xi)$ the set of non-zero coordinates.
 The union of all sets $C=\bigcup_{i=1}^k N(\xi_i)$ has a size at least $k$, otherwise we get a contradiction as $B$ would be linearly dependent.
Now, by choosing positive coefficients $a_1, a_2, \dots, a_k$ recursively, which grow sufficiently fast, we get a linear combination $\sum_{i=1}^k a_i \xi_i$ with all the coordinates in
 this set $C$ being non-zero. Normalizing it, i.e. dividing by the value $\sum_{i=1}^k a_i$, we obtain an element of the convex hull of $B$ having the same property.
\end{proof}
We know that a separable Baire lcs is weak Asplund. This  combined with  Theorem \ref{Theorem1} and Problem \ref{L} may suggest  Theorem \ref{Theorem2} below. First recall that
a lcs $E$  is {\it quasi-complete} if every closed bounded subset of $E$ is complete, and $L(X)$ is quasi-complete if and only if every functionally bounded set in $X$ is finite, see \cite[Theorem 3.8]{Gabriyelyan2}. 
\begin{theorem} \label{Theorem2}
Let $X$ be a Tychonoff space. Assume that $V$ is a vector subspace of $L(X)$ with an infinite set $D$ of linearly independent vectors.  Then the following holds:
\begin{enumerate}
\item The space $V$ is not a Baire space.
\item  If $V$ is dominated by a metrizable locally convex  topology then $V$ cannot be  quasi-complete.
\end{enumerate}
\end{theorem}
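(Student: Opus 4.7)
The plan is to prove (1) by realizing $V$ as a countable union of closed nowhere-dense sets via the support-size stratification of $L(X)$, and then to derive (2) by combining (1) with the fact that a metrizable quasi-complete lcs is Fr\'echet, hence Baire.

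For each $n\in\omega$ put $L_n(X):=\{\xi\in L(X):|\supp(\xi)|\leq n\}$, so that $L(X)=\bigcup_{n}L_n(X)$. It is well known in the theory of free lcs that each $L_n(X)$ is closed in $L(X)$ for Tychonoff $X$ (the argument uses the Tychonoff property to separate support points via the duality $L(X)'=C(X)$). Setting $V_n:=V\cap L_n(X)$, each $V_n$ is closed in $V$ and $V=\bigcup_{n}V_n$; it therefore suffices to show that every $V_n$ has empty interior in $V$.

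Fix $\xi_0\in V_n$ and a neighborhood $\mathcal{U}$ of $\xi_0$ in $V$. Pick $2n+1$ linearly independent vectors $d_1,\ldots,d_{2n+1}\in D$ and set $S:=\bigcup_{i}\supp(d_i)\subset X$; since $S$ is finite, the $d_i$ lie in the Euclidean space $L(S,X)\cong\R^{|S|}$, and Lemma \ref{lemma1} supplies a convex combination $v=\sum_{i=1}^{2n+1}a_i d_i\in V$ with $|\supp(v)|\geq 2n+1$. For all sufficiently small $t>0$ (avoiding the finitely many values at which coefficient cancellation occurs) we have $\xi_0+tv\in\mathcal{U}$ and
$$|\supp(\xi_0+tv)|\geq |\supp(v)\setminus\supp(\xi_0)|\geq (2n+1)-n=n+1,$$
so $\xi_0+tv\in\mathcal{U}\setminus L_n(X)$. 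Hence $V_n$ has empty interior in $V$, and $V$ is not Baire.

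For (2), I read ``$V$ is dominated by a metrizable locally convex topology'' as asserting that the topology of $V$ is itself metrizable. If such a $V$ were quasi-complete, then every Cauchy sequence would be bounded, its closure would be a closed bounded hence complete subset of $V$, and the sequence would converge; thus $(V,\tau_V)$ would be a Fr\'echet space, which is Baire by the classical Baire category theorem. This contradicts part (1), so $V$ cannot be quasi-complete.

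The main obstacle is the closedness of $L_n(X)$ in $L(X)$, which I invoke as a known fact from free lcs theory rather than deriving from scratch. Should ``dominated by a metrizable lc topology'' instead mean that $V$ admits a strictly finer metrizable lc topology $\tau_m$, then (2) requires extra work: inductively produce $d_i\in V$, each introducing a fresh support point $y_i\notin\bigcup_{j<i}\supp(d_j)$ (possible since $V$ is infinite-dimensional), scale them so that $\sum\mu_i d_i$ is simultaneously $\tau_m$- and $\tau_V$-Cauchy, apply quasi-completeness to obtain a $\tau_V$-limit $s\in V\subset L(X)$, and contradict $|\supp(s)|<\infty$ by showing the coefficient of each $y_i$ in $s$ remains nonzero after the tail sums.
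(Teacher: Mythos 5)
Your proof of part (1) is correct and follows essentially the paper's own argument: the same stratification $V_n=V\cap L_n(X)$ by length, the closedness of $L_n(X)$ invoked from the literature, and the same Lemma \ref{lemma1}. The only cosmetic difference is that the paper extracts $n+1$ linearly independent vectors from inside a convex open subset of some $V_n$ and lands their convex combination directly in that set, while you take $2n+1$ vectors from $D$ and perturb an arbitrary point of $V_n$ by a small multiple of their convex combination; both work.

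Part (2) is where there is a genuine gap. The paper reads ``$V$ is dominated by a metrizable locally convex topology'' as: there exists a metrizable locally convex topology $\gamma$ on $V$ that is \emph{finer} than the topology induced from $L(X)$ (its proof begins ``let $\gamma$ be a stronger metrizable locally convex topology on $V$''). This does not make the induced topology of $V$ metrizable --- a coarser topology under a metrizable one need not be metrizable (think of the weak topology of a Banach space under the norm topology) --- so your main argument (metrizable $+$ quasi-complete $\Rightarrow$ Fr\'echet $\Rightarrow$ Baire, contradicting (1)) only settles a special case of the statement. Your fallback sketch addresses the right generality but breaks down at the last step: to contradict $|\supp(s)|<\infty$ you need to extract the coefficient of each $y_i$ from the limit $s$ of the partial sums, and the coefficient functional at a point $x\in X$ is continuous on $L(X)$ only when it is realized by some $f\in C(X)$ separating $x$ from all the other support points involved, which fails when the $y_i$ accumulate; moreover the later summands $d_j$ with $j>i$ may themselves carry $y_i$ in their supports, so the coefficient at $y_i$ of the partial sums need not stabilize at a nonzero value. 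The paper avoids all of this by a different mechanism: it picks scalars $t_n$ with $x_n=t_nd_n\to 0$ in the finer metrizable topology $\gamma$, forms the closed convex hull $K_c$ of $\{x_n\}\cup\{0\}$ in the induced topology (precompact by \cite[20.6.2]{Kothe}), uses quasi-completeness to conclude that $K_c$ is compact with infinite-dimensional span, notes that every barrel of $L(X)$ absorbs $K_c$, and then contradicts \cite[Theorem 2.2]{BG}, which says every barrel-bounded subset of $L(X)$ is finite-dimensional. Some such structural input about $L(X)$ seems unavoidable here, and your sketch does not supply a substitute for it.
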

\begin{proof} Let $V \subseteq L(X)$ be a linear subspace which is Baire.
We will use the following notation. Let $\xi\in L(X)$. The length $l(\xi)$ is the natural number $n$ such that
there are distinct points $x_1, x_2, \dots , x_n \in X$ and reals $\lambda_1, \lambda_2, \dots , \lambda_n \in \R\setminus \{0\}$ for which
$$\xi = \lambda_1 x_1 + \lambda_2 x_2 + \dots \lambda_n x_n.$$
If $\xi$ is the zero element of $L(X)$, then $l(\xi)=0$. Further, $L_n(X) = \{\xi \in L(X): l(\xi)\leq n \}$ for every natural $n$.
One can easily see that all $L_n(X)$ are closed subsets of $L(X)$ \cite[Proposition 0.5.16]{Arch_book}.

Denote by $V_n = V \cap L_n(X)$. Since $V$ is Baire, at least one $V_n$ contains an open subset $A = U \cap V$,
where $U$ is open in $L(X)$. But $L(X)$ is a lcs, therefore we can assume that $A$ is convex and open in $V$.

On the contrary, assume that $V$ contains infinitely many linearly independent vectors. Then $A$ also contains some infinite
set $B=\{\xi_i: i\in \omega\}$ consisting of linearly independent vectors. Remind that every vector in $B$ has the length at most $n$.
We claim that by Lemma \ref{lemma1}  some convex linear combination $\alpha= \sum_{i=1}^{n+1} t_i \xi_i$ has the length $l(\alpha) \geq n+1$ in $L(X)$.
Since $\alpha$ is in $A$ but does not belong to $L_n(X)$ we obtain a contradiction which finishes the proof.

Now we prove item (2). Assume that $V$ is a quasi-complete vector subspace of $L(X)$ and let $\gamma$ be a stronger metrizable locally convex topology on $V$.  Take any infinite sequence $(d_{n})_{n}$  in $D$ and a sequence $(t_{n})_n$ of positive elements  in reals such that $x_{n}=t_{n}d_{n}\rightarrow 0$ in the topology $\gamma$ on $V$. Set $K=\{x_{n}:n\in\omega\}\cup \{0\}$. The closed convex hull $K_{c}$ of $K$ in the original topology of $V$ is a precompact space, see \cite[20.6.2]{Kothe}.  Since $V$ is assumed to be quasi-complete, $K_{c}$ must be a complete space.  By \cite[Corollary 3.5.4]{Ja} the space $K_{c}$ is a  compact space whose linear span is an infinite-dimensional space. On the other hand, by \cite[Corollary 3.2.10]{PB} every barrel in $L(X)$ absorbs the set $K_{c}$, i.e. for every closed (in $L(X)$) absolutely convex absorbing subset $Z$ in $L(X)$ there exists a scalar $t> 0$ such that $K_{c}\subset tZ$. This means that $K_{c}$ is an infinite-dimensional {\it barrel-bounded} subset of $L(X)$ in sense of the definition of \cite[p. 6396]{BG}. This contradicts however \cite[Theorem 2.2]{BG} stating that  for every Tychonoff space $X$ in the space $L(X)$ every barrel-bounded set must be finite-dimensional.
\end{proof}
We do not know if the assumption  in (2) of quasi-completeness can  be removed.
\begin{corollary} \label{corollary1}
For any Tychonoff space $X$, the lcs $L(X)$ contains no infinite-dimensional Fr\'echet locally convex subspace. Consequently, every Banach subspace of $L(X)$ has separable dual but $L(X)$ is not a GDS space provided $X$ is infinite.
\end{corollary}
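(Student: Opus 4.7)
My plan is to derive both clauses directly from results already established in the excerpt. For the first assertion I argue by contradiction: suppose $V \subseteq L(X)$ is an infinite-dimensional locally convex subspace that, in its induced topology, is a Fr\'echet lcs. Since $V$ is then complete and metrizable, the classical Baire category theorem yields that $V$ is a Baire space. On the other hand, being infinite-dimensional, $V$ contains an infinite linearly independent set, so Theorem \ref{Theorem2}(1) applies to $V$ and forces $V$ \emph{not} to be Baire. This contradiction proves that no such $V$ can exist.

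The consequences then fall out in one line each. Every Banach space is Fr\'echet, so by the first part every Banach subspace $W \subseteq L(X)$ must be finite-dimensional; finite-dimensional Banach spaces trivially have separable dual, so the ``separable dual'' statement holds vacuously for the infinite-dimensional situation. The failure of $L(X)$ to be a GDS when $X$ is infinite is precisely the content of Theorem \ref{Theorem1}, applied with $Y = X$, so nothing further is needed.

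The proof is short because the genuine work is absorbed into Theorem \ref{Theorem2}(1) and Theorem \ref{Theorem1}; the only routine check is that the ``Fr\'echet'' hypothesis on $V$ refers to the induced topology from $L(X)$, which is automatic from the statement. I anticipate no real obstacle. The interest of the corollary is conceptual rather than technical: it exhibits the striking juxtaposition that in $L(X)$ every Banach subspace has separable dual while the ambient space fails even the G\^ateaux differentiability property, which emphasizes that the Banach-space characterization of Asplund spaces recorded in Theorem \ref{separable-dual} does not extend naively to arbitrary lcs, and in particular to free locally convex spaces.
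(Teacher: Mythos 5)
Your proposal is correct and is exactly the argument the paper intends: the first clause follows by contradiction from Theorem \ref{Theorem2}(1) together with the Baire category theorem for completely metrizable spaces, and the remaining clauses follow since every Banach subspace is then finite-dimensional (hence has separable dual) while Theorem \ref{Theorem1} gives the failure of GDS. Nothing further is needed.
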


With the help of results from \cite{LU} and \cite{Usp08} one can show that for every compact $X$ the space 
$L(X)$ does not contain infinite-dimensional linear normed subspaces.

\begin{problem} Is it true that for any Tychonoff space $X$ the free lcs $L(X)$ does not have infinite-dimensional linear normed subspaces?
\end{problem}

\section{Projective limits, quojections and the Asplund property}\label{s:2}
\bigskip
Let us recall the fundamental concepts related with projective limits and quojections.

Let  $(E_n)_n$  be a sequence of Banach spaces. For all $m,n$, $m\geq  n$ let
$P_{nm}:E_m \rightarrow E_n$ be  a continuous linear mapping such that  $P_{nn}$ is the identity
and $$P_{nm}\circ P_{ms} = P_{ns} (s\geq m\geq n).$$
The pair $((E_n),(P_{nm})_{m\geq n})$ is called a projective sequence and
$$E:=( (x(n))\in \prod_{n}E_n : P_{nm}(x(m))=x(n),  m\geq n)$$
with the product topology is called its projective limit.
Canonical projections $P_n: E \rightarrow E_n$ are defined as follows:
$(x(m))_{m}\mapsto  x(n)$.

For two Banach spaces $E$ and $F$ the symbol $E\approx F$ means that $E$ and $F$  are isomorphic.
The following result has been obtained recently in \cite[Theorem 2.3]{Kakol-Leiderman-2}.

\begin{theorem}[K\c akol-Leiderman]\label{product}
The product of any family of Banach spaces $(E_{\alpha})$ is an Asplund lcs if and only if each $E_{\alpha}$ is Asplund.
\end{theorem}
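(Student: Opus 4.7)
The plan is to treat the two implications separately. For the forward direction ``each $E_\alpha$ Asplund $\Rightarrow$ $\prod_\alpha E_\alpha$ Asplund'' I would invoke the Eyland--Sharp factor theorem (Theorem \ref{factor}(i)) with the natural defining family of seminorms on the product. For the converse I would pull a convex function back along a coordinate projection, transfer Fr\'echet differentiability along the way, and use that Banach spaces are trivially bound covering so that Theorem \ref{factor}(iii) supplies the $G_\delta$ conclusion automatically.

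For the ($\Leftarrow$) direction, on $E=\prod_{\alpha\in A}E_\alpha$ take the defining seminorms $p_F(x)=\max_{\alpha\in F}\|x_\alpha\|_{E_\alpha}$ indexed by finite $F\subset A$; these generate the product topology. The kernel $\ker(p_F)=\{x\in E:x_\alpha=0\ \forall\alpha\in F\}$ makes the quotient $E_{p_F}=E/\ker(p_F)$ endowed with $\overline{p_F}$ isometrically isomorphic to the finite Banach product $\prod_{\alpha\in F}E_\alpha$ with the max norm, which is Asplund by the classical Banach stability under finite products recalled in the introduction. The quotient map $\pi_F\colon E\to E_{p_F}$ is open because the product topology has basic open sets of the form $\prod_\alpha U_\alpha$ with $U_\alpha=E_\alpha$ off a finite set. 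Bound covering is immediate: a bounded $B\subset E_{p_F}$ lifts to $A=\{x\in E:(x_\alpha)_{\alpha\in F}\in B,\ x_\alpha=0\ \forall\alpha\notin F\}$, bounded in $E$, with $\pi_F(A)=B$. Theorem \ref{factor}(i) then yields that $E$ is Asplund.

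For the ($\Rightarrow$) direction, fix $\beta\in A$ and let $f\colon D\to\R$ be continuous convex on a nonempty open convex $D\subset E_\beta$. I would pass to $\widetilde D=\pi_\beta^{-1}(D)=D\times\prod_{\alpha\ne\beta}E_\alpha$, which is open and convex, and to $\widetilde f=f\circ\pi_\beta$, which is continuous convex on $\widetilde D$. By hypothesis there is a dense $G_\delta$ set $S\subset\widetilde D$ on which $\widetilde f$ is Fr\'echet differentiable. The key observation is that if $\widetilde f$ is Fr\'echet differentiable at $x\in S$ with derivative $L\in E'$, then testing the bounded-set formulation of the lcs derivative against the bounded set $\iota_\beta(B_{E_\beta})\subset E$ (where $\iota_\beta\colon E_\beta\hookrightarrow E$ is the canonical embedding and $B_{E_\beta}$ is the unit ball) forces $f$ to be Fr\'echet differentiable at $\pi_\beta(x)$ with derivative $L\circ\iota_\beta\in E_\beta'$. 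Hence the Fr\'echet differentiability set $S'\subset D$ of $f$ contains $\pi_\beta(S)$, which is dense in $D$ because $\pi_\beta$ is surjective and continuous with $\pi_\beta^{-1}(V)$ meeting $S$ for every nonempty open $V\subset D$. Finally, since $E_\beta$ is Banach, it is trivially bound covering (its norm is a single defining seminorm), so Theorem \ref{factor}(iii) guarantees that $S'$ is automatically a $G_\delta$ in $D$. Thus $S'$ is dense $G_\delta$, witnessing that $E_\beta$ is Asplund.

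The only mildly delicate point is the derivative transfer in the ($\Rightarrow$) direction, because in general the $G_\delta$ property is not preserved by an open continuous surjection such as $\pi_\beta$; but Theorem \ref{factor}(iii), applied to the Banach space $E_\beta$ itself, supplies precisely the $G_\delta$-ness needed ``downstairs,'' so this turns out to be no obstacle. The rest reduces to routine bookkeeping (verifying openness of $\pi_F$, lifting bounded sets by zero-extension, and appealing to finite-product stability of Asplund in the Banach class).
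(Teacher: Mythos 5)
Your argument is correct, but note that this paper does not actually prove Theorem \ref{product}: it is quoted from the companion paper \cite[Theorem 2.3]{Kakol-Leiderman-2}, so there is no in-text proof to compare against. Judged on its own, your proof is sound and is exactly in the spirit of how the present paper uses the Eyland--Sharp machinery elsewhere (e.g.\ in Theorems \ref{weak} and \ref{fifth}). The forward direction via the seminorms $p_F(x)=\max_{\alpha\in F}\|x_\alpha\|$ is fine: the quotient $E/\ker(p_F)$ with $\overline{p_F}$ is isometric to the finite product $\prod_{\alpha\in F}E_\alpha$ with the max norm, which is Asplund by the finite-product stability for Banach spaces recalled in the introduction, and your zero-extension lift shows the bound covering property, so Theorem \ref{factor}(i) applies. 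For the converse, your direct transfer of the Fr\'echet derivative along $\pi_\beta$ is the right move, and it is genuinely needed: Theorem \ref{factor}(ii) only gives the converse for Fr\'echet lcs, and an uncountable product of Banach spaces is not metrizable, so you cannot just quote (ii). Testing the bounded-set formulation against $\iota_\beta(B_{E_\beta})$, which is bounded in the product, does yield Fr\'echet differentiability of $f$ at $\pi_\beta(x)$ with derivative $L\circ\iota_\beta\in E_\beta'$, the image $\pi_\beta(S)$ is dense in $D$ since $\pi_\beta$ is an open surjection, and the $G_\delta$ conclusion downstairs is automatic for Banach spaces (classical, or via Theorem \ref{factor}(iii) as you say). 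One could alternatively get the converse from the fact that each $E_\beta$ is complemented in the product, but your coordinatewise argument is self-contained and avoids appealing to statements formulated only for Fr\'echet lcs.
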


Note  also that every Fr\'echet lcs $E$ is the projective limit of a sequence of its local Banach spaces $E_{n}$, see \cite[Definition, p. 279, Remark 24.5]{Meise-Vogt}.
\begin{problem} Is a Fr\'echet lcs $E$ an Asplund space provided every its local Banach space $E_n$ is Asplund?
\end{problem}
Let $((E_n)_n, (P_{nm})_{m\geq n})$ be a  projective sequence of Banach spaces for a Fr\'echet lcs $E$ and let  $E$ be its projective limit. We call  $E$  a {\it strict projective limit} of the sequence $(E_{n})_{n}$ if each $P_n:E\rightarrow E_n$
is surjective. Then, by the open mapping theorem, see \cite[Theorem 8.4.11]{PB},  the quotient space $E/ker(P_n)$ is isomorphic to the Banach space $E_n$, $n\in\omega$.
 The strict projective limits  of Banach spaces are called {\it quojections} \cite{Bellenot}.

It is known that a Fr\'echet lcs  $E$ is a quojection if and only if  for every continuous seminorm $p$ on $E$, the quotient space
$E/ker p$ endowed with the quotient topology is a Banach space, if and only if every quotient of $E$  which admits a continuous norm is a Banach space,  see \cite[Proposition 8.4.33]{PB}.
 The latter fact implies that
 a quotient of a quojection is again a quojection, see   \cite{Bellenot}, although the quojection property is not inherited by closed subspaces \cite[p. 238]{Metafune}. Trivially, every countable product of Banach spaces is a quojection, see \cite[Observation 8.4.28]{PB}.
 On the other hand, by \cite[Theorem]{Ostrovskii} we derive that there exist quojections not isomorphic to countable products of Banach spaces and  without infinite-dimensional Banach subspaces.
 Note however that there exist Fr\'echet lcs which are Asplund not being quojections, see Example \ref{fast}.
\begin{example}\label{fast}
The space of rapidly decreasing sequences $\mathfrak{s}=\{x=(x_n)\in \mathbb{R}^\omega: |x|_{q}=(\sum_{j=1}^{\infty}|x_j|^{2}j^{2q})^{1/2} < {\infty}, q\in N_0 \}$ with the family  of  norms $|x|_{q}$ is a nuclear (hence Montel separable) Fr\'echet space, see for example \cite{Cias} for details and several references concerning this space. Since $\mathfrak{s}$ is a Montel Fr\'echet lcs, it must be  Asplund by \cite[Therorem 3.4]{Sharp}.  Nevertheless $\mathfrak{s}$ is not  a quojection since the linking maps $P_{nm}$ being compact cannot be surjective, where all spaces  $E_{n}$ defining the  projective limit of $\mathfrak{s}$ are  Hilbert spaces with the norm $|x|_{n}$.
\end{example}

Note that if $E$ is a Fr\'echet  lcs, then $E$   is a (closed) subspace of the product $\prod_n E_{n}$ of the corresponding local Banach spaces $E_n$. By Theorem \ref{product} the space $\prod_n E_{n}$ is Asplund provided each $E_n$ is  Asplund. It is not clear if then  $E$ is Asplund.
 Nevertheless, we prove the following assertion which is one of the main results of our paper.

\begin{theorem}\label{fifth}
Let $E$ be a quojection, so $E=s-proj((E_n)_n, (P_{n})_n)$ is the strict projective limit of Banach spaces  $E_{n}$. Then $E$ is an Asplund (weak Asplund) space if and only if each space $E_n$ is an Asplund (weak Asplund) space.
\end{theorem}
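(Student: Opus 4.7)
The plan is to apply Eyland--Sharp's Factor Theorem (Theorem \ref{factor}) to the natural defining family of seminorms $\{p_n : n \in \omega\}$ on $E$ given by $p_n(x) := \|P_n(x)\|_{E_n}$. Since $E$ is the strict projective limit of $(E_n)$ with the projective-limit (product) topology, this family generates the Fr\'echet topology on $E$; moreover $\ker p_n = \ker P_n$, and the induced norm $\overline{p_n}$ on $E/\ker p_n$ is exactly the norm of $E_n$, so $E_{p_n} \cong E_n$ as Banach spaces with canonical quotient map $\pi_{p_n} = P_n$. Granted the hypotheses of Theorem \ref{factor} for this family, both directions fall out immediately: part (i) yields that $E$ is Asplund (weak Asplund) provided each $E_n$ is, while part (ii), which applies because $E$ is Fr\'echet, yields the converse.

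Thus the whole content of the proof reduces to verifying that $E$ is bound covering with respect to $\{p_n\}$. Openness of each $\pi_{p_n} = P_n$ is immediate from the open mapping theorem, since $P_n$ is a continuous linear surjection from the Fr\'echet space $E$ onto the Banach space $E_n$. It remains to check the lifting of bounded sets: given a bounded $B \subset E_n$, I must produce a bounded $A \subset E$ with $B \subset P_n(A)$. The key observation is that in a quojection every linking map $P_{nm} : E_m \to E_n$, $m \geq n$, is a continuous linear surjection between Banach spaces, since the factorization $P_n = P_{nm} \circ P_m$ with $P_n, P_m$ both surjective forces $P_{nm}$ to be onto. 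By the Banach open mapping theorem each $P_{nm}$ admits norm-controlled lifts: there is $C_{nm} > 0$ such that every $y \in E_n$ has a preimage $z \in E_m$ with $\|z\| \leq C_{nm}\,\|y\|$. Starting from $B$ and iterating, for each $b \in B$ I inductively pick a coherent sequence $(b_m)_{m \geq n}$ with $b_n = b$ and $P_{m,m+1}(b_{m+1}) = b_m$, where at each level the norms $\|b_m\|$ remain uniformly bounded over $b \in B$. Setting $b_k := P_{kn}(b)$ for $k < n$ (bounded by continuity of $P_{kn}$) yields $\xi_b := (b_m)_m \in E$ with $P_n(\xi_b) = b$. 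The collection $A := \{\xi_b : b \in B\}$ then satisfies $P_m(A) \subset B_m$ with $B_m$ bounded in $E_m$ for every $m$; since the topology of $E$ is the product topology, this is exactly the statement that $A$ is bounded in $E$, and by construction $P_n(A) \supset B$.

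The main obstacle is precisely this coherent lifting step. What makes it work for quojections, as opposed to arbitrary projective sequences of Banach spaces, is the surjectivity of every linking map $P_{nm}$, which is the structural feature built into the quojection hypothesis; without it one could not propagate the lift up the projective tower in a norm-controlled way. Once bound covering is established, Theorem \ref{factor}(i) delivers the ``if'' direction and Theorem \ref{factor}(ii) delivers the ``only if'' direction, completing the proof.
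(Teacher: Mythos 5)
Your proposal is correct, and its skeleton coincides with the paper's: take the natural defining seminorms $p_n=\|P_n(\cdot)\|_{E_n}$, observe $\ker p_n=\ker P_n$ so that $E_{p_n}\cong E_n$ with quotient map $P_n$, verify that $E$ is bound covering for this family, and then get sufficiency from Theorem \ref{factor}(i) and necessity from Theorem \ref{factor}(ii). Where you genuinely diverge is in the one nontrivial step, the verification that $E$ is bound covering. The paper obtains this abstractly: every quojection is quasinormable (Doma\'nski--Orty\'nski), and by a theorem of Mi\~narro every quotient map from a quasinormable space onto a Banach space is bound covering. You instead prove it by hand: surjectivity of the canonical projections forces every linking map $P_{nm}$ to be a continuous linear surjection of Banach spaces, the open mapping theorem then gives norm-controlled lifts, and iterating these lifts produces, for each bounded $B\subset E_n$, a coherent family $\{\xi_b\}_{b\in B}$ in the projective limit whose coordinate projections are uniformly bounded at every level --- hence a bounded $A\subset E$ with $P_n(A)\supset B$. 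Your route is self-contained and elementary, and it makes completely explicit where the quojection hypothesis (surjectivity of the $P_n$, hence of the $P_{nm}$) is used; the paper's route is shorter but leans on two external results. Two small points to record in a final write-up: the coherence condition $P_{km}(b_m)=b_k$ must hold for all $m\ge k$, not only consecutive indices, but this follows by induction from $P_{km}=P_{k,m-1}\circ P_{m-1,m}$; and the identification of $(E/\ker p_n,\overline{p_n})$ with $(E_n,\|\cdot\|_n)$ is an isometry, so the openness of $\pi_{p_n}$ is indeed just the openness of $P_n$ from the open mapping theorem, as you say.
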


\begin{proof}
Assume that each Banach space $E_n$ is Asplund (weak Asplund). Let $z_{n}(x)=\|x\|_{n}$ be the original norm of the space $E_n$. Let $P_n: E\rightarrow E_n$ be a continuous linear (open) surjection which exists by the definition of the strict projective limit. Then the countable family $\{z_n\circ P_n:n\in\omega\}$ composes a defining family of continuous seminorms on the Fr\'echet lcs $E$, see
\cite[Definition, p. 278--279]{Meise-Vogt}. Note that $ker(z_n\circ P_n)=ker(P_n)$ for each $n\in\omega$. Hence $E/ker(z_n\circ P_n)=E/ker(P_n)$  is isomorphic to the Asplund (weak Asplund) Banach space $E_{n}$.
Set $h_n=z_n\circ P_n$ for each $n\in\omega$.
Now define the norm $$\overline{h_n}(x+ker(h_n))=h_n(x)$$ on $E/ker(h_n)$ for each $x\in E$.
Since $E$ is a quojection  Fr\'echet lcs we apply \cite[Theorem 8.4.33]{PB} to derive that each  $(E/ker(h_n), \overline{h_n})$ is a Banach space (isomorphic to the Banach space $E/ker(h_{n})$ since the (continuous) identity map from  $E/ker(h_{n})$ onto $(E/ker(h_n), \overline{h_n})$ is open by the Open Mapping Theorem \cite[Theorem 1.2.36]{PB}). Set $F_n=(E/ker(h_n), \overline{h_n})$, $n\in\omega$. Let $Q_n: E \rightarrow F_n$ be the quotient map. Since  every quojection is a quasinormable space \cite[Section 1, p. 217]{Domanski}, we apply \cite[Theorem 1]{Minarro} and deduce that each $Q_n$ is a bound covering map from the quasinormable space $E$ onto the Banach space $F_n$. Therefore,  we proved that $E$ is a bound covering  Fr\'echet lcs  with the corresponding sequence  of Banach spaces  $F_n=(E/ker(h_n), \overline{h_n})$ and the  bound covering  quotient maps $Q_n: E\rightarrow F_n$. Finally by Theorem \ref{factor} (i) we conclude that $E$ is an Asplund (weak Asplund) space.

Conversely, assume that the Fr\'echet lcs $E=s-proj((E_n)_n, (P_{n})_n)$ is an Asplund (weak Asplund) space. We show that each defining Banach space $E_n$ is Asplund (weak Asplund). Again, as above, each space $F_n=(E/ker(h_n), \overline{h_n})$ is a Banach space isomorphic to the quotient space $E/ker(h_n)\approx E_{n}.$ We know also that each quotient map $$Q_n: E\rightarrow F_n (\approx E/ker(h_n))$$ is  bound covering again by \cite[Theorem 1]{Minarro}. Since $E$ is a Fr\'echet lcs, we apply Theorem \ref{factor} (ii) to conclude that each  $E_n$ is Asplund (weak Asplund).
\end{proof}

Since a quojection $E=s-proj((E_n)_n, (P_{n})_n)$ is reflexive if and only if each Banach space $E_{n}$ is reflexive, \cite[Observation 8.4.31]{PB}, the previous Theorem \ref{fifth} and Theorem \ref{separable-dual} imply
\begin{corollary}
A reflexive quojection $E=s-proj((E_n)_n, (P_{n})_n)$ is Asplund.
\end{corollary}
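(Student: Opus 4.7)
The statement follows by chaining three results already in the paper, so the strategy is entirely structural and the only real content is the elementary (and classical) observation that every reflexive Banach space is Asplund. I would proceed as follows.

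First, I would invoke the cited \cite[Observation 8.4.31]{PB}, which says that the quojection $E = s\text{-}proj((E_n)_n,(P_n)_n)$ is reflexive precisely when every defining Banach space $E_n$ is reflexive. Reflexivity of $E$ therefore reduces the problem to showing that each $E_n$ is an Asplund Banach space, because Theorem \ref{fifth} then promotes this to the Asplund property for the whole quojection $E$.

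The heart of the argument is therefore: every reflexive Banach space is Asplund. For this I would use clause (2) of Theorem \ref{separable-dual}, namely that a Banach space is Asplund iff every separable Banach subspace has separable dual. Let $Y \subset E_n$ be a separable closed subspace; since $E_n$ is reflexive, $Y$ is reflexive as well. It then suffices to recall the classical fact that a separable reflexive Banach space has separable dual: $Y^{**} = Y$ is separable, and the separability of a dual space implies the separability of the predual, applied to the predual $Y^*$ of $Y^{**}$. Hence $Y^*$ is separable, which verifies the criterion of Theorem \ref{separable-dual}(2), so $E_n$ is Asplund.

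Combining the three steps, each $E_n$ is Asplund, and then Theorem \ref{fifth} yields that $E$ itself is an Asplund lcs. The only non-obstacle subtlety worth stating explicitly in the write-up is the passage "separable reflexive $\Rightarrow$ separable dual"; this is the single place where something more than bookkeeping is needed, and it would be the main obstacle if one tried to prove it from scratch, but it is a standard Banach-space fact (or alternatively one can cite that the dual of a reflexive Banach space has the Radon--Nikod\'ym Property and apply clause (5) of Theorem \ref{separable-dual}). No new argument is required beyond this assembly of existing results.
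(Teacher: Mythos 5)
Your proposal is correct and follows exactly the paper's route: reduce via \cite[Observation 8.4.31]{PB} to the reflexivity of each $E_n$, observe that reflexive Banach spaces are Asplund by Theorem \ref{separable-dual}, and conclude with Theorem \ref{fifth}. The paper merely cites Theorem \ref{separable-dual} without specifying a clause, whereas you spell out the standard justification (separable reflexive subspaces have separable duals, or alternatively the Radon--Nikod\'ym Property of the dual), which is a harmless elaboration of the same argument.
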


Recall  that for a lcs $E$ {\it the strong dual} $(E',\beta(E',E))$  of $E$ is the dual $E'$ endowed with the  locally convex topology $\beta(E',E)$ of the uniform convergence on bounded sets of the space $E$. It is also known that every separable quojection is a quotient of the space $\ell_{1}^{\omega}$ by a quojection subspace, see \cite{Metafune}.
It is known that a separable Banach space $E$  is Asplund if and only if the  dual $E'$ is separable, see Theorem \ref{separable-dual}.
We prove the following variant for Fr\'echet lcs.
\begin{theorem}\label{six}
Let $E$ be a  quojection. Then  $(E',\beta(E',E))$  is separable if and only if $E$ is a separable Asplund space.
\end{theorem}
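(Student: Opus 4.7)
The plan is to identify $E_\beta'$ as the increasing union of the Banach duals $E_n'$ with compatible topologies, and then transfer separability step by step.

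I would first record the structural description $E' = \bigcup_n P_n^*(E_n')$, where $P_n^*\colon E_n' \to E'$ is the injective adjoint of $P_n$. Since $\{h_n = \|P_n(\cdot)\|_{E_n}\}_n$ is a fundamental sequence of continuous seminorms on the quojection $E$, any $\xi \in E'$ is dominated by $Ch_n$ for some $n,C$, hence vanishes on $\ker P_n$ and factors as $\xi = \eta\circ P_n$ with $\eta \in E_n'$. Using the bound-covering property of each $P_n$ established in the proof of Theorem~\ref{fifth}, I would then verify that the subspace strong topology on each $P_n^*(E_n')$ coincides with the Banach norm topology of $E_n'$: for a bounded $A\subset E$ with $B_{E_n}\subset P_n(A)$ and $\xi = P_n^*(\eta)$, one has $\sup_{x\in A}|\xi(x)| \geq \|\eta\|_{E_n'}$, while the reverse inequality follows from $P_n(A)$ being norm-bounded in $E_n$ for every bounded $A\subset E$.

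For the direction $E$ separable Asplund $\Rightarrow E_\beta'$ separable: each Banach quotient $E_n = E/\ker P_n$ is separable as a continuous image of separable $E$, and Asplund by Theorem~\ref{fifth}, hence by Theorem~\ref{separable-dual} each $E_n'$ is norm-separable. Pick countable norm-dense $D_n\subset E_n'$ and let $D = \bigcup_n P_n^*(D_n)$, a countable subset of $E'$. Given $\xi = P_n^*(\eta)\in E'$, a bounded $B\subset E$, and $\epsilon>0$, any $\delta\in D_n$ with $\|\eta-\delta\|_{E_n'}<\epsilon\bigl(1+\sup_{x\in B}\|P_n x\|_{E_n}\bigr)^{-1}$ yields $\sup_{x\in B}|\xi(x)-P_n^*(\delta)(x)|<\epsilon$, so $D$ is $\beta(E',E)$-dense in $E_\beta'$.

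For the converse $E_\beta'$ separable $\Rightarrow E$ separable Asplund: the key is to show each $E_n'$ is norm-separable. Granted this, Theorem~\ref{separable-dual} gives each $E_n$ Asplund and separable (using the classical fact that a Banach space with separable dual is itself separable), Theorem~\ref{fifth} yields $E$ Asplund, and $E$ is separable as a closed subspace of the separable metric space $\prod_n E_n$. To obtain norm-separability of $E_n'$, I would invoke the $(DF)$-space structure of $E_\beta'$: the polars $B_n = P_n^*(B_{E_n'})$ of the neighborhoods $\{h_n\leq 1\}$ form a fundamental sequence of bounded sets in $E_\beta'$, each metrizable in the subspace topology with the norm of $B_{E_n'}$ by the preliminary step. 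Separability of a $(DF)$-space descends to each of its bounded sets in such a fundamental sequence, giving $B_n$ norm-separable and hence $E_n' = \bigcup_k kB_n$ norm-separable. The main obstacle I foresee is precisely this separability descent in the converse direction: transferring separability from the non-metrizable $(DF)$-space $E_\beta'$ down to the individual Banach steps $E_n'$, since closed subspaces of separable topological vector spaces need not be separable in general. The resolution exploits both the metrizability of bounded sets in $(DF)$-spaces and the concrete structure of $E_\beta'$ as a regular $(LB)$-space with the $B_n$ as its natural fundamental sequence of bounded sets.
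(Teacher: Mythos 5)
Your proposal is correct in substance and follows the same skeleton as the paper's proof: both directions reduce to norm-separability of the Banach duals $E_n'$, combined with Theorem \ref{separable-dual} and Theorem \ref{fifth}. There are two genuine differences. First, where you reconstruct by hand (via the bound-covering property from the proof of Theorem \ref{fifth}) the fact that each $P_n^*(E_n')$ carries, as a subspace of $(E',\beta(E',E))$, exactly the Banach norm topology of $E_n'$, the paper simply cites \cite[Proposition 8.4.30]{PB}, which identifies $(E',\beta(E',E))$ with the \emph{strict} inductive limit of the sequence $(E_n')_n$; your computation is a correct proof of the same identification and makes the forward direction (union of images of countable norm-dense sets) self-contained. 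Second, for the separability of $E$ in the converse direction the paper invokes \cite[Proposition 5.2]{GKKM} ($(E',\beta(E',E))$ separable implies $E$ is weakly an $\aleph_0$-space, hence separable), whereas you deduce separability of each $E_n$ from norm-separability of $E_n'$ and then use that $E$ is a closed subspace of the separable metrizable product $\prod_n E_n$; your route is more elementary and equally valid.

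One step deserves a sharper justification. In the converse direction you rest the key descent on the principle that ``separability of a $(DF)$-space descends to each of its bounded sets in such a fundamental sequence.'' As stated, this cannot be derived merely from metrizability of the bounded sets: a metrizable \emph{subset} of a separable lcs need not be separable (an uncountable set of unit vectors in $\R^{\mathfrak{c}}$ is discrete, hence metrizable, and non-separable inside the separable space $\R^{\mathfrak{c}}$). What saves you is that $B_n$ sits inside the metrizable (indeed normed) \emph{vector subspace} $P_n^*(E_n')$, and the correct quotable fact --- the one the paper uses --- is \cite[Corollary 2.5.5]{PB}: every metrizable vector subspace of a separable lcs is separable. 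Applied directly to $P_n^*(E_n')$, whose subspace topology you have already identified with the norm topology of $E_n'$, it yields norm-separability of $E_n'$ without any appeal to $(DF)$-theory. With that substitution your argument is complete.
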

\begin{proof}
Assume $E$ is Asplund and separable. Since  $E=s-proj((E_n)_n, (P_{n})_n)$, and $E$ is separable, the space $E_n$ is separable as well for each $n\in\omega$. Now we use the  argument from the proof of the previous  Theorem \ref{fifth}  to derive that each $E_n$ is Asplund. Hence the dual $E_n'$ with the dual norm topology is separable in view of Theorem \ref{separable-dual}. Since $E$ is the strict projective limit space of Banach spaces $E_{n}$, we apply \cite[Proposition 8.4.30]{PB} to show that the strong dual $(E',\beta(E',E))$ of $E$ is the strict inductive limit of the sequence of separable Banach spaces $E'_{n}$.  Consequently, the space $(E',\beta(E',E))$ is separable as claimed.

Conversely, assume  $(E',\beta(E',E))$ is separable. Then, as $(E',\beta(E',E))$  is the strict inductive limit of the sequence of  Banach spaces $E'_{n}$, each  $E'_{n}$ is a topological subspace of $(E',\beta(E',E))$. Since every metrizable vector subspace of a separable lcs is separable, see \cite[Corollary 2.5.5]{PB},  in the relative topology of $(E',\beta(E',E))$ each $E_{n}'$ is separable. Hence each $E_n$ is Asplund again by Theorem \ref{separable-dual}. Now it is enough to apply Theorem \ref{fifth} to derive that  $E$ is Asplund. The separability of the Fr\'echet lcs $E$ follows from the fact that if $(E',\beta(E',E))$ is separable, the space $E$ is weakly an $\aleph_0$-space, hence $E$ is separable,  see  \cite[Proposition 5.2]{GKKM}.
\end{proof}
It is known that  the Asplund property in the class of Banach spaces is inherited by closed subspaces, see \cite{Phelps} or  \cite{Fabian1}. Note that Corollary \ref{closed} shows that in general closed subspaces of Asplund lcs spaces need not be Asplund. The following positive assertion is another main result of our paper.
\begin{theorem}\label{seven}
Let $E$ be a quojection which is Asplund. If $F$ is a closed vector subspace of $E$ which is a quojection, then $F$ is Asplund.
\end{theorem}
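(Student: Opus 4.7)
My plan is to exhibit $F$ as a strict projective limit of Asplund Banach spaces and then invoke Theorem \ref{fifth} to conclude that $F$ is Asplund. First I would apply the easy direction of Theorem \ref{fifth} to the hypothesis on $E$: since $E=s\text{-}\mathrm{proj}((E_n)_n,(P_n)_n)$ is Asplund, each defining Banach space $E_n$ is Asplund. Setting $p_n(x):=\|P_n(x)\|_{E_n}$, the family $(p_n)_n$ is a defining sequence of continuous seminorms on $E$, and the restrictions $q_n:=p_n|_F$ form a defining sequence of continuous seminorms on the closed subspace $F$ equipped with its inherited Fr\'echet topology. Because $F$ is itself a quojection, \cite[Proposition 8.4.33]{PB} ensures that each $F_n:=F/\ker(q_n)$, endowed with the quotient norm $\overline{q_n}$, is a Banach space.

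The key step is to analyse the natural map $\psi_n:F_n\to E_n$ given by $\psi_n([x])=P_n(x)$. Since $\ker(q_n)=F\cap \ker(P_n)$, this map is well-defined, linear and injective, and the identity $\overline{q_n}([x])=p_n(x)=\|P_n(x)\|_{E_n}$ shows that it is isometric. Completeness of $F_n$ then forces the image $\psi_n(F_n)=P_n(F)$ to be closed in $E_n$, so $F_n$ is isometrically isomorphic to a closed subspace of the Asplund Banach space $E_n$. By the closed-subspace stability of Asplundness for Banach spaces (Theorem \ref{separable-dual}), each $F_n$ is Asplund.

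Finally, since $(q_n)_n$ is a defining sequence on the Fr\'echet space $F$ and each quotient map $\pi_n:F\to F_n$ is surjective, the representation $F=s\text{-}\mathrm{proj}((F_n)_n,(\pi_n)_n)$ realises $F$ as a strict projective limit of the Asplund Banach spaces $F_n$; applying Theorem \ref{fifth} to $F$ then yields that $F$ is Asplund. The main obstacle is not a conceptual one but a careful bookkeeping matter: verifying the isometric identification $F_n\cong P_n(F)\subseteq E_n$ (so that closed-subspace inheritance of Asplundness in the Banach category can legitimately be invoked) and confirming that the induced projective representation of $F$ is genuinely strict with Banach factors, which is precisely where the quojection hypothesis on $F$ is used in an essential way.
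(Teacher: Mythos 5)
Your proposal is correct and follows essentially the same route as the paper: both arguments identify each local Banach space $F/\ker(p_n|_F)$ of $F$ (Banach because $F$ is a quojection) isometrically with a closed subspace of the Asplund Banach space $E_n\cong E/\ker(p_n)$, invoke the closed-subspace stability of Asplundness from Theorem \ref{separable-dual}, and then reassemble $F$ from these Asplund factors. The only cosmetic difference is that you package the final step by applying Theorem \ref{fifth} to the strict projective representation of $F$, whereas the paper invokes Theorem \ref{factor} directly; since Theorem \ref{fifth} is itself proved via Theorem \ref{factor}, this is the same argument.
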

\begin{proof}
Since $E$ is a quojection,  $E_p=(E/ker{p},\overline{p})$,  $p\in\mathcal{F}(E)$, is a Banach space isomorphic to  $E/ker(p)$, where $\mathcal{F}(E)$ is a countable family of continuous seminorms on $E$ generating the original  Fr\'echet  locally convex topology of $E$, and by  Theorem \ref{factor} we know that each Banach space $E_p$ is Asplund (recall that the quotient map from $E$ onto $E_p$ is bound covering). The family $\{p|F: p\in\mathcal{F}(E)\}$ forms a countable family of continuous seminorms generating the Fr\'echet locally convex topology of the subspace $F$ of $E$.
Note  that $F/ker(p|F)\subset E/ker(p)$ (algebraically and topologically) for $p\in\mathcal{F}(E)$. Since the subspace $F$ of $E$ is a quojection subspace,  each quotient space $F/ker(p|F)$ is a  Banach space (which is isomorphic to  $(F/ker(p|F), \overline{p|F})$). Hence
$F/ker(p|F)$ is a Banach subspace of the Asplund space $E/ker(p)$, so by Theorem \ref{separable-dual} one gets that $F/ker(p|F)$ is Asplund for each $p\in\mathcal{F}(E)$. Applying again Theorem \ref{factor} one gets that $F$ is Asplund.
\end{proof}
Last Theorem \ref{seven} may suggest the following
\begin{problem}\label{final}
Does there exist a quojection Fr\'echet lcs $E$  which is Asplund not being  a countable product of Banach spaces and yet $E$ contains no infinite-dimensional Banach subspaces?
\end{problem}
\begin{remark}
There exists a quojection Fr\'echet lcs $E$ (constructed by Ostrovskii  \cite[Theorem]{Ostrovskii}) which is not a countable product of Banach spaces and yet contains no infinite-dimensional Banach subspace. Ostrovskii observed that to prove his Theorem it was enough to construct a sequence $(E_{n})_n$ of Banach spaces such that for each $n\in\omega$ there exists a surjective continuous linear map $P_{n}:E_{n+1}\rightarrow E_{n}$ and for each $j,k\in\omega$, $j\neq k$, the spaces $E_{j}$ and $E_{k}$ do not have isomorphic infinite-dimensional subspaces. Therefore, this observation  combined with Theorem \ref{fifth} implies that to solve (positively) Problem \ref{final}, it is sufficient to have that this sequence $(E_n)_n$ will consist of Banach spaces which are Asplund spaces.
\end{remark}
\begin{problem}
Does there exist an infinite-dimensional  Fr\'echet lcs $E$ which is Asplund  that contains a Banach subspace $F$ which is not Asplund?
\end{problem}

\vspace{0.3in}

\centerline{{\bf Statements and Declarations}}

The authors declare that no  support was received during the preparation of
this manuscript. The authors have no relevant financial or non-financial interests
to disclose.

\centerline{{\bf Conflict of interest}} 

There is no conflict of interest.

\centerline{{\bf Data availability}}

Data sharing not applicable to this article as no datasets were generated or
analyzed during the current study.
\vspace{0.3in}

\begin{thebibliography}{99}

\bibitem{Arch_book} A. V. Arkhangel'skii,
\newblock \textit{Topological Function Spaces}, Kluwer Academic Publishers, Dordrecht, 1992.

\bibitem{Asplund} E. Asplund, 
\newblock \textit{Fr\'echet differentiability of convex functions},
Acta Math. \textbf{121} (1968), 31--47.

\bibitem{BG} T. Banakh, S. Gabriyelyan,
\newblock \textit{On free locally convex spaces},
 Filomat \textbf{36:18} (2022), 6393--6401.

\bibitem{Bellenot}  S. Bellenot, E. Dubinsky,
 \newblock \textit{Fr\'echet spaces with nuclear K\"othe quotients},
 Trans. Amer. Math. Soc. \textbf{273} (1982), 579--594.

\bibitem{Bierstedt} K. Bierstedt,
 \newblock \textit{An introduction to locally convex inductive limits}, 
Functional analysis and its applications (Nice, 1986), 35--133,
World Scientific Publishing Co., Singapore, 1988.

\bibitem{Cias} T. Cia\'s, 
 \newblock \textit{On the algebra of smooth operators}, 
Studia Math. \textbf{218} (2013), 145--166.

\bibitem{Coban} M. \v Coban and P. Kenderov,
\newblock \textit{Dense G\^ ateaux differentiability of the $sup$-norm in $C(T)$
and the topological properties of $T$},
 C.R. Acad. Bulgare Sci. \textbf{38} (1985), 1603--1604.

\bibitem{Corbacho} E. Corbacho, A. Plichko, V. Tarieladze,
\newblock \textit{A one-sided version of Alexiewicz-Orlicz's differentiability theorem},
Rev. R. Acad. Cienc. Exactas F\'is. Nat. Ser. A Mat. RACSAM \textbf{99} (2005), 167--181.

\bibitem{Diestel} J. Diestel,
\newblock \textit{Sequences and Series in Banach Spaces}, Springer, 1984.

\bibitem{Domanski} P. Doma\'nski, A. Orty\'nski,
\newblock \textit{Complemented subspaces of products of Banach spaces},
 Trans. Amer. Math. Soc. \textbf{316} (1989), 215--231.

\bibitem{Doucha} M. C\'uth, M. Doucha, P. Wojtaszczyk,
\newblock \textit{On the structure of Lipschitz-free spaces},
 Proc. Amer. Math. Soc. \textbf{144} (2016),  3833--3846.

\bibitem{Fabian} M. Fabian,
\newblock \textit{G\^ ateaux Differentiability of Convex Functions and Topology. Weak Asplund Spaces},
 Canadian Math. Soc. Series of  Monographs and Advanced Texts,
John Wiley $\&$ Sons, 1997.

\bibitem{Fabian1} M. Fabian, P. Habala, P. H\'ajek, V. Montesinos, V. Zizler,
\newblock \textit{Banach space theory. The basis for linear and nonlinear analysis}, 
CMS Books Math./ Ouvrages Math. SMC, Springer, New York, 2011. 

\bibitem{Eyland_Sharp1} R. Eyland and B. Sharp,
\newblock \textit{A factor theorem for locally convex differentiability spaces},
 Bull. Austral. Math. Soc. \textbf{43} (1991), 101--113.

\bibitem{Eyland_Sharp2} R. Eyland and B. Sharp,
\newblock \textit{Convex spaces: Classification by differentiable convex functions},
 Bull. Austral. Math. Soc. \textbf{46} (1992), 127--138.

\bibitem{Eyland_Sharp3} R. Eyland and B. Sharp,
\newblock \textit{The Fr\'echet differentiability of convex functions on $C(S)$},
Miniconference on probability and analysis,
Proc. Centre Math. Appl. Austral. Nat. Univ. \textbf{29} (1992), 73--91.

\bibitem{FK} J. C. Ferrando,  J.  K\c akol, 
 \newblock \textit{Metrizable bounded sets in $C(X)$ spaces and distinguished $C_p(X)$ spaces},
 J. Convex. Anal. \textbf{26} (2019), 1337--1346.

\bibitem{FKS} J. C. Ferrando, J. K\c akol, S. A. Saxon,
\newblock \textit{The dual of the locally convex space $C_{p}(X)$},
 Funct. Approx. Comment. Math. \textbf{50} (2014), 1--11.


\bibitem{GKKM} S. Gabriyelyan, J. K\c akol, W.  Kubi\'s, W.  Marciszewski, 
\newblock \textit{Networks for the weak topology of Banach and Fr\'echet spaces}, 
J. Math. Anal. Appl. \textbf{432} (2015), 1183--1199.



\bibitem{Gabriyelyan2} S. Gabriyelyan, 
\newblock \textit{Limited type subsets of locally convex spaces}, https://arxiv.org/abs/2403.02016

\bibitem{Hajek} P. H\'ajek, M. Novotn\'y,
\newblock \textit{Some remarks on the structure of Lipschitz-free spaces}, Bull Belgian Math. Soc. Simon Stevin 
\textbf{24} (2016), 283--304.

\bibitem{Ja} H. Jarchow,
\newblock \textit{Locally Convex Spaces}, B. G. Teubner, Stuttgart, 1981.





\bibitem{Kakol-Leiderman-2} J. K\c akol, A. Leiderman,
\newblock \textit{On the product of weak Asplund locally convex spaces}, 2024 (submitted for publication),
http://arxiv.org/abs/2412.10221

\bibitem{KKP} J. K{\c{a}}kol, W.  Kubi\'{s}, M. L\'{o}pez-Pellicer,
\newblock \textit{Descriptive Topology in Selected Topics of Functional Analysis},
Springer, Developments in Math. \textbf{24}, New York Dordrecht Heidelberg, 2011.

\bibitem{KKL} J. K\c akol, O. Kurka, A. Leiderman,
\newblock \textit{On Asplund Spaces $C_k(X)$ and $w^{*}$-Binormality},
 Results Math. \textbf{78:203} (2023).

\bibitem{kakol-saxon} J. K{\c a}kol, S. A. Saxon,
\newblock \textit{Montel (DF)-spaces, sequential (LM)-spaces and the strongest locally convex topology}, 
 J. London Math. Soc. \textbf{66} (2002), 388--406.

\bibitem{Kakol-Saxon-Todd} J. K\c akol, S. A. Saxon, A. T. Todd,
\newblock \textit{Weak barrelledness for $C(X)$ spaces}, J. Math. Anal. Appl. \textbf{297} (2004), 495--505.

\bibitem{KM} M. Komisarchik, M. Megrelishvili, 
\newblock \textit{Tameness and Rosenthal type locally convex spaces}, 
Rev. R. Acad. Cienc. Exactas Fis. Nat. Ser. A Mat. RACSAM. \textbf{117:113} (2023).

\bibitem{Kothe} G. K\"othe, 
\newblock \textit{Topological Vector Spaces I}, Springer-Verlag, Berlin-Heidelberg-New York, 1983.

\bibitem{Larman} D. G. Larman and R. R. Phelps,
\newblock \textit{G\^ ateaux differentiability of convex functions on Banach spaces},
J. London Math. Soc. \textbf{20} (1979), 115--127.

\bibitem{LU} A. Leiderman, V. Uspenskij,
\newblock \textit{Is the free locally convex space $L(X)$ nuclear?}
Mediterr. J. Math. \textbf{19:241} (2022).

\bibitem{Mar} A.~A.~Markov, 
\newblock \textit{On free topological groups}, Dokl. Akad. Nauk SSSR \textbf{31} (1941), 299--301.

\bibitem{Mazur}  S. Mazur, 
\newblock \textit{\"Uber konvexe Mengen in linearen normierten R\"aumen}, Studia Math. \textbf{4} (1933), 70--84.


\bibitem{Meise-Vogt} R. Meise, D. Vogt, 
\newblock \textit{Introduction to Functional Analysis}, Oxford Science Publications, 1997.

\bibitem{Metafune} G. Metafune, V. B. Moscatelli,  
\newblock \textit{Quojection  and prequojections}, Advances in the Theory of Fr\'echet Spaces,
  edited by T. Terzioglu, Series C: Mathematical and Physical Sciences - \textbf{287} (1988), 235--254.

\bibitem{Minarro} M. A. Minarro, 
\newblock \textit{A  characterization of quasinormable K\"othe sequence spaces}, 
 Proc. Amer. Math. Soc. \textbf{123} (1995), 1207--1217.



\bibitem{Moors-2} W. B. Moors and S. Somasundaram,
\newblock \textit{A G\^ ateaux differentiability space that is not weak Asplund},
 Proc. Amer. Math. Soc. \textbf{134} (2006), 2745--2754.

\bibitem{Namioka} I. Namioka, R. R. Phelps,
\newblock \textit{Banach spaces which are Asplund spaces}, Duke Math. J. \textbf{42} (1975), 735--750.

\bibitem{Na-Saxon} P. P. Narayanaswami, S. A. Saxon, 
\newblock \textit{(LF)-spaces, Quasi-Baire spaces and the strongest locally convex topology}, Math. Ann.
 \textbf{274} (1986), 627--641.

\bibitem{nyikos} P. Nyikos,
\newblock \textit{Metrizability and the Fr\'echet-Urysohn property in topological groups}, 
Proc. Amer. Math. Soc. \textbf{83} (1981), 793--801.

\bibitem{Ostrovskii} M. L. Ostrovskii,
\newblock \textit{Quojections without Banach subspaces}, Bull. Polish Acad. Sc. \textbf{44} (1996),  143--146.

\bibitem{PB} P. P\'{e}rez Carreras, J. Bonet,
\newblock \textit{Barrelled Locally Convex Spaces}, North-Holland Mathematics Studies \textbf{131}, Amsterdam, 1987.

\bibitem{Phelps} R. R. Phelps,
\newblock \textit{Convex Functions, Monotone Operators and Differentiability},
Lecture Notes in Math. 1364, Springer-Verlag, 1989.

\bibitem{Raikov} D. A. Raikov,
\newblock \textit{Free locally convex spaces for uniform spaces}, Math. Sb. \textbf{63} (1964), 582--590.

\bibitem{Saxon-Robertson} W. J. Robertson, S. A. Saxon, A. P. Robertson, 
\newblock \textit{Barrelled spaces and dense vector subspaces}, Bull.  Austral. Math. Soc. \textbf{37} (1988),  383--388.

\bibitem{Sharp} B. Sharp,
\newblock \textit{The differentiability of convex functions on topological linear spaces},
 Bull. Austral. Math. Soc. \textbf{42} (1990), 201--213.

\bibitem{Schmets} J. Schmets,
\newblock \textit{Espaces de Functions Continues}, Lecture Notes in Math. \textbf{519}, 1976.

\bibitem{Stegall1}  C. Stegall,
\newblock \textit{A class of topological spaces and differentiability}, Vorlesungen aus dem Fachbereich
Mathematik der Universit¨at Essen \textbf{10} (1983), 63--77.




\bibitem{Uspenskii} V. V. Uspenskij,
\newblock \textit{Free topological groups of metrizable spaces}, 
 Math. USSR-Izvestiya \textbf{37} (1991),  657--680.

\bibitem{Usp08} V. V. Uspenskij,
\newblock \textit{Unitary representability of free abelian topological groups},
 Applied General Topology \textbf{9} (2008), No. 2, 197--204.


\bibitem{Zheng} Xi Yin Zheng, Kung Fu Ng, 
\newblock \textit{Differentiability of convex functions on a locally convex topological vector space},
 J. Convex Anal.  \textbf{26} (2019), 761--772.

\bibitem{Lee} C. Wu, X. Wang, L. Cheng, E. S. Lee,
 \newblock \textit{On the Asplund property of locally convex spaces}, 
J. Math. Anal. Appl. \textbf{204} (1996), 432--443.
\end{thebibliography}
\end{document}